\documentclass[a4paper,12pt]{amsart}
\usepackage{amsfonts,amsmath,amssymb,amsthm}
\usepackage{latexsym}
\usepackage{eucal}
\usepackage[dvips]{graphics}
\usepackage{mathabx}
\usepackage[english]{babel}
\usepackage{epsfig}
\usepackage{enumerate}
%\usepackage[runninge]{lineno}
%\linenumbers
%\usepackage{refcheck}
%\usepackage{float}
%\usepackage[usenames]{color}
%\pagestyle{empty}
%\usepackage[notcite,notref]{showkeys}
\usepackage[colorlinks=true, linkcolor=red, citecolor=blue]{hyperref}
%\usepackage[pagebackref=true, bookmarksopen=true, colorlinks=true, linkcolor=red, citecolor=blue]{hyperref}
%\usepackage{showkeys}
%
% THEOREM Environments (Examples)-----------------------------------------
%%Symbols
\newcommand{\nwc}{\newcommand}
\nwc\eps{\varepsilon}

%
% THEOREM Environments (Examples)-----------------------------------------
%%%%%%%%%%%%%%%%%%%%%%

\newcommand\I{{\mathcal{I}}}

\newcommand\R{\mathbb{R}}

\nwc\EN{{\mathcal{E}_1}}

\newcommand\ep{\epsilon}

\nwc{\dx}{\partial_x}
\nwc{\dy}{\partial_y}

%%%%%%%%%%%%%%%%%%%%%%%%%%

\newcommand\J{{\mathcal{J}}}

\nwc{\hamone}{{\mathcal{H}}}
%%%%%%%%%%%%%
\nwc{\Imu}{\Sigma}
\nwc{\Iem}{\I}
\nwc{\Gep}{G}
\nwc{\wt}{\widetilde}
\nwc{\IF}{\mathcal F}

%%%%%%%%%%%%%%%%%%%%%%%

\newtheorem{thm}{Theorem}[section]
 
 \newtheorem{lem}[thm]{Lemma}

 \newtheorem{defn}[thm]{Definition}
 \newtheorem{claim}{Claim}
 \newtheorem{rem}[thm]{Remark}

 \numberwithin{equation}{section}
\setlength{\textwidth}{16cm} \setlength{\textheight}{20 cm}
\addtolength{\oddsidemargin}{-1.5cm}
\addtolength{\evensidemargin}{-1.5cm}
\makeatletter
\@namedef{subjclassname@2010}{\textup{2020} Mathematics Subject Classification}
\makeatother

\begin{document}

\title[stability for generalized $abcd$-Boussinesq system]
 {Stability of solitary waves for generalized $abcd$-Boussinesq system: The Hamiltonian case}

%----------Author 1
\author[Capistrano-Filho]{Roberto de A.  Capistrano--Filho*}
\address{
Departamento de Matem\'atica, Universidade Federal de Pernambuco\\
Cidade Universit\'aria, 50740-545, Recife (PE), Brazil}
\email{roberto.capistranofilho@ufpe.br}

%\email{roberto.capistranofilho@ufpe.br}

%\thanks{This work was completed with the support of our
%\TeX-pert.}
%----------Author 2

\author[Quintero]{José Raul Quintero}
\address{
Department of Mathematics, Universidad del Valle\\
Calle 13, 100-00, Cali, Colombia}
\email{jose.quintero@correounivalle.edu.co}

%\email{jose.quintero@correounivalle.edu.co}

%----------Author 3

\author[Sun]{Shu-Ming Sun}
\address{
Department of Mathematics, Virginia Tech\\
Blacksburg (VA) 24061, USA}
\email{sun@math.vt.edu}

\date{\today}
\thanks{*Corresponding author: roberto.capistranofilho@ufpe.br}
\thanks{Capistrano-Filho was supported by CNPq grants numbers 307808/2021-1, 401003/2022-1, and 200386/2022-0, CAPES  grant number 88887.520204/2020-00, MathAmSud 21-Math-03, and CAPES/COFECUB grant number 88887.879175/2023-00. Quintero was supported by Universidad del Valle (project CI 71231) and by Minciencias-Colombia under the project MathAmSud 21-Math-03. Sun was supported by a grant from the Simons Foundation (712822, SMS)}

%----------classification, keywords, date
\subjclass[2010]{35B35, 76B25, 35Q35}

\keywords{Solitary waves, Stability, Generalized $abcd$-Boussinesq system, GSS approach}

%\date{\today}
%----------additions
%\dedicatory{xxx]
%%% ----------------------------------------------------------------------

\numberwithin{equation}{section}

\begin{abstract}
The $abcd$-Boussinesq system is a model of two equations that can describe the propagation of small-amplitude long waves in both directions in the water of finite depth. Considering the Hamiltonian regimes, where the parameters $b$ and $d$ in the system satisfy $b=d>0$, small solutions in the energy space are globally defined. Then, a variational approach is applied to establish the existence and nonlinear stability of the set of solitary-wave solutions for the generalized $abcb$-Boussinesq system. The main point of the analysis is to show that the traveling-wave solutions of the generalized $abcb$-Boussinesq system converge to nontrivial solitary-wave solutions of the generalized Korteweg-de Vries equation. Moreover, if $p$ is the exponent of the nonlinear terms for the generalized $abcb$-Boussinesq system, then the nonlinear stability of the set of solitary-waves is obtained for any $p$ with  $ 1 \leq p < p_0$ where $p_0 $ is strictly larger than $4$, while it has been known that the critical exponent for the stability of solitary waves of the generalized KdV equations is equal to $ 4$.

\end{abstract}

%%% ----------------------------------------------------------------------
\maketitle
%\tableofcontents
%%% ----------------------------------------------------------------------

\allowdisplaybreaks

\section{Introduction}
\subsection{$abcd$-Boussinesq model}
Boussinesq \cite{boussinesq1} introduced several nonlinear partial differential equations to explain certain physical observations concerning the water waves, where the surface tension has been neglected, e.g., the emergence and stability of solitary waves. Unfortunately, several systems derived by Boussinesq were shown to be ill-posed, and thus, there was a need to propose other systems with better mathematical properties.  In that direction, the four-parameter family of the Boussinesq system
\begin{equation}
\left\{
\begin{array}
[c]{l}%
\eta_{t}+\partial_xu+\partial_x(   \eta u) +a\partial_{xxx}u-b\partial_{xx}\eta_{t}=0\text{,}\\
u_{t}+\partial_x\eta+u\partial_xu+c\partial_{xxx}\eta-d\partial_{xx}u_{t}=0,
\end{array}
\right.  \label{int_29e}%
\end{equation}
was introduced by Bona, Chen, and Saut \cite{BCS02}  to describe the motion of small-amplitude long waves on the surface of an ideal fluid of finite depth under gravity and in situations where the motion is sensibly two-dimensional. In \eqref{int_29e}, $\eta$ is the elevation of the fluid surface from the equilibrium position, and $u$ is the horizontal velocity at a certain height in the flow. Initially, the constants $a, b, c, d$ must satisfy only the following relation
$$
a+b+c+d=\frac{1}{3}-\sigma
$$
where $\sigma \geq 0$ is the surface tension coefficient of the fluid. As reported in \cite{BCS02}, when $\sigma$ is zero, parameters $a, b, c, d$ must satisfy the relations
\begin{equation}\label{abcd}
a+b=\frac{1}{2}\left(\theta^2-\frac{1}{3}\right)\quad  c+d=\frac{1}{2}\left(1-\theta^2\right) \geq 0, \quad
a+b+c+d=\frac{1}{3},
\end{equation}
where $\theta \in[0,1]$. In addition, $a, b, c, d$ can be rewritten in the form
\begin{equation}\label{abcd_1}
\begin{split}
a & =\frac{1}{2}\left(\theta^2-\frac{1}{3}\right) \nu, \quad b  =\frac{1}{2}\left(\theta^2-\frac{1}{3}\right)(1-\nu), \\
c & =\frac{1}{2}\left(1-\theta^2\right) \mu, \quad d   =\frac{1}{2}\left(1-\theta^2\right)(1-\mu),
\end{split}
\end{equation}
with $\nu, \mu$ are suitable real parameters in the sense that \eqref{abcd_1} implies \eqref{abcd}. Depending on the choice of different real values for $\nu, \mu$ and $\theta \in[0,1]$, it is possible to deduce some classical systems, such as the classical Boussinesq system, Kaup system, Bona-Smith system, coupled Benjamin-Bona-Mahony system, coupled Korteweg-de Vries system, and coupled mixed Korteweg-de Vries-Benjamin-Bona-Mahony systems.

The authors in \cite{BCS04} studied the initial value problem for the system \eqref{int_29e}. The well-posedness on $\mathbb R$ was shown if and only if the parameters $a,b,c,d$ are in the following regimes
\begin{eqnarray*}
(\textrm{C}1)&& \ b=d>0,\ a\le 0,\ c< 0;\\
(\textrm{C}2)&& \ b,d\ge 0, \ a=c>0.
\end{eqnarray*}
Thus, observe that in the (C1) case, the system \eqref{int_29e} takes the form as
\begin{equation} \left\{
\begin{array}{ll}
\left(I-b\partial_x^2\right) \eta_t
+\partial_x u+a\partial_x^3 u
+\partial_x\left(\eta u \right)=0,& (x,t)\in\mathbb{R}\times\mathbb{R},
\\ \label{1bbl1}
\left(I- b\partial^2_x\right) u_t  + \eta_x
+c\partial_x^3\eta
+u\partial_x u=0,& (x,t)\in\mathbb{R}\times\mathbb{R},\\
\eta(x,0)=\eta_0(x), \quad u(x,0)=u_0(x), &x\in\mathbb{R}.
\end{array}\right.
\end{equation}
It is known that system \eqref{1bbl1} admits (big) solitary-wave solutions in certain regimes of the parameters involved in the system (for instance, see \cite{BaoChenLiu2015} and references therein for details). Moreover, when $b=d>0$, it was also shown in \cite{BCS04} that the system \eqref{1bbl1} is Hamiltonian and globally well-posed in the energy space $X= H^1(\mathbb{R}) \times H^1(\mathbb{R})$, at least for small data, thanks to the conservation of the energy
$$
\begin{aligned}
\mathcal{H}[\eta,u](t):= & \frac{1}{2} \int\left(-a\left(\partial_x u\right)^2-c\left(\partial_x \eta\right)^2+u^2+\eta^2+u^2 \eta\right)(t, x) d x.
\end{aligned}
$$

\subsection{Problem setting} Keeping the previous conservation law in mind, our goal is to investigate the existence and stability of some traveling-wave solutions for a more general nonlinear dispersive system associated with \eqref{1bbl1}, namely
\begin{equation} \left\{
\begin{array}{ll}
\left(I-b\partial_x^2\right) \eta_t
+\partial_x u+a\partial_x^3 u
+\partial_x\left(\eta u^p \right)=0,& (x,t)\in\mathbb{R}\times\mathbb{R},
\\ \label{1bbl}
\left(I- b\partial^2_x\right) u_t  + \partial _x \eta
+c\partial_x^3\eta
+\frac{1}{p+1}\partial_x (u^{p+1})=0,& (x,t)\in\mathbb{R}\times\mathbb{R},
\\
\eta(x,0)=\eta_0(x), \quad u(x,0)=u_0(x), &x\in\mathbb{R}.
\end{array}\right.
\end{equation}
Here, $\eta=\eta(x,t)$ and $u=u(x,t)$ are real-valued
functions,  $p> 0  $ is a rational constant of the form
\begin{align}
p = \frac{p_1}{p_2} \quad \mbox{with}\quad (p_1,\, p_2) = 1\quad \mbox{and}\quad  p_1, \, p_2 \ \mbox{ odd}, \label{assump_p}
\end{align}
and the parameters $a, b, c, d$ satisfy (C1). In the following, the system \eqref{1bbl} is called the generalized $abcb$-Boussinesq system since $b=d$ in \eqref{int_29e}\footnote{A robust analysis for the well-posedness problem of \eqref{1bbl} was made in \cite{BCS04} for the case $p=1$.}.

It is well understood that the general stability theory developed in \cite{GSS} is a powerful tool to prove the stability of solitary-wave solutions for abstract Hamiltonian systems. Taking it into account, roughly speaking, we are interested in the study of the following problem:

\vspace{0.2cm}

\noindent\textbf{Orbital Stability Problem:}\textit{ Let $\omega \in \mathbb{R}^+$ and $\varepsilon>0$ be given and $(\tilde \eta_\omega, \tilde u_\omega )$ be a traveling-wave solution of \eqref{1bbl} with traveling speed $\omega$. Is there $\delta(\varepsilon)>0$ such that for $(\eta_0,u_0)\in H^1(\mathbb{R})\times H^1(\mathbb{R})$ with
$$
\|(\eta_0,u_0)-(\tilde \eta_\omega,\tilde u_\omega)\|_{ X}<\delta(\varepsilon),
$$
there exists a unique global solution $(\eta(\cdot,t),u(\cdot,t))$ of the system \eqref{1bbl} such that
\begin{equation}\label{OSP}
\inf_{y\in\mathbb{R}}\|(\eta(\cdot,t),u(\cdot,t))-(\tilde \eta_\omega(\cdot+y),\tilde u_\omega(\cdot+y))\|_{X}<\varepsilon  \ \
\text{for all} \ \ t> 0 ?
\end{equation}
Here, we may let a set $ \tilde {\mathcal G}_\omega  = \{ (\tilde \eta_\omega (\cdot + y ) , \tilde u_\omega (\cdot + y ))\, |\, y \in \mathbb{R} \}$. Then, the orbital stability can be stated as the set stability: \eqref{OSP} is equivalent to $\mbox{dist} ( (\eta(\cdot,t),u(\cdot,t)), \tilde {\mathcal G}_\omega ) < \varepsilon$ for all $t > 0 $.}

\vspace{0.2cm}

To solve the previous problem, it is natural to use the existence of a Hamiltonian structure, as mentioned before. Thus, for our analysis of the stability, we consider the Hamiltonian structure\footnote{The Hamiltonian structure comes from the fact that $\mathcal{J}_{bb}$ defined in the following is skew-symmetric as pointed out in \cite[Section 4]{BCS04}.} for the generalized $abcb$-Boussinesq system \eqref{1bbl} given by
\begin{equation}\label{ham}
\mathcal{H}\begin{pmatrix}\eta \\ u
\end{pmatrix}=\frac{1}{2}\int_{\mathbb{R}}
\left(\eta^2- c (\partial_x \eta)^2+u^{2}- a (\partial_x u)^2+\frac{2}{p+1}\eta u^{p+1}\right)dx.
\end{equation}
Note that in this Hamiltonian regime, our system can be written as
\[
\begin{pmatrix}\eta_t  \\
u_t
\end{pmatrix}=\mathcal{J}_{bb}\mathcal{H}'\begin{pmatrix}\eta \\ \Phi \end{pmatrix}, \]
with
\[
\mathcal{J}_{bb}=\partial_x
\begin{pmatrix}0 &\left(I- b \partial^2_x\right)^{-1}\\ \left(I- b\partial^2_x\right)^{-1}&0
\end{pmatrix}.
\]
It is important to mention that if $(\eta(0),u(0))$ has an average zero, so does $(\eta(t),u(t))$ as long as the solution exists. Moreover, for a function $w\in L^2(\mathbb{R})$ having zero average on $\mathbb{R}$, we see that it is possible to define the operator $\partial_x^{-1}w$ as
\[
\partial_x^{-1}w(x) = \int_{-\infty}^x w(y)\,dy,
\]
in such a way that $\partial_x \partial_x^{-1} w=w$. On the other hand, there is a functional $\mathcal Q$ defined in $X$, known as the \textit{Charge}, which is conserved in time for classical solutions. This functional is given formally by \footnote{This holds by Noether's theorem \cite{Noe}.}
\begin{equation*}
\mathcal{Q}\begin{pmatrix}\eta \\ u \end{pmatrix}=-\frac12
\left<\mathcal{J}_{bb}^{-1}
\begin{pmatrix}\eta_x \\ u_x\end{pmatrix}, \begin{pmatrix}\eta \\ u \end{pmatrix}\right>=-\int_{\mathbb{R}}\big ( (I-b\partial_x^2)\eta\big )  u \,dx,
\end{equation*}
where $\left<\cdot,\cdot\right>$ denotes the standard $L^2$-inner product.

From this Hamiltonian structure, we have  that traveling waves of wave speed $\omega$ for the
generalized $abcb$-Boussinesq system \eqref{1bbl} correspond to stationary solutions of the modulated system
\[
\begin{pmatrix}\eta_t  \\
\Phi_t
\end{pmatrix}=\mathcal{J}_{bb}{\mathcal F_{\omega}}'\begin{pmatrix}\eta \\ \Phi \end{pmatrix},
\]
where
\begin{equation}\label{F_w}
\mathcal F_{\omega}(Y)=\mathcal H(Y)+\omega \mathcal Q(Y).
\end{equation}
In other words, they are the solutions to the system
\[
\mathcal H'(Y)+\omega\mathcal Q'(Y)=0.
\]
Now, let us provide some background on the stability issue.

\subsection{Historical background} Regarding the stability issue, Grillakis, Shatah, and Strauss \cite{GSS} gave a general framework to establish the stability of solitary waves for a class of abstract Hamiltonian systems, which will be called \textit{Grillakis-Shatah-Strauss (GSS) approach}. In this case, solitary-wave solutions $Y_\omega$  of the least energy are the minimum of a functional $\mathcal F_{\omega}$. In this approach, the analysis of the stability depends on the positiveness of the symmetric operator $ \mathcal F''_{\omega}(Y_{\omega})$ in a neighborhood of the solitary wave solution $Y_{\omega}$, and also  the strict convexity of the scalar function
$$
d_{1}(\omega)=\inf \{\mathcal F_{\omega}(Y): Y \in \mathcal M_{\omega}\},
$$
where $\mathcal M_{\omega}$ is a suitable set.

In this theory, one of the main tasks is to establish the positivity of $\mathcal F''_{\omega}(Y_{\omega})$. In one-dimensional spatial problems, the spectral analysis for the operator $ \mathcal F''_{\omega}(Y_{\omega})$ is reduced to studying the eigenvalues of an ordinary differential equation,  which becomes an ordinary differential equation with constant coefficients at $\pm\infty$ (for instance, see \cite{BSS} for more details). Based on the GGS approach, several works in the literature treat the stability of systems governed by partial differential equations.

For example, we can cite a series of works that show the stability of periodic waves for a dispersive system, such as a fifth-order KdV type equation, a nonlinear Klein–Gordon equation, a general class of nonlinear dispersive wave,  a fourth-order Schr\"odinger system, among others (see \cite{Natali3,Natali2,Angulo,Natali,Natali1} and the references therein for these cases). Additionally, there are recent results of stability/instability in models that arise in quantum field theory (for example, \cite{Palacios1,Palacios})\footnote{We mention that there is also interest in studying scattering and decay issues for the system \eqref{int_29e}. We suggest the nice articles \cite{KM2020,KMPP2019} and the reference therein.}.

Related to the $abcd$-Boussinesq model, several authors have studied this system. We mention first that, concerning explicit traveling-wave solutions, Chen \cite{Chen} has considered various cases for the $abcd$-Boussinesq system \eqref{int_29e}. She was able to write many traveling-wave solutions in the form $(\eta, u) =(\psi(x-\omega t), v(x-\omega t))$, depending on the constants $a,b,c$ and $d$. After that, adapting the positive operator theory of Krasnosell’skii \cite{Krasno1, Krasno2}, Bona and Chen \cite{BC2002} established the existence of traveling-wave solutions for the $abcd$ Boussinesq system \eqref{int_29e}, in the regime $$b, d>0,\ a, c\leq 0,\  |a|, |c| \leq \sqrt{bd}$$ and for $\omega>1$ such that
$$\omega^2> \max\left\{\frac{ac}{bd}, 1+ \frac{\sigma-\frac13}{b+d} \right\}.$$

More recently, stability issues have been treated in two works by Chen, Nguyen, and Sun.  In \cite{CNS2010}, the authors have shown that traveling-wave solutions of \eqref{int_29e} exist in the regime  $a + b + c +d < 0$, which corresponds to a large surface tension $\sigma>1/3$. In addition, they have also proven stability using techniques introduced earlier by Buffoni \cite{Buffoni} and Lions \cite{Lions,Lions2}.  Additionally, in \cite{CNS2011},  the authors considered the general case $b = d > 0$ and $a, c < 0,$ which, in particular, allows small surface tension cases. To be precise, they gave the existence of traveling-wave solutions in the presence of small propagation speeds, taking into account the coefficients satisfying
$$
a, c<0, \quad b=d, \quad|\omega|<\omega_0, \quad \omega_0:=\left\{\begin{array}{ll}
\min \{1, \frac{\sqrt{a c}}{b}\}, & b \neq 0, \\
1, & b=0 .
\end{array}\right.
$$

We also mention that considering a variation of \eqref{int_29e}, Hakkaev, Stanislavova, and Stefanov \cite{HSS2013}, showed the spectral stability of certain traveling-wave solutions for the Boussinesq ``abc" system, taking into advantage the explicit $\operatorname{sech}^2(x)$ like solutions of the form $(\eta, u) =(\psi(x-\omega t), v(x-\omega t))=(\psi, \text{const.}\psi)$, exhibited by Chen \cite{Chen}. In the article, they provided a complete rigorous characterization of the spectral stability in all cases for which $a=c<0, b>0$.

Finally, Loreno, Moraes, and Natali \cite{MLN2023} treated the stability of traveling-wave solutions for the $abcd$-Boussinesq model \eqref{int_29e}, considering the Hamiltonian regimes; however, in the periodic framework, which is completely different from our case.

Let us now briefly discuss the use of the GSS approach. To use this approach, in our work, the verification of the hypotheses of \cite[Theorem 3]{GSS} is difficult, since we do not have a close formula for traveling-wave solutions, making it almost impossible to compute $\mathcal F_{\omega}''(Y_{\omega})$ and $d''_{1}(\omega)$. However, we are still able to use the method by performing a direct approach to prove the stability of solitary-wave solutions of the system \eqref{1bbl}, using the characterization of $d_{1}(\omega)$ in terms of conservative quantities. This strategy was satisfactory in several cases, for example, as the pioneering work done by Shatah \cite{Shatah} in the case of the nonlinear Klein-Gordon equations, Bouard and Saut \cite{DBJC} for the KP equation, Liu and Wang \cite{YLXP} for the generalized KP equation, Levandosky \cite{L-98} concerning the fourth-order wave equation, Fukuizumi \cite{Fukuizumi} for the nonlinear Schr\"odinger equation with harmonic potential,  and Quintero \cite{JQ2005,JQ2010b} for the 2D Benney-Luke equation and the 2D Boussinesq type system, among others.

We mention that, since the literature in this area is vast, the cited references are a small sample - not exhaustive - about the stability results and the use of the GSS approach, thus, we suggest readers see more details in the previous works and the above-listed references, as well as the references therein.

\subsection{Main result} Given the state of the art, our work is motivated due to the results of \cite{CNS2010,CNS2011,HSS2013} that deal with one-dimensional $abcd$-Boussinesq system. We are now presenting our main result; however, first, let us introduce some notations.

By a solitary-wave solution, we shall mean a solution $(\eta, u)$ of \eqref{1bbl} taking the form
\begin{equation}\label{etaPhi}
\eta(x,t)=  \psi\left({x-\omega
t}\right), \ \ u(x, t)=
v\left({x-\omega t}\right),
\end{equation}
where $\omega$ denotes the wave's speed of propagation and $\psi, v$ approach to zero as $x$ goes to infinity. In what follows, we require that  $(\eta(x,t),u(x,t))\in X:=H^1(\mathbb{R})\times H^1(\mathbb{R})$ and restrict ourselves to the case (C1). Considering $\xi=x-\omega t$ and substituting the form of the solution \eqref{etaPhi} into \eqref{1bbl}, integrating once and evaluating the constants of the integration using the fact that $(\psi,v)\in X$, one sees that $(\psi,v)$ must satisfy the following system
\begin{equation}
\left\{
\begin{array}{ll}
-\omega\left(\psi-b\psi''\right) + v+a v''  +\psi
v^p& =0,
 \label{trav-eqs}\\
\omega\left(-v+bv''\right)+ \psi +c\psi''
+\frac{1}{p+1} v^{p+1}&=0.
\end{array}\right.
\end{equation}

We note that traveling-wave solutions can be considered as the critical points (i.e., minimizers) of a minimization problem, that is, the existence of solitary-wave solutions for the system \eqref{1bbl} is a consequence of a variational approach that applies a minimax type result since solutions $(\psi,v)$ of the system \eqref{trav-eqs} are the critical points of following functional $J_{\omega}=2\mathcal{F_{\omega}}$ defined by
\begin{equation}\label{JJ_w}
 J_{\omega}(\psi, v)=I_{\omega}(\psi, v)+ G(\psi, v),
 \end{equation}
 where $\mathcal{F_{\omega}}$ is given by \eqref{F_w}. Here,  the functional $I_\omega$ and $G$ are defined in the space $X$ by
\begin{equation}\label{I_w}
I_{\omega}(\psi,v)=I_1(\psi,v)+I_{2,{\omega}}(\psi,v),
\end{equation}
with
\begin{equation*}
I_1(\psi, v)=  \int_{\mathbb R}\left[\psi^2 - c(\psi')^{2} +
v^2-a( v')^2\right]dx,
\end{equation*}
\begin{equation}\label{NEW-R}
I_{2, \omega}(\psi, v)=  -2\omega\int_{\mathbb R}\left(\psi-b\psi'' \right)v\,dx= -2\omega\int_{\mathbb R}\left(\psi v+b\psi' v' \right)\,dx
\end{equation}
and
\begin{equation*}
G(\psi, v) =   \frac{2}{p+1}\int_{\mathbb R}\psi v^{p+1}\,dx.
\end{equation*}
\medskip

\begin{rem}\label{rems} Some remarks are worthy of mentioning.
\vspace{0.2cm}
\begin{itemize}
\item[a.] A ground state solution is a solitary-wave solution that minimizes the action functional $J_\omega$ among all the nonzero solutions of \eqref{trav-eqs}.
\item[b.]  If $(\psi, v)$ is a solution of \eqref{trav-eqs},  the following quantities hold,
\begin{align}\label{critical1}
J_{\omega}(\psi, v)& = \frac{p}{p+2}I_\omega(\psi, v),\\
J_{\omega}(\psi, v)& = -\frac{p}{2}G(\psi, v),\label{critical2}\\
I_\omega(\psi, v)& = -\frac{p+2}{2}G(\psi, v). \label{critical3}
\end{align}
\item[c.]  So, as proposed in \cite[Theorems 2 and 3]{GSS}, the analysis of the orbital stability of ground state solutions depends upon some properties of the scalar function $d:=d(\omega)$ given by
\begin{equation}\label{d_w-a}
d(\omega)= \inf\{J_\omega(\psi, v) :   \   (\psi, v)\in\mathcal{M}_\omega\},
\end{equation}
where $\mathcal{M}_\omega$ is defined by the following set
\begin{equation}\label{M_w}
\mathcal{M}_\omega=\left\{ (\psi, v)\in X  \  :  \  K_\omega(\psi, v)=0,  \
(\psi, v)\neq0\right\},
\end{equation}
with $$
K_\omega(\psi, v)=\left<J_{\omega}'(\psi, v), (\psi, v)\right>.
$$
\end{itemize}
\end{rem}

With all these notations and definitions in hand, the main result of the article gives a positive answer to the \textbf{orbital stability problem} (or actually the \textbf{set stability}) presented at the beginning of the introduction for certain $p \geq 1$. In other words, the generalized $abcb$-Boussinesq system \eqref{1bbl} has a set of traveling-wave solutions that is stable when the \textit{wave speed} $\omega_0$ of the traveling waves is near $1^-$.

\begin{thm}[\textit{Set stability}]\label{main} For the generalized $abcb$-Boussinesq system \eqref{1bbl}, with $b>0$, $a<0$ and $c<0$, and $p$ satisfying \eqref{assump_p}, there is a non-empty set of traveling-wave solutions with speed $\omega$, denoted by $\tilde {\mathcal G}_{\omega}$,   if
\begin{equation}\label{relation_c}
2b < -a -c  \quad \text{and}\quad 0< |\omega| <\min\left\{1,\sqrt{ac}/b \right\},
\end{equation}
are satisfied.  Furthermore, if $b \leq \sqrt{ac}$ \footnote{This implies $2b<  -a-c$ if $a \not= c$.}, $2b  <  - a - c$ and $ 1\leq p < p_0 $  with a unique critical number $p_0 > 4 $, the set $\tilde {\mathcal G}_{\omega}$ for the generalized $abcb$-Boussinesq system \eqref{1bbl}  with $\omega >0$, but near $1^{-}$, is stable in the sense of \eqref{OSP} with the set being $\tilde {\mathcal G}_{\omega}$. In other words,
given $(\tilde \eta_{\omega},\tilde u_{\omega}) \in \tilde {\mathcal G}_{\omega}$ for $\omega >0$, but near $1^{-}$, if $(\eta(0),u(0))$ is near $(\tilde \eta_{\omega},\tilde u_{\omega})$ in the space $X$,  the solution $(\eta(t),u(t))$ remains near the set $\tilde {\mathcal G}_{\omega}$ in the space X.
\end{thm}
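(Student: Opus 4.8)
The plan is to run the Grillakis--Shatah--Strauss program in its ``direct'' form (cf.\ Shatah \cite{Shatah} and the framework of \cite{GSS}): orbital stability of the set $\tilde{\mathcal G}_\omega$ will be deduced from the relative compactness of minimizing sequences for $d(\omega)$, together with conservation of $\mathcal H$ and $\mathcal Q$ along the (globally defined, for small data) flow of \eqref{1bbl}; the only genuinely new ingredient is the identification, via a long-wave rescaling, of the ground states and of $d(\omega)$ as $\omega\to 1^-$ with those of a generalized KdV equation, which is what upgrades the threshold from $4$ to a larger $p_0$.

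\textbf{Existence of $\tilde{\mathcal G}_\omega$.} I would first record that, under $0<|\omega|<\min\{1,\sqrt{ac}/b\}$, the quadratic form $I_\omega$ splits into the $2\times2$ forms $\psi^2+v^2-2\omega\psi v$ and $-c(\psi')^2-a(v')^2-2\omega b\,\psi'v'$, both positive definite precisely under that constraint, so $I_\omega\simeq\|\cdot\|_X^2$. Using the homogeneity identities \eqref{critical1}--\eqref{critical3}, minimizing $J_\omega$ over $\mathcal M_\omega$ is equivalent to minimizing the coercive functional $I_\omega$ under the single homogeneous constraint $G=\mathrm{const}<0$. A minimizing sequence is then bounded in $X$, and I would apply Lions' concentration--compactness principle \cite{Lions}: vanishing is excluded since it forces $G\to0$ (Gagliardo--Nirenberg), and dichotomy is excluded by the strict subadditivity of the constrained infimum in the constraint level, which follows from scaling (here $2b<-a-c$ enters to keep the limiting problem nondegenerate). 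Strong convergence up to translation yields a minimizer, whose Lagrange multiplier is removed by rescaling via \eqref{critical1}--\eqref{critical3}, producing a solution of \eqref{trav-eqs}; hence $\tilde{\mathcal G}_\omega\neq\emptyset$ whenever \eqref{relation_c} holds.

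\textbf{KdV limit and the profile of $d(\omega)$.} Put $\varrho=1-\omega\to0^+$. Writing $v_\omega=\psi_\omega+w_\omega$ and solving the first line of \eqref{trav-eqs} for $w_\omega$ perturbatively gives $w_\omega=-\varrho\,\psi_\omega-(a+b)\psi_\omega''-\psi_\omega^{p+1}+\cdots$; substituting into the second line, the $O(1)$ terms cancel and the residual equation is, at leading order,
\[
2\varrho\,\psi-(-a-c-2b)\,\psi''+\tfrac{p+2}{p+1}\,\psi^{p+1}=0 ,
\]
which, since $-a-c-2b>0$ by \eqref{relation_c}, is the generalized KdV solitary-wave equation and has the explicit $\operatorname{sech}^{2/p}$ ground state under the scaling $\psi(\xi)=\lambda\,P(\mu\xi)$ with $\mu^2\sim\varrho$, $\lambda^p\sim\varrho$. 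I would make this rigorous by proving $\varrho$-uniform $H^1$ bounds on the rescaled ground states and passing to the limit to identify the (nondegenerate) gKdV ground state $P$, so that the variational characterization yields
\[
d(\omega)=A\,(1-\omega)^{\frac{2}{p}+\frac12}\bigl(1+o(1)\bigr),\qquad \omega\to1^- ,
\]
with $A>0$ an explicit multiple of $\|P\|_{L^2}^2+\|P'\|_{L^2}^2$ --- together with the size and sign of the next-order term, which is where the parameter $b$ (and hence the charge $\mathcal Q$) enters.

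\textbf{Convexity of $d$ and the stability conclusion.} Since the Lagrange multiplier vanishes one has $d'(\omega)=2\,\mathcal Q(\psi_\omega,v_\omega)$, so differentiating the expansion shows that at leading order $d''(\omega)$ has the sign of $(\tfrac2p+\tfrac12)(\tfrac2p-\tfrac12)$, i.e.\ of $4-p$; this already gives strict convexity near $1^-$, hence stability, for $1\le p<4$. For $p$ slightly above $4$ the leading term of $d''(\omega)$ is negative but small, while the first $b$-correction to the KdV profile produces a next-order term that dominates it on a nonempty interval $(\omega_\ast,1)$, so $d$ remains strictly convex there; tracking where this mechanism ceases pins down the unique critical value $p_0>4$. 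With strict convexity of $d$ in hand, the stability follows as in the existence step: if \eqref{OSP} failed for some admissible $\omega$, conservation of $\mathcal H,\mathcal Q$ and continuity would turn an escaping trajectory into a minimizing sequence for $d(\omega)$, which by concentration--compactness --- the strict convexity of $d$ supplying exactly the subadditivity needed to rule out dichotomy --- converges, up to translation, to an element of $\tilde{\mathcal G}_\omega$, a contradiction; global existence of these solutions is automatic because, by the expansion, the ground states are small in $X$ for $\omega$ near $1^-$, so the small-data global well-posedness in $X=H^1(\mathbb R)\times H^1(\mathbb R)$ for $b=d>0$ applies. I expect the main obstacle to be the two-term asymptotics of $d(\omega)$ as $\omega\to1^-$ with control on the sign of the correction: that is precisely what promotes the classical KdV threshold $p=4$ to a strictly larger $p_0$.
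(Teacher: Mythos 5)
Your overall architecture coincides with the paper's: existence of $\tilde{\mathcal G}_\omega$ by concentration--compactness under \eqref{relation_c}, identification of the $\omega\to1^-$ limit of the rescaled minimizers with a gKdV solitary wave, strict convexity of $d(\omega)$ near $1^-$, and a Shatah-type contradiction argument using conservation of $\mathcal H,\mathcal Q$ and relative compactness of minimizing sequences. The existence step and the final contradiction step are essentially those of the paper, although you omit the quantitative comparison inequality of Lemma \ref{prelem},
$\mathcal H(\psi,v)-\mathcal H(\tilde\psi_\omega,\tilde v_\omega)+\omega(\psi,v)\bigl(\mathcal Q(\psi,v)-\mathcal Q(\tilde\psi_\omega,\tilde v_\omega)\bigr)\ge\frac14 d''(\omega)\,|\omega(\psi,v)-\omega|^2$,
built from the $C^1$ map $\omega(\psi,v)=d^{-1}\!\left(-\tfrac4p G(\psi,v)\right)$; this is what actually converts convexity of $d$ into the stability estimate.

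The genuine gap is in the convexity step, exactly where the novelty ($p_0>4$) lives. First, your mechanism for producing $p_0>4$ cannot work as stated: the leading term of $d''(\omega)$ scales like $(1-\omega)^{\frac2p-\frac32}$, which \emph{diverges} as $\omega\to1^-$; if its coefficient were a negative multiple of $p-4$ for $p>4$, no subordinate ``next-order'' correction could restore $d''>0$ on any neighborhood of $1^-$, so your scheme would yield $d''<0$ (hence no stability) for $4<p<p_0$. In the paper $p_0$ is not produced by a correction term at all: it is the root of the \emph{leading-order} coefficient of $d''$, which via the exact formula \eqref{deri2} reduces to the sign of $\frac{4\int_{\mathbb R}w_0^2\,dy}{p\,\mathcal J^0}-\frac{p}{p+2}$ and, upon inserting the explicit $\operatorname{sech}^{2/p}$ profile, to the root of $\left(\frac{p+2}{p+1}\right)^{2/p}-\frac{p^2}{2(p+4)}$. (Your observation that naive twice-differentiation of $d\sim A(1-\omega)^{\frac2p+\frac12}$ gives the sign of $4-p$ is in fact what the Pohozaev identity $\int w_0^2=\frac{p+4}{2(p+2)}\mathcal J^0$ for \eqref{kdv-t} would also give; reconciling this with the explicit evaluations \eqref{rrr1}--\eqref{rrrr1} is precisely the point that must be settled, since the entire claim $p_0>4$ rests on it.) Second, you nowhere justify differentiating the asymptotic expansion of $d$ twice, and this is the technical core of the paper: it works instead with the exact identity $d'(\omega)=\mathcal Q(\tilde\psi_\omega,\tilde v_\omega)=\frac12\left(\frac2{p+2}\right)^{2/p}(I_\omega)^{2/p}I_{2,\omega}/\omega$ obtained from one-sided difference quotients (Lemma \ref{estd}), and then controls $\frac{d}{d\omega}(I_{2,\omega}/\omega)$ by proving a uniform $H^1$ bound on $\epsilon^{\frac{p-1}{p+1}}\partial_\epsilon(z^\epsilon,w^\epsilon)$ via a contraction-mapping argument on the transformed traveling-wave system (Appendix \ref{A}), together with a local branch/uniqueness statement for the minimizers (Lemma \ref{prepare}) so that $d'$ and $d''$ are even well defined along the family. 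None of this is supplied or replaced in your proposal.
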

\begin{rem} \label {Remark 1.3}
Here, it is very interesting to see that the stability of the set $\tilde {\mathcal G}_{\omega}$ is obtained for any $p$ with  $ 1\leq  p < p_0$ where $p_0 $ is strictly large than $4$, while the critical exponent for the stability of solitary waves for the generalized KdV equations is $p_0 = 4$. Moreover, it can be shown numerically that $p_0$ is approximately equal to $4.2280673976$. The instability problem for $ p  > p_0$ will be studied in the near future.
\end{rem}

\subsection{Heuristic and structure of the article}  Let us highlight the present work's contribution and provide a summary of how Theorem \ref{main} can be obtained.

Observe that the natural space (energy space) in which we consider the well-posedness of the Cauchy problem is $X$. This comes from the fact that the Hamiltonian structure defined in \eqref{ham} and $\mathcal{F}_{{\omega}}$ given by \eqref{F_w} require $(\eta(x,t),u(x,t))\in X$ to be well defined. Additionally, these conditions already characterize the natural space (energy space) for traveling-wave solutions of the generalized  $abcb$-Boussinesq system \eqref{1bbl}.

The difficulty in using \cite[Theorem 3]{GSS} appears when computing $\mathcal{F}''_{\omega}$ around the traveling wave $(\eta_{\omega},u_{\omega})$ since we do not know explicitly the characterization of this pair for the generalized $abcb$-Boussinesq  model. In other words, it is almost impossible to establish the spectral hypotheses on the second variation of the action functional on the traveling wave. We appeal to the variational characterization of traveling-wave solutions to overcome this difficulty. Precisely, by the quantities \eqref{critical1} and \eqref{critical3}, we can define a scalar function $d(\omega)$, see equation \eqref{d_w}, establishing the convexity of $d$, since we can prove that $d''(\omega)>0$.

Two tools will be useful to prove the minimization problem and show that $d(\omega)$ is strictly convex. The first one is related to the existence of traveling-wave solutions for \eqref{trav-eqs} as a minimizer problem. In our context, we will invoke the classical Lions' concentration-compactness Theorem \cite{Lions,Lions2}. Together with this result, the second tool is to see that the generalized Korteweg-de Vries (KdV) equation
\begin{equation}\label{kdv}
u_t + u_{x}+\left(\frac13-\sigma\right)u_{xxx}+(u^{p+1})_{x}=0,
\end{equation}
emerges from the generalized $abcb$-Boussinesq system (up to some order). For this fact, it is natural to expect that the family of solitary-wave solutions of the generalized $abcb$-system \eqref{1bbl} converges to nontrivial solitary-wave solutions of the generalized KdV equation \eqref{kdv}. Putting these two important tools together,  we can reach the convexity of the scalar function $d(\omega)$, taking into account an important fact of a transformed system related to \eqref{trav-eqs} (see Appendix \ref{A}). Summarizing what concerns our main result, Theorem \ref{main}, the following points are worthy of mentioning.
\vspace{0.2cm}
\begin{itemize}
\item[a.] In \cite{HSS2013}, the authors suggest that the GSS approach fails when applied to the system \eqref{1bbl1}. However, our work showed that the stability (Theorem \ref{main}) is a direct consequence of the GSS approach. The main ingredients in this analysis are: \textit{KdV scaling for the generalized $abcb$-Boussinesq system and its properties} and  \textit{GSS approach}.
\vspace{0.1cm}
\item[b.] To the authors' best knowledge, no attempt has been made to apply this strategy for the system \eqref{1bbl}. Thus, in the context presented in this article, we give a necessary first step in understanding the stability using the previous ingredients for the generalized $abcd$-Boussinesq system in the Hamiltonian case.
\vspace{0.1cm}
\item[c.] It is important to point out that our main result, Theorem \ref{main}, suggests that the set $\tilde {\mathcal G}_{\omega}$ for the generalized $abcb$-Boussinesq system \eqref{1bbl}  with speed $\omega >0$, but near $1^{-}$, could be is unstable when $p > p_0$, i.e, relation \eqref{OSP} fails. In this way, we will soon present a detailed study of the instability of the generalized $abcb$-Boussinesq system in a forthcoming paper.
\item[d.]Note that the solutions of the system \eqref{1bbl}, for $p>1$ with the initial conditions close to travelling waves with small amplitudes, are global (for the case $p=1$,
the global  well-posedness problem of \eqref{1bbl} was addressed in \cite{BCS04}). The global existence follows from the presence of invariant sets preserved by the flow. A brief proof of the global existence is provided in Appendix \ref{C}. %Precisely, the following regions 
\end{itemize}

%\vspace{0.2cm}

We finish this introduction with an outline of this work, which consists of six parts, including the introduction. Section \ref{sec2} gives a brief discussion of the existence of minimizers, that is, we present the existence of solitary-wave solutions for the system \eqref{1bbl}. Section \ref{sec3} is devoted to proving carefully the inter-relation between the generalized KdV equation \eqref{kdv} and the generalized $abcb$-Boussinesq system \eqref{1bbl}. Section \ref{sec4} gives the properties of the scalar function $d(\omega)$, in particular, the strict convexity of $d(\omega)$ for $\omega \in(0,1)$, near $1^-$. In Section \ref{sec5}, we will give the proof of Theorem \ref{main} using the GSS approach, showing that the solution set of the generalized $abcb$-Boussinesq system \eqref{1bbl} is stable. Finally, Appendix \ref{A} is devoted to giving properties of a transformed system associated with \eqref{trav-eqs}, which is the key point to prove the convexity for the scalar function $d(\omega)$. Moreover, in Appendix \ref{B} we presented the concentration-compactness argument which is used in Section \ref{sec3}. Appendix \ref{C} provides brief proofs of the global existence of solutions for small initial data and the existence of a branch of minimizers passing through a global minimizer.

%%%%%%%%%%%%%%%%%%%%%%%%sec2%%%%%%%%%%%%%%%%%%%%%%%

\section{Brief discussion on the existence of minimizers} \label{sec2}

It is well known that the existence of traveling-wave solutions for \eqref{trav-eqs} as a minimizer of the following problem
\begin{equation}\label{mp}
\mathcal I_{\omega}=\inf \left\{I_{\omega}(\psi, v) \in X: G(\psi, v)=-1 \right\}
\end{equation}
is based on the existence of a compact embedding (local) result and also on an important result by P.-L. Lions, which completely characterizes the convergence of measures, is known as the \textit{Concentration-Compactness principle}.

\begin{thm}[P.-L. Lions \cite{Lions,Lions2}]\label{lions}
Suppose $\left\{\nu_n\right\}$ is a sequence of nonnegative measures on $\mathbb{R}^k$ such that
$$
\lim _{n \rightarrow \infty} \int_{\mathbb{R}^k} d \nu_n=\mathcal{I}\, .
$$
Then, there is a subsequence of $\left\{\nu_n\right\}$ (which is denoted the same) that satisfies only one of the following properties.
\vspace{0.2cm}
\begin{itemize}
\item[i.]\textbf{Vanishing:} For any $R>0$,
$$
\lim _{n \rightarrow \infty}\left(\sup _{x \in \mathbb{R}^k} \int_{B_R(x)} d \nu_n\right)=0,
$$
where $B_R(x)$ is the ball of radius $R$ centered at $x$.
\vspace{0.2cm}
\item[ii.]\textbf{Dichotomy:} There exist $\theta \in(0, \mathcal{I})$ such that for any $\gamma>0$, there are $R>0$ and a sequence $\left\{ x_n \right\}$ in $\mathbb{R}^k$ with the following property: Given $R^{\prime}>R$ there are nonnegative measures $\nu_n^1, \nu_n^2$ such that
\begin{enumerate}
\item[a)] $0 \leq \nu_n^1+\nu_n^2 \leq \nu_n$;
\item[b)] $\operatorname{supp}\left(\nu_n^1\right) \subset B_R\left(x_n\right), \quad \operatorname{supp}\left(\nu_n^2\right) \subset \mathbb{R}^k \backslash B_{R^{\prime}}\left(x_n\right)$;
\item[c)] $$\lim \sup _{n \rightarrow \infty}\left(\left|\theta-\int_{\mathbb{R}^k} d \nu_n^1\right|+\left|(\mathcal{I}-\theta)-\int_{\mathbb{R}^k} d \nu_n^2\right|\right) \leq \gamma.$$
\end{enumerate}
\vspace{0.2cm}
\item[iii.]\textbf{Compactness:} There exists a sequence $\left\{ x_n \right\}$ in $\mathbb{R}^k$ such that for any $\gamma>0$, there is $R>0$ with the property that
$$
\int_{B_R\left(x_n\right)} d \nu_n \geq \mathcal{I}-\gamma, \quad \text { for all } n.
$$
\end{itemize}
\end{thm}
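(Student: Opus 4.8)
This is the classical concentration--compactness lemma of P.-L.\ Lions, so the plan is to reproduce the original argument built on the L\'evy concentration function. Writing $\lambda_n=\int_{\mathbb{R}^k}d\nu_n$ (so $\lambda_n\to\mathcal{I}$), I would introduce for each $n$ the function
\[
Q_n(t)=\sup_{x\in\mathbb{R}^k}\int_{B_t(x)}d\nu_n,\qquad t\ge 0.
\]
Each $Q_n$ is nonnegative, nondecreasing in $t$ and bounded above by $\lambda_n$, so the family is uniformly bounded, and Helly's selection theorem furnishes a subsequence (not relabeled --- this will serve as \emph{the} subsequence of the statement) along which $Q_n\to Q$ pointwise on $[0,\infty)$ with $Q$ nonnegative and nondecreasing. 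Set $\alpha=\lim_{t\to\infty}Q(t)\in[0,\mathcal{I}]$. The trichotomy is then decided by the value of $\alpha$: $\alpha=0$ yields vanishing, $\alpha=\mathcal{I}$ yields compactness, and $0<\alpha<\mathcal{I}$ yields dichotomy with $\theta:=\alpha$.

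The vanishing alternative is immediate: if $\alpha=0$ then $Q(t)=0$ for every $t$, so for each fixed $R>0$ one has $\sup_{x}\int_{B_R(x)}d\nu_n=Q_n(R)\to 0$, which is (i). In the dichotomy range $0<\alpha<\mathcal{I}$, fix $\gamma>0$; choose $R>0$ large enough that $Q(R)>\alpha-\gamma/4$, and for each $n$ pick $x_n$ with $\int_{B_R(x_n)}d\nu_n>Q_n(R)-\gamma/4$. For $n$ large this forces $\alpha-\gamma/2<\int_{B_R(x_n)}d\nu_n\le Q_n(R)$ with $Q_n(R)\to Q(R)\le\alpha$, and for any $R'>R$ one also has $\int_{B_R(x_n)}d\nu_n\le\int_{B_{R'}(x_n)}d\nu_n\le Q_n(R')\to Q(R')\le\alpha$. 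Taking $\nu_n^1=\nu_n\big|_{B_R(x_n)}$ and $\nu_n^2=\nu_n\big|_{\mathbb{R}^k\setminus B_{R'}(x_n)}$, properties (a) and (b) hold because $B_R(x_n)$ and $\mathbb{R}^k\setminus B_{R'}(x_n)$ are disjoint, while $\int_{\mathbb{R}^k}d\nu_n^1=\int_{B_R(x_n)}d\nu_n$, $\int_{\mathbb{R}^k}d\nu_n^2=\lambda_n-\int_{B_{R'}(x_n)}d\nu_n$, and the bounds just displayed together with $\lambda_n\to\mathcal{I}$ give
\[
\limsup_{n\to\infty}\left(\left|\theta-\int_{\mathbb{R}^k}d\nu_n^1\right|+\left|(\mathcal{I}-\theta)-\int_{\mathbb{R}^k}d\nu_n^2\right|\right)\le\gamma,
\]
which is (c). The point is that $R$ and $\{x_n\}$ depend only on $\gamma$, with $R'$ entering only through the trimming that defines $\nu_n^2$.

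The compactness case $\alpha=\mathcal{I}$ is where the real work lies, since the statement demands a single sequence $\{x_n\}$ good for all $\gamma>0$ at once. I would fix one value $\gamma_1\in(0,\mathcal{I}/3)$, choose $R_1$ with $Q(R_1)>\mathcal{I}-\gamma_1/2$, and pick $x_n$ with $\int_{B_{R_1}(x_n)}d\nu_n>\mathcal{I}-\gamma_1$ for $n$ large. For $\gamma\ge\gamma_1$ the radius $R=R_1$ already gives (iii). For $0<\gamma<\gamma_1$, choose $R'\ge R_1$ with $Q(R')>\mathcal{I}-\gamma/2$ and, for each $n$, a point $y_n$ with $\int_{B_{R'}(y_n)}d\nu_n>\mathcal{I}-\gamma$; if $B_{R_1}(x_n)$ and $B_{R'}(y_n)$ were disjoint their combined $\nu_n$-mass would exceed $2\mathcal{I}-\gamma_1-\gamma>\mathcal{I}+\gamma_1>\lambda_n$ for $n$ large, a contradiction. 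Hence $|x_n-y_n|\le R_1+R'$, so $B_{R'}(y_n)\subset B_{R_1+2R'}(x_n)$ and $\int_{B_R(x_n)}d\nu_n>\mathcal{I}-\gamma$ with $R:=R_1+2R'$, which is (iii) with the fixed sequence $\{x_n\}$.

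The only genuinely delicate matter throughout is this uniformity: extracting one subsequence and one sequence of centers that serve every threshold $\gamma$ simultaneously. The Helly extraction handles the subsequence; the overlap argument in the compactness case, together with the fact that in the dichotomy case $R$ and $\{x_n\}$ are pinned down before any $R'$ is considered, handles the centers. Everything else is elementary bookkeeping about restrictions of finite measures to disjoint sets.
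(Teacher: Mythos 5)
The paper does not prove this statement: it is quoted as Lions' concentration--compactness lemma with a citation to \cite{Lions,Lions2}, so there is no in-paper argument to compare against. Your reconstruction via the L\'evy concentration functions $Q_n$, Helly selection, and the trichotomy on $\alpha=\lim_{t\to\infty}Q(t)$ is the standard and correct proof, and you handle the two genuinely delicate points properly (choosing $R$ and $\{x_n\}$ before $R'$ in the dichotomy case, and the ball-overlap argument producing a single sequence of centers in the compactness case). The only caveats are cosmetic: since $\lambda_n\to\mathcal{I}$ only in the limit, the compactness inequality (and the choice of $y_n$) can be guaranteed only for all sufficiently large $n$, i.e.\ after discarding finitely many terms of the subsequence, and the support of $\nu_n|_{B_R(x_n)}$ lies in the closed ball rather than the open one; both are absorbed by a harmless relabeling or enlargement of $R$.
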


To apply this result to our case, we note that for a
minimizing sequence $\left\{\left(\psi_n, v_n\right)\right\}$, we may define the concentration function induced by the integrand of $I_\omega(\psi,v)$ as $$\rho_n=\left(\psi_n^{\prime}\right)^2+\psi_n^2+\left(v_n^{\prime}\right)^2+v_n^2,$$
and the measure
\[
\nu_n(A)=\int_A \rho_n(x)\,dx\leq \left\|\left(\psi_n, v_n\right)\right\|_X \leq C, \ \ \mbox{for all $n\in \mathbb{N}$}\, ,
\]
where $I_\omega ( \psi_n , v_n ) $ is equivalent to $\int_{\mathbb{R}} \rho_n dx$ if $ 0 < |\omega | < \min ( 1 , \sqrt{ca}/b ) $, with $A\subset\mathbb{R}$.
As $\left\|\left(\psi_n, v_n\right)\right\|_X \leq C$ for all $n$, we can extract a convergent subsequence which we again denote as $\left\{\left(\psi_n, v_n\right)\right\}$, so that
$$
\lambda=\lim _{n \rightarrow \infty} \int_{-\infty}^{\infty} \rho_n(x) d x
$$
exists. Define a sequence of non-decreasing functions $M_n:[0, \infty) \rightarrow[0, \lambda]$ as follows:
$$
M_n(r)=\sup _{y \in \mathbb{R}} \int_{y-r}^{y+r} \rho_n(x) d x .
$$
Since $M_n(r)$ is a uniformly bounded sequence of non-decreasing functions in $r$, one can show that it has a subsequence, which we still denote as $M_n$, that converges pointwise to a non-decreasing limit function $M(r):[0, \infty) \rightarrow[0, \lambda]$. Let
$$
\lambda_0=\lim _{r \rightarrow \infty} M(r): \equiv \lim _{r \rightarrow \infty} \lim _{n \rightarrow \infty} \sup _{y \in \mathbb{R}} \int_{y-r}^{y+r} \rho_n(x) d x .
$$
Then $0 \leq \lambda_0 \leq \lambda$.

As is well known for dispersive type systems (see, for instance, \cite{CNS2010,DPDE2013}, for one and two-dimensional cases, respectively),  ruling out vanishing and dichotomy for a minimizing sequence of $I_{\omega}$, the Lion’s Concentration Compactness Theorem \ref{lions} ensures the existence of a subsequence of $\left\{\nu_n\right\}$ satisfying the compactness conditions. Therefore, as a consequence of local compact embedding, the minimizing sequence $\left\{\left(\psi_n, v_n\right)\right\}$ (or a subsequence) is compact in $X$,  up to translation. The proof is very standard and will be omitted. Thus,  the following theorem holds for the generalized $abcb$-Boussinesq system \eqref{1bbl}.
\begin{thm}\label{pt} Let $ 0 < |\omega | < \min ( 1 , \sqrt{ca}/b ) $.
If $\{(\psi_n,v_n)\}$ is a minimizing sequence for \eqref{mp}, then there is a subsequence, still denoted by the same index, a sequence of points $y_n \in \mathbb{R}$, and a minimizer $(\psi_0,v_0)\in
X$ of \eqref{mp}, such that the translated functions
$$(\tilde{\psi}_n,\tilde{v}_n)=(\psi_n(\cdot+y_n),v_n(\cdot+y_n))\to (\psi_0,v_0)\quad \text{strongly in } X.$$
\end{thm}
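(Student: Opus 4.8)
The plan is to run the Lions concentration--compactness scheme of Theorem~\ref{lions} on a minimizing sequence for the constrained problem \eqref{mp}; the argument has the usual three stages: produce a bounded minimizing sequence, rule out vanishing and dichotomy for the associated measures, and promote the compact alternative to strong convergence in $X$.

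First I would fix a minimizing sequence $\{(\psi_n,v_n)\}$ with $G(\psi_n,v_n)=-1$ and $I_\omega(\psi_n,v_n)\to\mathcal I_\omega$. The preliminary fact, already recorded in the excerpt, is that for $0<|\omega|<\min(1,\sqrt{ca}/b)$ the quadratic form $I_\omega=I_1+I_{2,\omega}$ is positive definite and equivalent to $\|\cdot\|_X^2$: one absorbs $I_{2,\omega}=-2\omega\int(\psi v+b\psi'v')$ into $I_1$ using $2|\psi v|\le|\psi|^2+|v|^2$ and the weighted Young inequality $2|\psi'v'|\le t|\psi'|^2+t^{-1}|v'|^2$ with $t=\sqrt{|c|/|a|}$, which reproduces exactly the thresholds $|\omega|<1$ and $|\omega|b/\sqrt{ca}<1$. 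Hence $\{(\psi_n,v_n)\}$ is bounded in $X$, and after passing to a subsequence $\lambda:=\lim_n\int_{\mathbb R}\rho_n\,dx$ exists, with $\rho_n=\psi_n^2+(\psi_n')^2+v_n^2+(v_n')^2$ the concentration density of the excerpt. Note $\lambda>0$ and $\mathcal I_\omega>0$: otherwise $\|(\psi_n,v_n)\|_X\to0$, whence by Sobolev and H\"older $|G(\psi_n,v_n)|\le\tfrac{2}{p+1}\|\psi_n\|_{L^{p+2}}\|v_n\|_{L^{p+2}}^{p+1}\to0$, contradicting $G=-1$. I would then apply Theorem~\ref{lions} to $\nu_n(A)=\int_A\rho_n\,dx$ with the constant there equal to $\lambda$.

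Next, vanishing and dichotomy must be excluded. For vanishing: if $\sup_y\int_{y-R}^{y+R}\rho_n\,dx\to0$ for every $R>0$, the standard vanishing lemma of Lions gives $\psi_n\to0$ and $v_n\to0$ in $L^q(\mathbb R)$ for every $q>2$, and exactly the H\"older estimate above forces $G(\psi_n,v_n)\to0$, impossible. For dichotomy, I would use strict subadditivity: put $J(\sigma)=\inf\{I_\omega(\psi,v):-G(\psi,v)=\sigma\}$; the scaling $(\psi,v)\mapsto(\tau\psi,\tau v)$ sends $I_\omega\mapsto\tau^2I_\omega$ and $G\mapsto\tau^{p+2}G$, so $J(\sigma)=\sigma^{2/(p+2)}\mathcal I_\omega$ with $\mathcal I_\omega>0$, and since $2/(p+2)<1$ one gets $J(\alpha+\beta)<J(\alpha)+J(\beta)$ for $\alpha,\beta>0$. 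The usual splitting --- cutting $(\psi_n,v_n)$ with a fixed cutoff into a piece supported near $x_n$ and a piece supported far from $x_n$, each carrying a nontrivial fraction of the $G$-mass, with the two $G$-masses summing to $-1$ and the two $I_\omega$-energies summing to $\mathcal I_\omega$ in the limit --- then contradicts this strict inequality. Here one has to be a little careful because $G$ is the \emph{mixed} functional $\int\psi v^{p+1}$ rather than a power of one unknown: cutting both $\psi_n$ and $v_n$ with the same cutoff and controlling the mixed near/far error terms by the $H^1$ bounds and the uniform smallness of $\int\rho_n$ over the transition annulus is what makes the splitting work, and this is the step I expect to require the most care, also because the fractional exponent $p$ in \eqref{assump_p} slightly complicates the power estimates relative to the scalar KdV case.

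Finally, in the compact alternative Theorem~\ref{lions} provides $\{x_n\}\subset\mathbb R$ so that for every $\gamma>0$ there is $R$ with $\int_{x_n-R}^{x_n+R}\rho_n\,dx\ge\lambda-\gamma$; this is tightness of the translated sequence $(\tilde\psi_n,\tilde v_n)=(\psi_n(\cdot+x_n),v_n(\cdot+x_n))$ in $X$. Passing to a further subsequence, $(\tilde\psi_n,\tilde v_n)\rightharpoonup(\psi_0,v_0)$ weakly in $X$; Rellich on bounded intervals together with the tightness upgrades this to $\tilde\psi_n\to\psi_0$, $\tilde v_n\to v_0$ strongly in $L^q(\mathbb R)$ for all $q\in(2,\infty)$, in particular in $L^{p+2}$, so $G(\psi_0,v_0)=\lim G(\tilde\psi_n,\tilde v_n)=-1$ and thus $(\psi_0,v_0)$ is admissible with $I_\omega(\psi_0,v_0)\ge\mathcal I_\omega$. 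Weak lower semicontinuity of the equivalent norm gives the reverse inequality $I_\omega(\psi_0,v_0)\le\liminf I_\omega(\tilde\psi_n,\tilde v_n)=\mathcal I_\omega$, hence equality, so $(\psi_0,v_0)$ is a minimizer of \eqref{mp}; moreover, since $I_\omega$ is an equivalent inner product on $X$, the combination of weak convergence and $I_\omega(\tilde\psi_n,\tilde v_n)\to I_\omega(\psi_0,v_0)$ forces $(\tilde\psi_n,\tilde v_n)\to(\psi_0,v_0)$ strongly in $X$. Taking $y_n=x_n$ completes the proof.
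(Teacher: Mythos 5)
Your proposal is correct and follows essentially the same route as the paper: for Theorem \ref{pt} the authors invoke the Lions concentration--compactness scheme and omit the details as standard, but they carry out precisely the argument you describe --- coercivity of $I_\omega$ under $0<|\omega|<\min(1,\sqrt{ca}/b)$, vanishing excluded via the constraint $G=-1$, dichotomy excluded by the strict subadditivity coming from the homogeneities $2$ and $p+2$ (the strict concavity of $\lambda^{2/(p+2)}$), and the compact alternative upgraded to strong convergence in $X$ --- in full for the analogous Claim I in Appendix \ref{B}. The only point you gloss over (and rightly flag as delicate) is ruling out that one piece in the dichotomy splitting carries zero $G$-mass, which the paper handles explicitly, but this does not affect the correctness of your outline.
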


\subsection{Minimization problem} With the previous result in hand, let us prove that \eqref{trav-eqs} has a nontrivial solution. Considering the minimization problem \eqref{mp}, observe that the constraint $G(\psi, v)=-1$ is necessary since the quantity given by \eqref{critical3} needs to be positive. Moreover,  noting that $H^{1}(\mathbb R) \hookrightarrow L^{q}(\mathbb R)$ for all $q\geq 2$  and the Young's inequality,  we have
\begin{equation}\label{G1G2}
|G(\psi,v)|\leq M(\|\psi\|^{p+2}_{L^{p+2}(\mathbb
R)}+\|v\|^{p+2}_{L^{p+2}(\mathbb R)})\leq M \|(\psi, v)\|_{X}^{p+2}.
\end{equation}
Our first lemma ensures some boundedness for the quantity \eqref{critical3}.
\begin{lem}
For \eqref{relation_c} being satisfied, the functional $I_{\omega}$ defined by \eqref{I_w} is nonnegative. Moreover,  there are positive constants
$M_1(a, b, c, \omega)$ and $M_2(a, b, c, \omega)$ such that
\begin{equation}\label{IG1}
M_1 \|(\psi, v)\|_{X}^{2} \leq I_\omega(\psi, v)\leq M_2
 \|(\psi, v)\|_{X}^{2},
\end{equation}
and $\mathcal I_{\omega}$, given by \eqref{mp},  is finite and positive.
\end{lem}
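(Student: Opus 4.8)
The plan is to treat $I_\omega$ as a quadratic form on $X$: first show it is bounded and coercive, after which the assertions about $\mathcal{I}_\omega$ drop out in a few lines. I would start by using $a,c<0$ to rewrite
\[
I_1(\psi,v)=\|\psi\|_{L^2}^2+|c|\,\|\psi'\|_{L^2}^2+\|v\|_{L^2}^2+|a|\,\|v'\|_{L^2}^2,
\]
so that $\min\{1,|a|,|c|\}\,\|(\psi,v)\|_X^2\le I_1(\psi,v)\le\max\{1,|a|,|c|\}\,\|(\psi,v)\|_X^2$, and then estimate the indefinite cross term in \eqref{NEW-R} by Cauchy--Schwarz:
\[
|I_{2,\omega}(\psi,v)|\le 2|\omega|\big(\|\psi\|_{L^2}\|v\|_{L^2}+b\,\|\psi'\|_{L^2}\|v'\|_{L^2}\big)\le |\omega|\max\{1,b\}\,\|(\psi,v)\|_X^2 .
\]
Together these give the upper bound in \eqref{IG1} immediately, e.g. with $M_2=\max\{1,|a|,|c|\}+|\omega|\max\{1,b\}$.

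For the lower bound I would exploit that $I_{2,\omega}$ has no $L^2$--gradient cross terms, so, writing $x_1=\|\psi\|_{L^2}$, $x_2=\|v\|_{L^2}$, $y_1=\|\psi'\|_{L^2}$, $y_2=\|v'\|_{L^2}$, one can split
\[
I_\omega(\psi,v)\ \ge\ \big(x_1^2+x_2^2-2|\omega|x_1x_2\big)+\big(|c|y_1^2+|a|y_2^2-2|\omega|b\,y_1y_2\big)
\]
and bound each group on its own. The identity $x_1^2+x_2^2-2|\omega|x_1x_2=(1-|\omega|)(x_1^2+x_2^2)+|\omega|(x_1-x_2)^2$ shows the first group is $\ge(1-|\omega|)(x_1^2+x_2^2)$, positive because $|\omega|<1$; and Young's inequality $2y_1y_2\le t\,y_1^2+t^{-1}y_2^2$ with $t=\sqrt{|c|/|a|}$ turns the second group into $\sqrt{|c|/|a|}\,(\sqrt{ac}-|\omega|b)\,y_1^2+\sqrt{|a|/|c|}\,(\sqrt{ac}-|\omega|b)\,y_2^2$, whose coefficients are positive precisely because $|\omega|b<\sqrt{ac}$. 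Collecting, $I_\omega(\psi,v)\ge M_1\|(\psi,v)\|_X^2$ for some $M_1=M_1(a,b,c,\omega)>0$; in particular $I_\omega\ge0$. (An equivalent but tidier route is Plancherel: $I_\omega=\int_{\mathbb{R}}\big[(1+|c|\xi^2)|\widehat\psi|^2+(1+|a|\xi^2)|\widehat v|^2-2\omega(1+b\xi^2)\mathrm{Re}(\widehat\psi\,\overline{\widehat v})\big]d\xi$, whose symbol matrix has determinant $(1-\omega^2)+(-a-c-2\omega^2b)\xi^2+(ac-\omega^2b^2)\xi^4$; all three coefficients are strictly positive under \eqref{relation_c} — this is where $2b<-a-c$ enters, together with $|\omega|<1$ — so the integrand dominates a positive multiple of $(1+\xi^2)(|\widehat\psi|^2+|\widehat v|^2)$ and both bounds in \eqref{IG1} follow at once.)

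Finally, for $\mathcal{I}_\omega$ in \eqref{mp}: the constraint set is nonempty, since for any $\phi\in H^1(\mathbb{R})$ with $\phi\ge0$, $\phi\not\equiv0$, one has $G(-\phi,\phi)<0$, and the scaling $(\psi,v)\mapsto(t\psi,tv)$ multiplies $G$ by $t^{p+2}$, so choosing $t$ appropriately yields a pair with $G=-1$. Since $I_\omega\ge0$, the infimum is a well-defined finite number $\mathcal{I}_\omega\ge0$. For strict positivity, if $G(\psi,v)=-1$ then \eqref{G1G2} gives $1=|G(\psi,v)|\le M\|(\psi,v)\|_X^{p+2}$, hence $\|(\psi,v)\|_X^2\ge M^{-2/(p+2)}$, and with the coercivity just proved $I_\omega(\psi,v)\ge M_1M^{-2/(p+2)}>0$ uniformly on the constraint set; taking the infimum gives $\mathcal{I}_\omega>0$.

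I expect the only genuinely delicate point to be the coercivity estimate — keeping track of the constants well enough to see that the indefinite term $I_{2,\omega}$ is \emph{strictly} dominated, and recognising that the two inequalities $|\omega|<1$ and $b|\omega|<\sqrt{ac}$ of \eqref{relation_c} are exactly what control the $L^2$ part and the gradient part, respectively. Everything else is routine Cauchy--Schwarz, Young's inequality, and the embedding $H^1(\mathbb{R})\hookrightarrow L^{p+2}(\mathbb{R})$ already used in \eqref{G1G2}.
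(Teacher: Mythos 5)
Your proof is correct and follows essentially the same route as the paper's: you absorb the indefinite cross term $I_{2,\omega}$ by completing squares (the paper does this pointwise in the integrand, you do it at the level of $L^2$-norms after Cauchy--Schwarz, and your Plancherel variant is the same computation in Fourier variables, with coercivity hinging on exactly the two conditions $|\omega|<1$ and $b|\omega|<\sqrt{ac}$), and then combine coercivity with \eqref{G1G2} on the constraint set to get $\mathcal{I}_\omega>0$. The only addition is your explicit check that the constraint set $\{G=-1\}$ is nonempty, which the paper leaves implicit.
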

\begin{proof} In fact,  using the quantity \eqref{I_w} and Young's inequality, we
obtain that
\begin{equation}\label{eqIN}
\begin{split}
I_\omega(\psi,v)\leq&\int_{\mathbb R}
\left[(1+|\omega|)\psi^2+\left(|c|+b|\omega| \right)(\psi')^2\right.\\ &\left.\quad\quad\quad+(1+|\omega|)v^2+\left(|a|+b|\omega|
\right)(v')^2
\right]dx \\
\leq& \max\left(1+|\omega|,|c|+b|\omega| , |a|+b|\omega|\right)\|(\psi,v)\|^2_{X}
\end{split}
\end{equation}
and
\begin{equation}\label{eqINa}
\begin{split}
I_\omega(\psi,v) =& \int_{\mathbb R}
\bigg [ (\psi - \omega v )^2 + \left ( \sqrt{|c|} \psi' - \left (b\omega /\sqrt{|c|} \right ) v' \right )^2 +
(1-|\omega|^2)\psi^2 \\
& \qquad +\left(|a|-\left ( b^2\omega^2/|c|\right )\right )   (v')^2
\bigg ]dx \\ \geq &\ C_0\|(\psi,v)\|^2_{X}\, ,
\end{split}
\end{equation}
if $ 0 < |\omega | < \min ( 1 , \sqrt{ca}/b ) $.
Inequalities \eqref{eqIN} and \eqref{eqINa} give \eqref{IG1}. On the other hand, using that $G(\psi,v)=-1$, we have from \eqref{G1G2} that
$$
M_1\left(I_{\omega}(\psi, v)\right)^{\frac{p+2}{2}} \geq M||(\psi, v)||^{p+2}_{X}\geq |G(\psi, v)| =1 \, ,
$$
which implies
\[
I_{\omega}(\psi, v)\geq  M_1^{-\frac{2}{p+2}},
\]
meaning that the infimum $\mathcal{I}_{\omega}$ is finite and positive.
\end{proof}

Thanks to Theorem \ref{pt},  the problem \eqref{mp} has a minimizer.  
Therefore,  the main result in this section ensures that \eqref{trav-eqs} has a nontrivial solution.
\begin{rem}\label{yyyyyy}
The problem \eqref{mp} gives global minimizers since $(\psi, v)$ does not have other restrictions except for the one stated. By using the same procedure, we may restrict $(\psi, v)$ in some open set $\Omega$ in $H^1\times H^1$ and  consider 
\begin{equation}
\hat{\mathcal{I} }_\omega = \inf \{ I _\omega (\psi , v) \ : \ (\psi , v)\in \Omega\, , \  \ G(\psi , v ) = -1\}\, .\label{wwwwww}
\end{equation}
If there is a $(\psi_0, v_0) \in \Omega$ such that $G(\psi_0 , v_0 ) = -1 $ and $I _\omega (\psi _0, v_0) < \inf \{ I _\omega (\psi , v) \ : \  (\psi , v) \in \partial \Omega\, , \  \ G(\psi , v ) = -1\} $, then the problem \eqref{wwwwww} has a minimizer in $\Omega$. Such minimizers may be called local minimizers.
\end{rem}

\begin{thm}\label{wsol}
Let $(\psi_0,v_0)$  be a minimizer (or a local minimizer) for the problem \eqref{mp}. Then, the function $(\psi,v)=\beta(\psi_0,v_0)$ is a nontrivial solution of \eqref{trav-eqs} for $\beta= (-\lambda)^{\frac1{p}}$ with $\lambda=-\frac{2}{p+2}\mathcal I_{\omega}$.
\end{thm}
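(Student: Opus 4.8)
The plan is the classical constrained-minimization / Lagrange-multiplier argument, made transparent by the homogeneity of the functionals involved. First I would record that, by the Sobolev embeddings $H^1(\mathbb{R})\hookrightarrow L^q(\mathbb{R})$, $q\ge 2$, already used in \eqref{G1G2}, the functionals $I_\omega$ and $G$ are of class $C^1$ on $X$ (in the local case, on the open set $\Omega$), that $I_\omega$ is homogeneous of degree $2$ (it is a quadratic form, hence $I_\omega'$ is linear, i.e.\ homogeneous of degree $1$), and that $G$ is homogeneous of degree $p+2$ (so $G'$ is homogeneous of degree $p+1$). Pairing with $(\psi_0,v_0)$ and using Euler's identity gives
\[
\langle G'(\psi_0,v_0),(\psi_0,v_0)\rangle=(p+2)\,G(\psi_0,v_0)=-(p+2)\neq 0,
\]
so the constraint qualification holds; since $(\psi_0,v_0)$ minimizes $I_\omega$ on $\{G=-1\}$ and lies in the interior of $\Omega$, the Lagrange multiplier rule yields a scalar $\lambda\in\mathbb{R}$ with
\[
I_\omega'(\psi_0,v_0)=\lambda\, G'(\psi_0,v_0).
\]

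To identify $\lambda$, I would pair this identity with $(\psi_0,v_0)$. Euler's identity gives $\langle I_\omega'(\psi_0,v_0),(\psi_0,v_0)\rangle=2I_\omega(\psi_0,v_0)=2\mathcal I_\omega$, while the pairing of $G'$ computed above equals $-(p+2)$; hence $2\mathcal I_\omega=-\lambda(p+2)$, i.e.
\[
\lambda=-\tfrac{2}{p+2}\,\mathcal I_\omega,
\]
which is exactly the value in the statement, and is strictly negative since $\mathcal I_\omega>0$ by the preceding lemma.

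Finally I would rescale. Set $(\psi,v)=\beta(\psi_0,v_0)$ with $\beta>0$ to be chosen. Using the degree-$1$ homogeneity of $I_\omega'$ and the degree-$(p+1)$ homogeneity of $G'$,
\[
I_\omega'(\psi,v)=\beta\,I_\omega'(\psi_0,v_0)=\beta\lambda\,G'(\psi_0,v_0)=\lambda\beta^{-p}\,G'(\psi,v).
\]
Comparing with the Euler--Lagrange equation $J_\omega'(\psi,v)=I_\omega'(\psi,v)+G'(\psi,v)=0$ of the action functional --- whose solutions are, by the discussion preceding \eqref{JJ_w}, precisely the solutions of \eqref{trav-eqs} --- forces $\lambda\beta^{-p}=-1$, i.e.\ $\beta^{p}=-\lambda$, i.e.\ $\beta=(-\lambda)^{1/p}$, which is well defined and positive since $-\lambda>0$; here the assumption \eqref{assump_p} on $p$ is what guarantees that the power nonlinearities $v^p, v^{p+1}$ appearing in \eqref{trav-eqs} are meaningful. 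The function $(\psi,v)$ is nontrivial because $G(\psi_0,v_0)=-1\neq 0$ forces $(\psi_0,v_0)\neq 0$ and $\beta\neq 0$, and writing $J_\omega'(\psi,v)=0$ componentwise recovers \eqref{trav-eqs} verbatim.

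I expect the only genuinely delicate point to be the justification of the Lagrange multiplier rule in this infinite-dimensional setting: concretely, verifying the constraint qualification $G'(\psi_0,v_0)\neq 0$ (immediate from the pairing computation above) and, in the local-minimizer case of Remark \ref{yyyyyy}, confirming that the minimizer lies in the interior of $\Omega$ so that no boundary contribution enters the first-order condition. Everything else is routine bookkeeping with the homogeneity exponents.
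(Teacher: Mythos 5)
Your proposal is correct and follows essentially the same route as the paper: the Lagrange multiplier rule, identification of $\lambda=-\frac{2}{p+2}\mathcal I_\omega$ by pairing with $(\psi_0,v_0)$ and using the homogeneities of $I_\omega$ and $G$, and then the rescaling $(\psi,v)=\beta(\psi_0,v_0)$ forcing $\lambda+\beta^p=0$. The only difference is that you make explicit the constraint qualification $G'(\psi_0,v_0)\neq 0$ and the $C^1$ regularity, which the paper leaves implicit.
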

\begin{proof}
From the Lagrange multiplier theorem, there exists $\lambda$ such that $$I_{\omega}'(\psi_0, v_0)= \lambda G'(\psi_0, v_0).$$ On the other hand,
\begin{equation*}
\begin{split}
2\mathcal I_{\omega}=& \ 2 I_{\omega}(\psi_0, v_0)\\=& \left< I'_{\omega}(\psi_0, v_0), (\psi_0, v_0)\right>\\=&\ \lambda\left< G'(\psi_0, v_0), (\psi_0, v_0)\right>\\=&\  \lambda(p+2)G(\psi_0, v_0).
\end{split}
\end{equation*}
In this case, we have that the Lagrange multiplier $\lambda$ is given by  $\lambda=-\frac{2}{p+2}\mathcal I_{\omega}$. If we take $(\psi,v)=\beta(\psi_0,v_0)$ with  $\beta= (-\lambda)^{\frac1{p}}$, we see that
\[
I_{\omega}'(\psi, v) + G'(\psi, v)=0 \ \ \iff \ \ \lambda +\beta^p=0,
\]
showing the result.
\end{proof}
\begin{defn}
We will now call the solution given in the previous theorem as \textit{solitary-wave solution}. This solution is indeed a classical solution of \eqref{trav-eqs}. We will also denote the set of those traveling-wave solutions with speed $\omega$ obtained from the minimizers of \eqref{mp} by $\tilde {\mathcal{G}}_\omega$.
\end{defn}
We remark that  \eqref{trav-eqs} is invariant under that the reflection $x \rightarrow -x$, which may imply that the solutions may be even in $x$. On the other hand, under certain conditions, Bona and Chen \cite{BC2002} established the existence of even traveling wave solutions of \eqref{int_29e} by employing the method in \cite{Krasno1, Krasno2} and assuming the solutions are inherently even by construction.

%%%%%%%%%%%%%%%%%%%%%%%%sec3%%%%%%%%%%%%%%%%%%%%%%%

\section{The KdV scaling for the generalized $abcb$-Boussinesq  system}\label{sec3}
In this section, we present some auxiliary lemmas that are paramount to proving the main result of this article.  We will see that a renormalized family of solitary-wave solutions of the generalized $abcb$-Boussinesq system converges to nontrivial solitary-wave solutions for the generalized KdV equation, assuming the speed velocity $\omega$ close to $1^-$ as $\ep \to 0^+$ with $ b \leq \sqrt{ac}$ and balancing the effects of nonlinearity and dispersion\footnote{This phenomenon was characterized also for solitary-wave solutions of $2D$ Boussinesq-Benney-Luke system in  \cite{DPDE2013}, where the authors used the characterization of solitary-wave solutions for the (KP-I) model given in \cite{DBJCb}.}.

Set $\epsilon>0$, $\omega^{2}=1- \epsilon^{\frac{2}{p+1}}$ and, for a given couple $(\psi,v)\in X$,  consider the following scaling %in function of  $z$ and $w$
\begin{equation}\label{defuz}
\psi(x)=\epsilon^{\frac1{(p+1)(p+2)}}z(y), \ \ \ v(x)=\epsilon^{\frac1{(p+1)(p+2)}}w(y) \ \ \  \text{with} \
y=\epsilon^{\frac1{p+1}}x.
\end{equation}
Now, define the following quantities,
\begin{equation}\label{i1}
I^\epsilon(z,w)=I^{1,\epsilon}(z,w)+I^{2,\epsilon}(z,w),
\end{equation}
and
\begin{equation}\label{g_a}
G(z,w)=\frac{2}{p+1}\int_{\mathbb{R}}z w^{p+1}\,dy.
\end{equation}
Here
\begin{equation}\label{i1e}
I^{1,\epsilon}(z,w)=\int_{\mathbb{R}}\left(\epsilon^{-\frac2{p+1}}z^2- c (z')^2+ \epsilon^{-\frac2{p+1}}w^2- a (w')^2\right)dy
\end{equation}
and
\begin{equation}\label{i2e}
I^{2,\epsilon}(z,w)=-2 \omega \int_{\mathbb{R}}(\epsilon^{-\frac2{p+1}}zw+b z' w')\,dy.
\end{equation}

Straightforward calculations give us the following relations:
\begin{equation*}
I_1(\psi,v)=\epsilon^{\frac{p+4}{(p+1)(p+2)}}I^{1,\epsilon}(z,w),
\end{equation*}
\begin{equation*}
I_{2,\omega}(\psi,v)=\epsilon^{\frac{p+4}{(p+1)(p+2)}}I^{2,\epsilon}(z,w),
\end{equation*}
\begin{equation*}
I_{\omega}(\psi,v)=\epsilon^{\frac{p+4}{(p+1)(p+2)}}I^\epsilon(z,w),
\end{equation*}
and
\begin{equation*}
G(\psi ,v)=G^{\epsilon}(z,w)=G(z,w),
\end{equation*}
where $I^{1,\epsilon}$, $I^{2,\epsilon}$, $I^\epsilon$ and $G$ are given by \eqref{i1e}, \eqref{i2e}, \eqref{i1} and \eqref{g_a},  respectively.

Under relations \eqref{relation_c}, there exists a family
$\left\{(\psi_{\omega},v_{\omega})\right\}_{\omega}\,$ such that
$$
I_{\omega}(\psi_{\omega},v_{\omega})=\mathcal I_{\omega},  \  \  G(\psi_{\omega},v_{\omega})=-1.  \ \
$$
Then, if we denote
$$\mathcal I^\epsilon:=\inf\left\{I^\epsilon(z,w) \   :    \  (z,w)\in X\,\,\,\,
\text{with} \ \  \  G(z,w)=-1 \right\},
$$
there is a corresponding family
$\{(z^\epsilon,w^\epsilon)\}_\epsilon$ such that
\begin{equation*}
\mathcal I^\epsilon=I^\epsilon(z^\epsilon,w^\epsilon), \ \  \
G(z^\epsilon,w^\epsilon)=-1,
 \ \  \  \mathcal I_{\omega}=\epsilon^{\frac{p+4}{(p+1)(p+2)}}\mathcal I^\epsilon\,.
\end{equation*}
We also have that  $(z^\epsilon,w^\epsilon)$ is a solution of the system
\begin{equation}
\left\{
\begin{array}{rl}
\epsilon^{-\frac2{p+1}}(w-\omega z)+\omega bz''+aw''+\left(\frac{2}{p+2}\right) \mathcal I^{\ep}  z
w^{p}& =0, \\
\\ \label{trav-ep}
\epsilon^{-\frac2{p+1}}(z-\omega w)+\omega bw''+cz''+\left(\frac2{(p+1)(p+2)}\right)  \mathcal I^{\ep} w^{p+1}&=0.
\end{array}\right.
\end{equation}
%The main result of this section can be read as follows:
%\begin{thm}\label{claim}
%The family  $\{(z^\epsilon,w^\epsilon)\}_\epsilon$ is related with solitary-wave solutions
%for the generalized KdV equation, as $\ep \to 0$.
%\end{thm}

%The previous theorem will be proved in several steps. To do that, we need to
Now, let us define in $X$ the following two functionals
\begin{equation}\label{Je}
J^{\ep}(w)= I^{\ep}(\omega w, w):=\int_{\mathbb{R}}\epsilon^{-\frac2{p+1}}(1-\omega^{2})w^{2}dy
+\int_{\mathbb{R}}-\left((2b+c)\omega^{2}+a\right)(w')^{2}dy
\end{equation}
and
\begin{equation}\label{We}
K^{\ep}(w)= G^{\epsilon}(\omega w, w).
\end{equation}
Define the following number $\J^{\ep}$
\begin{equation}\label{JJe}
\J^{\ep}= \inf \{J^{\ep}(w): w \in H^1(\R), \ K^{\ep}(w)=-1 \}\, ,
\end{equation}
where $ {\mathcal I^\epsilon} \leq \J^{\ep} $, and need to keep in mind the following quantity defined by \eqref{i1}:
\begin{equation}\label{I-ep}
\begin{split}
I^{\ep}(z, w)=& \int_{\mathbb R}\left(\epsilon^{-\frac2{p+1}}(z-\omega(\ep) w)^2+(1-\omega^2(\ep))\epsilon^{-\frac2{p+1}}w^2 \right)dy \\
&+\int_{\mathbb R}\left(|c|\left(z'  -  \frac{b\omega(\ep)}{|c|} w'\right)^2 +\left(\frac{ac-b^2 \omega^2(\epsilon)}{|c|}\right) (w')^2\right)\,dy.
\end{split}
\end{equation}
Let us give some behavior, as $\ep \to 0$, for the functional \eqref{We} and the number \eqref{JJe}.
\begin{lem}\label{limite} Considering the functionals \eqref{Je} and \eqref{We}, it follows that
\begin{equation}\label{lim}
\lim_{\epsilon\rightarrow0^+}K^\epsilon(w^\epsilon)=\lim_{\epsilon\rightarrow0^+}G^{\ep}(\omega(\ep) w^\epsilon, w^\epsilon)=-1
\end{equation}
and
\begin{equation}\label{lim_2}
\lim_{\epsilon\rightarrow0^+}\mathcal
I^\epsilon=\lim_{\epsilon\rightarrow0^+}J^\epsilon(w^\epsilon)=\mathcal J^0>0\, ,
\end{equation}
where
\begin{equation*}
\begin{split}
\mathcal J^{0} &= \inf \{J^0(w): w \in H^1(\R), \ K^{0}(w)=-1 \},\\
J^{0}(w) &=\int_{{\mathbb{R}}} \left(w^2 + \left(\sigma - \tfrac{1}{3}\right)w_{x}^2 \right)dy,  \\
K^{0}(w) & =  \frac 2 {p+1} \int_{{\mathbb{R}}} w^{p+2} dy.
\end{split}
\end{equation*}
\end{lem}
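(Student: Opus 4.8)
The plan is to exploit the scaling identities already established, together with the uniform coercivity of $I^\epsilon$ on the constraint set, to extract strong limits and then pass to the limit in each functional. First I would record the key a priori bound: by the same computation that produced \eqref{IG1}, for $\epsilon$ small the quadratic form $I^\epsilon$ dominates the scaled $H^1$-norm $\|(z,w)\|_{\epsilon}^2 := \int (\epsilon^{-2/(p+1)}(z^2+w^2) + (z')^2 + (w')^2)\,dy$ up to a constant independent of $\epsilon$ (this uses $b\le\sqrt{ac}$, so that the coefficient $(ac-b^2\omega^2(\epsilon))/|c|$ in \eqref{I-ep} stays bounded below by a positive constant as $\omega(\epsilon)\to 1^-$); hence the minimizers $(z^\epsilon,w^\epsilon)$ satisfy $\int ((z^\epsilon)^2+(w^\epsilon)^2)\,dy = O(\epsilon^{2/(p+1)})$ and $\int ((z^\epsilon)'^2 + (w^\epsilon)'^2)\,dy = O(1)$. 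In particular $z^\epsilon - \omega(\epsilon)w^\epsilon \to 0$ strongly in $L^2$ after the $\epsilon^{-2/(p+1)}$ weighting is accounted for, which is the heuristic reason the profile collapses onto the diagonal $z = \omega w$ and, in the limit, onto $z = w$.

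Next I would compare $\mathcal I^\epsilon$ with $\mathcal J^\epsilon$. The inequality $\mathcal I^\epsilon \le \mathcal J^\epsilon$ is immediate since diagonal pairs $(\omega w, w)$ are admissible competitors for the problem defining $\mathcal I^\epsilon$ once one rescales to put $G$ on the constraint; conversely, using the decomposition \eqref{I-ep} one shows $I^\epsilon(z^\epsilon,w^\epsilon) \ge J^\epsilon(w^\epsilon) - o(1)$ because the cross terms and the $(z'-\tfrac{b\omega}{|c|}w')^2$ piece are nonnegative and the discrepancy between $G^\epsilon(z^\epsilon,w^\epsilon) = -1$ and $K^\epsilon(w^\epsilon)$ tends to $0$ (this is where \eqref{lim} comes from: $G^\epsilon(z^\epsilon,w^\epsilon) - G^\epsilon(\omega w^\epsilon, w^\epsilon) = \tfrac{2}{p+1}\int (z^\epsilon - \omega w^\epsilon)(w^\epsilon)^{p+1}\,dy$, which is controlled by $\|z^\epsilon - \omega w^\epsilon\|_{L^2}\|w^\epsilon\|_{L^{2(p+1)}}^{p+1}$, and the first factor is $o(1)$ in the appropriate scaling while the second is bounded via Gagliardo–Nirenberg by the uniform bounds above). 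Combining, $\mathcal I^\epsilon = \mathcal J^\epsilon + o(1)$ and $\lim K^\epsilon(w^\epsilon) = -1$.

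It then remains to identify $\lim_{\epsilon\to0^+}\mathcal J^\epsilon = \mathcal J^0$. Writing $\epsilon^{-2/(p+1)}(1-\omega^2(\epsilon)) = 1$ by the very choice $\omega^2 = 1 - \epsilon^{2/(p+1)}$, the functional $J^\epsilon(w)$ in \eqref{Je} becomes exactly $\int (w^2 - ((2b+c)\omega^2+a)(w')^2)\,dy$, whose coefficient $-((2b+c)\omega^2(\epsilon)+a) \to -(2b+c+a) = -(a+2b+c)$; recalling $a+2b+c = a+b+c+d = \tfrac13 - \sigma$ in the present $b=d$ regime, this coefficient converges to $\sigma - \tfrac13$, so $J^\epsilon \to J^0$ coefficientwise, and similarly $K^\epsilon(w) = \tfrac{2}{p+1}\int \omega(\epsilon) w^{p+2}\,dy \to \tfrac{2}{p+1}\int w^{p+2}\,dy = K^0(w)$. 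The convergence of the infima then follows by the standard two-sided argument: for the $\limsup$, insert a near-optimizer of $\mathcal J^0$ (rescaled to meet the $K^\epsilon$-constraint, which costs only a factor tending to $1$) into $J^\epsilon$; for the $\liminf$, use the uniform coercivity to show $\{w^\epsilon\}$ is bounded in $H^1$, extract a weak limit $w^0$, rule out $w^0 \equiv 0$ via the constraint $K^\epsilon(w^\epsilon) = -1$ together with a concentration-compactness / non-vanishing argument (translating the $w^\epsilon$ so that they do not spread out — this is legitimate since $J^\epsilon$ and $K^\epsilon$ are translation invariant), and pass to the limit using weak lower semicontinuity of $J^0$ and the compact Sobolev embedding on bounded sets to get $K^0(w^0) = -1$, hence $\liminf \mathcal J^\epsilon \ge J^0(w^0) \ge \mathcal J^0$. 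Positivity of $\mathcal J^0$ is then clear because $J^0$ is coercive (as $\sigma - \tfrac13 > 0$ under the standing hypotheses forcing $2b < -a-c$, i.e. $\sigma > \tfrac13$) and the constraint set is nonempty.

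\textbf{Main obstacle.} The delicate point is the $\liminf$ half: one must prevent the rescaled minimizers $w^\epsilon$ from vanishing (spreading to infinity with vanishing amplitude) as $\epsilon\to 0^+$, since a priori the $\epsilon$-dependent weight $\epsilon^{-2/(p+1)}$ in front of the $L^2$ term could degenerate the control. The resolution is exactly the observation that $\omega^2(\epsilon) = 1 - \epsilon^{2/(p+1)}$ makes $\epsilon^{-2/(p+1)}(1-\omega^2(\epsilon)) \equiv 1$, so $J^\epsilon$ is in fact a $\epsilon$-\emph{uniformly} equivalent $H^1(\mathbb R)$-norm square (given $a+2b+c < 0$), and then a routine Lions concentration-compactness argument applied to the minimizing sequence of the $\epsilon$-independent-in-strength problem rules out vanishing and dichotomy; the local compact embedding then upgrades weak to strong convergence and yields both $\mathcal J^0 > 0$ and the sharp limit.
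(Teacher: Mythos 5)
Your overall strategy coincides with the paper's: the upper bound $\limsup_{\ep\to 0^+}\mathcal J^\ep\le \mathcal J^0$ by inserting rescaled near-minimizers of $\mathcal J^0$ as competitors, the identity \eqref{lim} via the $L^2$-smallness of $z^\ep-\omega(\ep)w^\ep$ extracted from the decomposition \eqref{I-ep}, and the lower bound via concentration--compactness applied to the minimizing family $(z^{\ep_j},w^{\ep_j})$ (this is exactly Claim I, proved in Appendix \ref{B}). However, two of your intermediate assertions are false as stated. First, $I^\ep$ does \emph{not} dominate $\int\bigl(\ep^{-2/(p+1)}(z^2+w^2)+(z')^2+(w')^2\bigr)\,dy$: in \eqref{I-ep} the weight $\ep^{-2/(p+1)}$ sits only on $(z-\omega(\ep)w)^2$, while the coefficient of $w^2$ is $(1-\omega^2(\ep))\ep^{-2/(p+1)}=1$ (take $z=\omega w$ with $\|w\|_{H^1}=1$ to see the claimed domination fail). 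Consequently your bound $\int\bigl((z^\ep)^2+(w^\ep)^2\bigr)\,dy=O(\ep^{2/(p+1)})$ cannot hold: it would force $\|w^\ep\|_{L^{p+2}}\to 0$ by Gagliardo--Nirenberg and hence $G(z^\ep,w^\ep)\to 0$, contradicting the constraint $G=-1$. The correct statements, which are all the rest of your argument actually needs, are $\|z^\ep\|_{H^1},\|w^\ep\|_{H^1}=O(1)$ together with $\|z^\ep-\omega(\ep)w^\ep\|_{L^2}=O(\ep^{1/(p+1)})$, as in \eqref{ccccc}.

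Second, the inequality $I^\ep(z^\ep,w^\ep)\ge J^\ep(w^\ep)-o(1)$ does not follow from the nonnegativity of the squares in \eqref{I-ep} alone. Dropping the term $|c|\bigl(z'-\tfrac{b\omega}{|c|}w'\bigr)^2$ leaves the coefficient $(ac-b^2\omega^2)/|c|$ in front of $(w')^2$, which falls short of the coefficient $-\bigl((2b+c)\omega^2+a\bigr)$ appearing in $J^\ep$ by the amount $\omega^2(|c|-b)^2/|c|\ge 0$; the two agree only when $b=|c|$. To close this gap you need $\|(z^\ep)'-\omega(\ep)(w^\ep)'\|_{L^2}\to 0$, which is precisely what the strong $H^1$-convergence from the concentration--compactness step provides; so the compactness argument must come first, and the identification of the limits should be run as in the paper, namely $\mathcal J^0\le J^0(w^0)\le\liminf_j I^{\ep_j}(z^{\ep_j},w^{\ep_j})\le\limsup_{\ep\to0^+}\mathcal I^\ep\le\mathcal J^0$, followed by $\mathcal I^\ep\le\mathcal J^\ep$ to conclude $\mathcal J^\ep\to\mathcal J^0$, rather than through the pointwise comparison you propose. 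With these two repairs your proof reduces to the paper's.
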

\begin{proof}
Let $v\in H^1(\R)$ satisfy $K^0(v)=-1$ and define $$\alpha=(\omega(\ep))^{-\frac1{p+2}}\, .$$ Then, for such a $v$,  we have that $K^{\ep}(\alpha v)=-1$ and thus,
\begin{equation}\label{CQS}
J^{\ep}(\alpha v)=\alpha^2 J^{\ep}(v)\geq \mathcal J^{\ep}.
\end{equation}
Now, we note that $\lim_{\ep \to 0^+} |\alpha|=1$.  On the other hand, using $\omega^{2}=1- \epsilon^{\frac{2}{p+1}}$ and that $c+a+2b=\frac13-\sigma$, we conclude the following
\begin{equation*}
\begin{split}
\lim_{\ep \to 0^{+}}J^{\ep}(v)=&\lim_{\ep \to 0^{+}}\int_{\mathbb{R}}\epsilon^{-\frac2{p+1}}(1-\omega^{2})v^{2}dy+\lim_{\ep \to 0^{+}}\int_{{\mathbb{R}}} \left((-c-2b)\omega(\ep)^2-a\right)(v')^2\,dy\\
= &\int_{\R}(v^2 + \left(\sigma-\frac13\right) (v')^2)\,dy = J^0(v).
\end{split}
\end{equation*}
Consequently, \eqref{CQS} implies that
\begin{equation}\label{RR}
\mathcal J^0 \geq \limsup_{\ep \to 0} \mathcal J^{\ep} \quad \text{and} \quad \mathcal{J}^0\geq \limsup _{\epsilon \rightarrow 0^{+}} \mathcal{I}^\epsilon .
\end{equation}
%Moreover, for $\ep>0$ small, we see that
%\[
%\epsilon^{-\frac{5}{2}} \mathcal I_{\ep, p}= \mathcal J_{\ep, p} \leq 2 \mathcal J_0.
%\]
Now, observe that
$$
\lim _{\epsilon \rightarrow 0^{+}} K^0\left(w^\epsilon\right)  =\lim _{\epsilon \rightarrow 0^{+}}\frac 2 {p+1}  \omega \int_{\mathbb{R}}\left(w^\epsilon\right)^{p+2} dy =\lim _{\epsilon \rightarrow 0^{+}} G^\epsilon\left(\omega w^\epsilon, w^\epsilon\right).
$$
We claim that
$$
\lim _{\epsilon \rightarrow 0^{+}} K^0\left(w^\epsilon\right)=-1
$$
To prove it, we need to show that
\begin{equation}\label{R1}
\lim _{\epsilon \rightarrow 0^{+}} G^\epsilon\left(z^\epsilon, w^\epsilon\right)=\lim _{\epsilon \rightarrow 0^{+}} G^\epsilon\left(\omega w^\epsilon, w^\epsilon\right),
\end{equation}
since  $$\lim _{\epsilon \rightarrow 0^{+}} G^\epsilon\left(z^\epsilon, w^\epsilon\right)=-1.$$
Note that \eqref{R1} is equivalent to prove that
\begin{equation}\label{R2}
\lim _{\epsilon \rightarrow 0^{+}}\left|\int_{\mathbb{R}}(z^{\epsilon}-\omega(\epsilon)w^{\epsilon})(w^{\epsilon})^{p+1}dy\right|=0.
\end{equation}
To show \eqref{R2}, we note that
\begin{equation*}
\begin{split}
\left|\int_{\mathbb{R}}(z^{\epsilon}-\omega(\epsilon)w^{\epsilon})(w^{\epsilon})^{p+1}dy\right|\leq &C\left\|z^{\epsilon}-\omega(\epsilon)w^{\epsilon}\right\|_{L^{2}(\mathbb{R})}\left\|w^{\epsilon}\right\|^{p+1}_{H^{1}(\mathbb{R})}\, ,
%\leq& C\left\|z^{\epsilon}-\omega(\epsilon)w^{\epsilon}\right\|_{H^{1}(\mathbb{R})}\left\|w^{\epsilon}\right\|^{p+2}_{H^{1}(\mathbb{R})},
\end{split}
\end{equation*}
since \eqref{I-ep} together with $b^2 < ac $ implies that
\begin{equation}
\begin{cases}\label{ccccc}
 &\left\|z^{\epsilon}-\omega(\epsilon)w^{\epsilon}\right\|_{L^2(\mathbb{R})}=O(\epsilon^{\frac{1}{p+1}}),\quad \text{and}\\ &\left \| w^{\epsilon}\right\|_{H^{1}(\mathbb{R})}\, ,\left \| z^{\epsilon}\right\|_{H^{1}(\mathbb{R})}\  \mbox{are uniformly bounded}\, ,
 \end{cases}
\end{equation}
\color{black}which ensures that \eqref{R2} holds when $\epsilon \rightarrow 0^{+}$. Thus, we conclude that
$$
\lim _{\epsilon \rightarrow 0^{+}} K^0\left(w^\epsilon\right)=\lim _{\epsilon \rightarrow 0^{+}} K^\epsilon\left(w^\epsilon\right)=\lim _{\epsilon \rightarrow 0^{+}} G^\epsilon\left(z^\epsilon, w^\epsilon\right)=-1,
$$
showing \eqref{lim}. If $\epsilon$ is small enough, it is obtained that $K^0\left(w^\epsilon\right) \neq 0$, and
$$
\mathcal{J}^0 \leq J^0\left(\frac{w^\epsilon}{-K^0\left(w^\epsilon\right)^{\frac{1}{p+2}}}\right)=\frac{J^0\left(w^\epsilon\right)}{|K^0\left(w^\epsilon\right) | ^{-\frac{2}{ p+2}}}\, ,
$$
together with
$$J^\epsilon\left(w^\epsilon\right)-J^0\left(w^\epsilon\right)=o(1)\, .$$
Next, we note that  \eqref{I-ep} is
\begin{equation*}
\begin{split}
{\mathcal{I}}^\epsilon = &I^{\ep} (z^\ep , w^\ep )\\=& \int_{\mathbb R}\left(\epsilon^{-\frac2{p+1}}\left (z^\ep-\omega(\ep) w^\ep\right )^2+ \left ( w^\ep\right )^2 \right)dy \\
&+\int_{\mathbb R}\left(|c|\left(\left (z^\ep\right ) '  -  \frac{b\omega(\ep)}{|c|} \left ( w^\ep\right ) '\right)^2 +\left(\frac{ac-b^2 \omega^2(\epsilon)}{|c|}\right) \left (\left ( w^\ep\right )'\right )^2\right)\,dy\\
 \leq & J^\epsilon \left ( w^\epsilon \right ),
 \end{split}
\end{equation*}
where $\limsup_{\ep \rightarrow 0^+} {\mathcal{ I}}^\epsilon \leq {\mathcal{J}}^0  \  \mbox{and} \ \ G(z^\ep , w^\ep ) = -1$.

Now, we consider $\liminf_{\ep \rightarrow 0^+} {\mathcal{I}}^\epsilon $, which gives that there is a sequence $\{\ep_j\}\rightarrow 0^+ $ such that
$$
\lim_{j \rightarrow\infty } {\mathcal{I}}^{\epsilon_j} = \lim_{j \rightarrow\infty } I^{\ep_j} \left (z^{\ep_j} , w^{\ep_j} \right ) = \liminf_{\ep \rightarrow 0^+} {\mathcal{I}}^\epsilon \leq {\mathcal{J}}^0 \quad \mbox{with} \quad \ G\left (z^{\ep_j} , w^{\ep_j} \right ) = -1.
$$
For this sequence $\left (z^{\ep_j} , w^{\ep_j} \right )$, the following claim holds.
\smallskip

\noindent{\bf Claim I:} {\it The sequence of minimizers $\left (z^{\ep_j} , w^{\ep_j} \right )$ for $I^{\ep_j} (z , w)$ with $G\left (z , w \right ) = -1$ has a subsequence of $\left (z^{\ep_j} , w^{\ep_j} \right )$ up to some translation in $x$ (still using the same notation for the translated subsequence) that converges to  $\left (z^{0} , w^{0} \right )$ in $ H^1 (\mathbb{R}) \times H^1 (\mathbb{R})$.}
\smallskip

\noindent The proof of Claim I is given in Appendix \ref{B}. Thus, from this claim, \eqref{ccccc} implies that $z^0 = w^0$ and $$\left (z^{\ep_j} , w^{\ep_j} \right )\rightarrow \left (w^{0} , w^{0} \right ) \ \text{in} \ \ H^1 (\mathbb{R}) \times H^1 (\mathbb{R}).$$ Moreover, $$K^0 (w^0 ) = \lim_{j \rightarrow \infty} G\left (z^{\ep_j} , w^{\ep_j} \right ) = -1$$ and
\begin{align*}
{\mathcal{J}}^0 \leq& J^0( w^0 ) \\=&  \lim_{j \rightarrow \infty} \int_{\mathbb R}\Bigg ( \left (w^{\ep_j}\right )^2  + |c|\left(\left (z^{\ep_j} \right ) '  -  \frac{b\omega(\ep_j)}{|c|} \left (w^{\ep_j}\right ) '\right)^2 \\
& +\left(\frac{ac-b^2 \omega^2(\epsilon_j)}{|c|}\right) \left (\left ( w^{\ep_j}\right )'\right )^2\Bigg)\,dy\\
\leq &\lim_{j \rightarrow\infty } I^{\ep_j} \left (z^{\ep_j} , w^{\ep_j} \right ) = \liminf_{\ep \rightarrow 0^+} {\mathcal{I}}^\epsilon \leq \limsup_{\ep \rightarrow 0^+} {\mathcal{I}}^\epsilon \leq {\mathcal{J}}^0\, ,
\end{align*}
where \eqref{RR} has been used.
\color{black}
Hence, we have that
$$
\lim _{\epsilon \rightarrow 0^{+}} \mathcal{I}^\epsilon=\mathcal{J}^0\, .
$$
Finally, for any $w\in H^1(\mathbb{R})$ satisfying $K^\epsilon(w)=-1$, it is known that $\mathcal{I}^\epsilon \leq J^\epsilon(w)$ due to the fact that  $\mathcal{I}^\epsilon \leq \mathcal{J}^\epsilon$ and \eqref{JJe}. Thus, we get that \color{black}
$$
\mathcal{J}^0=\lim _{\epsilon \rightarrow 0^{+}} \mathcal{I}^\epsilon \leq \liminf _{\epsilon \rightarrow 0^{+}} \mathcal{J}^\epsilon.
$$
Again, using \eqref{RR}, we conclude that
$$
\lim _{\epsilon \rightarrow 0^{+}} \mathcal{J}^\epsilon=\mathcal{J}^0,
$$
showing \eqref{lim_2}, and the lemma is achieved.
\end{proof}

Before we go further, we characterize the solitary-wave solutions for the generalized KdV equation. In the one-dimensional case, the following result is a consequence of the results shown in \cite{DBJCb}, where the authors characterize the solitary-wave solutions for the (KP-I) model in the $d$-dimensional case.
\begin{thm}\label{tkdv}
Let $\{w_m\}_{m\geq0}$ be a minimizing sequence for $\mathcal J^0$ given by Lemma \ref{limite}. Then,  there exists a sequence of points $(y_n)_m \subset \R$ and a subsequence, which will be denoted by the same index, and a nonzero $w_0 \in H^1(\R)$ such that $J^0(w_0) = \mathcal J^0,$ and $$w_m(\cdot + y_m) \to w_0 \in H^1(\R).$$ Moreover, $w_0$ is a solution to the equation
\begin{equation}\label{kdv-t}
w_0+\left(\frac13-\sigma\right) w_{0xx}+\frac{2}{(p+1)}\mathcal J^0 w_0^{p+1}=0\, .
\end{equation}
  Therefore, $\tilde w=\left(\frac2{p+1} \mathcal J^0 \right)^{\frac1{p}}w_0$ is a nontrivial solitary wave solution for the generalized KdV equation \eqref{kdv}, i.e.,
\begin{align*}
  \tilde w+\left(\frac13-\sigma\right) \tilde w_{xx}+ \tilde w^{p+1}=0\, .
\end{align*}
\end{thm}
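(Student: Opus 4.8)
\textbf{Proof proposal for Theorem \ref{tkdv}.}

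The plan is to combine the concentration--compactness machinery of Theorem \ref{lions} with the behaviour of $J^0$ and $K^0$ established in Lemma \ref{limite}. First I would set up the variational problem for $\mathcal J^0$ directly: given a minimizing sequence $\{w_m\}$ with $K^0(w_m)=-1$ and $J^0(w_m)\to\mathcal J^0$, the coercivity of $J^0$ (which holds because $\sigma>1/3$ in the regime under consideration, so that $J^0(w)=\int (w^2+(\sigma-\tfrac13)w_x^2)\,dy$ is equivalent to $\|w\|_{H^1}^2$) gives a uniform $H^1$ bound on $\{w_m\}$. Then I would introduce the concentration measures $d\nu_m=(w_m^2+(w_m')^2)\,dy$, apply Theorem \ref{lions}, and rule out vanishing and dichotomy. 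Vanishing is excluded because it would force $\|w_m\|_{L^{p+2}}\to 0$ (by the standard lemma interpolating the $L^{p+2}$ norm through the local $L^2$ mass, using the $H^1$ bound), contradicting $K^0(w_m)=-1$; dichotomy is excluded by the usual subadditivity/strict-subadditivity argument for the homogeneous functional, i.e. the scaling $\mathcal J^0_\theta=\theta^{2/(p+2)}\mathcal J^0$ in the mass $\theta$ is strictly subadditive so a split minimizing sequence cannot achieve the infimum. Hence compactness holds, and after translating by the points $y_m$ furnished by Theorem \ref{lions}, a subsequence of $w_m(\cdot+y_m)$ converges weakly in $H^1$ and strongly in $L^2_{\mathrm{loc}}$; the tightness from compactness upgrades this to strong convergence in $L^{p+2}(\mathbb R)$, so the limit $w_0$ satisfies $K^0(w_0)=-1$, hence $w_0\neq 0$, and by weak lower semicontinuity $J^0(w_0)\le\mathcal J^0$, forcing $J^0(w_0)=\mathcal J^0$ and in fact strong convergence in $H^1$.

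Next I would derive the Euler--Lagrange equation. Since $w_0$ minimizes $J^0$ subject to $K^0=-1$, the Lagrange multiplier theorem gives $\lambda$ with $J^{0\prime}(w_0)=\lambda K^{0\prime}(w_0)$, i.e. $w_0+(\tfrac13-\sigma)w_{0xx}=\lambda\,\tfrac{2(p+2)}{2(p+1)}\cdot\!\cdots$ — more precisely, pairing with $w_0$ and using homogeneity ($\langle J^{0\prime}(w_0),w_0\rangle=2J^0(w_0)=2\mathcal J^0$ and $\langle K^{0\prime}(w_0),w_0\rangle=(p+2)K^0(w_0)=-(p+2)$) identifies the multiplier as $\lambda=-2\mathcal J^0/(p+2)$. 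Substituting back and rescaling the constant in front of the nonlinear term yields exactly \eqref{kdv-t}, namely $w_0+(\tfrac13-\sigma)w_{0xx}+\tfrac{2}{p+1}\mathcal J^0 w_0^{p+1}=0$. Finally, the scaling $\tilde w=(\tfrac{2}{p+1}\mathcal J^0)^{1/p}w_0$ converts \eqref{kdv-t} into the normalized solitary-wave equation $\tilde w+(\tfrac13-\sigma)\tilde w_{xx}+\tilde w^{p+1}=0$, which is precisely the traveling-wave profile equation for the generalized KdV equation \eqref{kdv} (with unit speed after the normalization); since $w_0\neq 0$, $\tilde w$ is nontrivial.

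The main obstacle I anticipate is ruling out dichotomy cleanly, because $K^0$ is not sign-definite on splittings in the way one would like: one must verify the strict subadditivity inequality $\mathcal J^0 < \mathcal J^0_\theta+\mathcal J^0_{1-\theta}$ for the infimum as a function of the ``charge'' carried by the two pieces, which here follows from the strict concavity of $\theta\mapsto\theta^{2/(p+2)}$ (valid since $2/(p+2)<1$), but the bookkeeping — transferring the measure splitting of $\nu_m$ into an actual decomposition $w_m=w_m^1+w_m^2+(\text{small})$ with controlled $J^0$ and $K^0$ of each piece, and handling the cross terms in $K^0$ which involve products $(w_m^1)^j(w_m^2)^{p+2-j}$ supported on disjoint annular regions — is the delicate part. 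I would lean on the fact that, as remarked in the paper, this argument is by now standard for one-dimensional dispersive problems (cf.\ \cite{CNS2010,DBJCb}), and invoke the characterization of \cite{DBJCb} for the $d$-dimensional (KP-I) functional specialized to $d=1$, which is exactly the $J^0/K^0$ pair here; the only genuinely new checking is that the coefficient $\sigma-\tfrac13>0$ in our regime makes $J^0$ positive-definite, so that result applies verbatim.
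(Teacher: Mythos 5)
Your proposal is correct and follows exactly the route the paper intends: the paper itself gives no proof of Theorem \ref{tkdv}, stating it as a consequence of the concentration--compactness characterization in \cite{DBJCb} (specialized to one dimension), and your sketch --- coercivity of $J^0$ from $\sigma>\tfrac13$, exclusion of vanishing via the constraint $K^0=-1$, exclusion of dichotomy via strict subadditivity of $\theta\mapsto\theta^{2/(p+2)}$, and the Lagrange-multiplier identification $\lambda=-\tfrac{2}{p+2}\mathcal J^0$ leading to \eqref{kdv-t} --- is precisely that standard argument (and mirrors the paper's own Appendix \ref{B} treatment of the two-component analogue). Your multiplier computation and the rescaling to the normalized solitary-wave equation both check out.
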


We are in a position to prove the main result of this section %prove Theorem \ref{claim}, which will be rewritten below,
which is a consequence of Theorem \ref{tkdv}.  We will see that the translated subsequence of the renormalized sequence $\{\left(z^{\ep_j},w^{\ep_j}\right)\}_j$ converges to a function $w_0$ that satisfies the system \eqref{trav-eqs}. Thus, $w_0$ is a solution of the generalized  KdV equation.

\begin{thm}\label{conver}
For any sequence $\ep_j \to 0^+$ there is a translated subsequence \newline $\{\left(z^{\ep_j},w^{\ep_j}\right)\}_j$,  and a nontrivial $(z_0,w_0)\in X$ such that
\begin{equation}\label{rr}
(z^{\ep_j},w^{\ep_j}) \to (z_0,w_0) \ \ \mbox{in $ X$},\quad \mbox{and}\quad z^{\ep_j}-w^{\ep_j} \to 0,  \mbox{ as $j\to \infty$}.
\end{equation}
Moreover, $(z_0,w_0)$ is a nontrivial solution of the system
\begin{equation*}
\begin{cases}
z_0 =  w_0 \\
w_0+ \left(\frac13-\sigma\right)  w_0'' + \frac{2}{(p+1)} {\mathcal J}^0 w_0^{p+1}=0.
\end{cases}
\end{equation*}
In other words, $z_0 = w_0 \in  H^1(\R)$, with $w_0$ being a traveling wave for the generalized KdV equation \eqref{kdv-t}.
\end{thm}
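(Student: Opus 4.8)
The plan is to assemble three ingredients already in hand: the limiting identities of Lemma~\ref{limite}, the compactness contained in Claim~I (whose proof is deferred to Appendix~\ref{B}), and the variational description of generalized-KdV solitary waves in Theorem~\ref{tkdv}. First I would fix a sequence $\ep_j\to0^+$ and recall that $(z^{\ep_j},w^{\ep_j})$ minimizes $I^{\ep_j}$ on $\{G=-1\}$; passing to a subsequence exactly as in the proof of Lemma~\ref{limite}, I may assume $I^{\ep_j}(z^{\ep_j},w^{\ep_j})=\mathcal{I}^{\ep_j}\to\mathcal{J}^0$. Claim~I then provides a further subsequence and translates $y_j\in\R$ such that, after relabelling, $(z^{\ep_j}(\cdot+y_j),w^{\ep_j}(\cdot+y_j))\to(z_0,w_0)$ strongly in $X=H^1(\R)\times H^1(\R)$ for some $(z_0,w_0)\in X$; this is the first convergence in \eqref{rr}.

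Next I would extract the two remaining conclusions of \eqref{rr}. From the bound $\|z^{\ep_j}-\omega(\ep_j)w^{\ep_j}\|_{L^2}=O(\ep_j^{1/(p+1)})$ in \eqref{ccccc}, together with $\omega(\ep_j)\to1$ and the uniform bound on $\|w^{\ep_j}\|_{L^2}$, I get $z^{\ep_j}-w^{\ep_j}\to0$ in $L^2(\R)$; since the first step gives $z^{\ep_j}-w^{\ep_j}\to z_0-w_0$ in $H^1(\R)$, uniqueness of limits forces $z_0=w_0$, which also gives the second assertion in \eqref{rr}. Using the embedding $H^1(\R)\hookrightarrow L^{p+2}(\R)$ and the uniform $H^1$-bounds, the trilinear form $(z,w)\mapsto\int_{\R}z\,w^{p+1}\,dy$ is continuous on bounded sets, so $G(z_0,w_0)=\lim_{j}G(z^{\ep_j},w^{\ep_j})=-1$; in particular $(z_0,w_0)\neq0$, so the limit is nontrivial.

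It then remains to identify the equation. Setting $w^0:=w_0$ in the chain of inequalities established at the end of the proof of Lemma~\ref{limite},
\[
\mathcal{J}^0\le J^0(w_0)\le\lim_{j\to\infty}I^{\ep_j}(z^{\ep_j},w^{\ep_j})=\liminf_{\ep\to0^+}\mathcal{I}^\ep\le\limsup_{\ep\to0^+}\mathcal{I}^\ep\le\mathcal{J}^0,
\]
I obtain $J^0(w_0)=\mathcal{J}^0$, while $K^0(w_0)=\lim_{j}G(z^{\ep_j},w^{\ep_j})=-1$; hence $w_0$ realizes $\mathcal{J}^0$. Applying the Lagrange multiplier theorem—equivalently, invoking Theorem~\ref{tkdv} for the renormalized minimizing sequence $w^{\ep_j}/(-K^0(w^{\ep_j}))^{1/(p+2)}$, which also converges to $w_0$—there is $\mu\in\R$ with $(J^0)'(w_0)=\mu\,(K^0)'(w_0)$; pairing with $w_0$ and using that $J^0$ is quadratic and $K^0$ is homogeneous of degree $p+2$ gives $2\mathcal{J}^0=-\mu(p+2)$, so $\mu=-\tfrac{2}{p+2}\mathcal{J}^0$, and substituting back turns the Euler--Lagrange identity into exactly \eqref{kdv-t}:
\[
w_0+\Bigl(\tfrac13-\sigma\Bigr)w_0''+\tfrac{2}{p+1}\mathcal{J}^0\,w_0^{p+1}=0.
\]
Combined with $z_0=w_0$ this is the displayed system, and rescaling by $\bigl(\tfrac{2}{p+1}\mathcal{J}^0\bigr)^{1/p}$ produces the solitary wave of the generalized KdV equation \eqref{kdv}, as in Theorem~\ref{tkdv}.

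The genuinely hard part has been front-loaded into Claim~I in Appendix~\ref{B}: one must rule out vanishing and dichotomy for the $I^{\ep_j}$-minimizing sequences in a way that is \emph{uniform as $\ep_j\to0^+$}, so that a single translation realizes strong convergence in $X$ rather than merely weak convergence or concentration on a shrinking window. Granting Claim~I, the rest is bookkeeping: checking that the translates it produces are compatible with $w_0$ realizing $\mathcal{J}^0$ in the third step, and passing to the limit in the nonlinearity $G$; both follow at once from the uniform $H^1$-bounds and the $O(\ep_j^{1/(p+1)})$ estimate in \eqref{ccccc}.
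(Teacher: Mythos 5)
Your argument is correct and follows the same overall route as the paper: strong convergence of the pair via the concentration--compactness statement (Claim~I), the identity $z_0=w_0$ via the $O(\ep_j^{1/(p+1)})$ estimate in \eqref{ccccc}, and a variational identification of the limiting profile. The only genuine divergence is in how the limit equation \eqref{kdv-t} is obtained: you stay entirely at the level of the limiting variational problem, showing $J^0(w_0)=\mathcal J^0$ with $K^0(w_0)=-1$ and then running the Lagrange-multiplier computation (which correctly yields $\mu=-\tfrac{2}{p+2}\mathcal J^0$ and hence \eqref{kdv-t}); this is in substance a re-proof of the relevant part of Theorem~\ref{tkdv}, which the paper simply cites for the $w$-component. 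The paper supplements this with a second, independent derivation: it tests the Euler--Lagrange system \eqref{trav-ep} against smooth functions, adds the two equations so that the singular terms $\ep^{-2/(p+1)}(w^{\ep}-z^{\ep})$ cancel, and passes to the distributional limit using $\ep^{-2/(p+1)}(1-\omega)\to\tfrac12$ and $a+c+2b=\tfrac13-\sigma$. Your variational route buys a shorter argument that avoids handling the singular terms; the paper's distributional route has the advantage of identifying the equation without needing to first certify that $w_0$ attains $\mathcal J^0$, and it sets up the expansion used later in the remark following the theorem. Both are valid, and your bookkeeping (the chain of inequalities pinning down $J^0(w_0)=\mathcal J^0$, and the continuity of $G$ on bounded sets of $X$) is consistent with what is established in Lemma~\ref{limite} and Appendix~\ref{B}.
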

\begin{proof}
Let $\{\ep_j\}_j$ be a sequence of positive number such that $\ep_j \to 0^{+}$, when $j\to\infty$. From Lemma \ref{limite}, we note that $\left(-K^0\left(w^{\ep_j}\right)^{-\frac1{p+2}}w^{\ep_j}\right)_j $ is a minimizing sequence for $\mathcal J^0$ and also that $K^0\left(w^{\ep_j}\right) \to -1.$  Thanks to the previous convergence and Theorem \ref{tkdv}, there exist a translated sequence of $\{ w^{\ep_j}\}_j$ and a nonzero function $w_0\in H^1(\R)$ such that $w^{\ep_j} \to w_0$ in $H^1(\R)$ and $w_0$ is a solution of \eqref{kdv-t}.  Additionally, from \eqref{I-ep} and the uniform boundedness of $\{(z^{\ep_j}, w^{\ep_j})\}_j$ in $H^1(\mathbb{R})$, it is obtained that
$$
\left\|z^{\epsilon_j}-\omega(\epsilon_j) w^{\epsilon_j}\right\|_{L^2\left(\mathbb{R}\right)}=O(\epsilon^{\frac{1}{p+2}}),
$$
which implies that there exists a nontrivial  $z_0\in L^2(\R)$ such that $z^{\ep_j} \to z_0$ in $L^2(\R)$ and $z_0 =w_0$. Moreover, since $(z^{\ep_j}, w^{\ep_j})$ is a minimizer of $I^{{\ep_j}} ( z , w )$ with $G(z, w ) = -1$ and $\lim _{j \rightarrow \infty} I^{{\ep_j}} ( z^{\ep_j}, w^{\ep_j} ) = {\mathcal{J}}_0$, a concentration-compactness argument shows that $z_0 \in H^1(\R)$ and $z^{\ep_j} \to z_0$ in $H^1(\R)$, which gives \eqref{rr}.

Now, considering a test function  $\xi\in C^{\infty}(\mathbb{R})$ and using the system \eqref{trav-ep}, we get
\begin{equation*}
\left\langle\ep^{-\frac{2}{p+2}}_{j}(w^{\ep_j}-\omega(\ep_j) z^{\ep_j})+\omega(\ep_j) b(z^{\ep_j})''+a(w^{\ep_j})'',\xi \right\rangle=-\left\langle\frac{2}{p+2}\mathcal I^{\ep_{j}} z^{\ep_j}(w^{\ep_j})^{p},\xi\right\rangle
\end{equation*}
and
\begin{equation*}
\begin{split}
&\left\langle\ep^{-\frac{2}{p+2}}_{j}(z^{\ep_j}-\omega(\ep_j) w^{\ep_j})+\omega(\ep_{j}) b(w^{\ep_j})''+c(z^{\ep_j})'',\xi \right\rangle=\\
& \qquad\qquad\qquad\qquad\qquad\qquad\qquad\qquad- \left\langle\frac{2}{(p+1)(p+2)} \mathcal I^{\ep_{j}} (z^{\ep_j})^{p+1},\xi\right\rangle.
\end{split}
\end{equation*}
Adding both equations in the previous system, we find that
\begin{equation*}
\begin{split}
\left\langle\ep^{-\frac{2}{p+2}}_{j}(1-\omega(\ep_j))(w^{\ep_j} +z^{\ep_j}) + (b\omega(\ep_j)+a)(w^{\ep_j})''+(b\omega(\ep_j)+c)(z^{\ep_j})'',\xi\right\rangle\\
= -\left\langle\frac{2}{(p+1)(p+2)} \mathcal I^{\ep_j}(z^{\ep_j})^{p+1}+\frac{2}{p+2} \mathcal I^{\ep_j}z^{\ep_{j}}(w^{\ep_j})^{p},\xi\right\rangle.
\end{split}
\end{equation*}
Note that using the first part of the proof yields $z_0 =w_0$,  when $\ep_{j}\to 0^{+}$. Moreover, since $1-\omega^2(\ep_j)=\ep_j^{\frac{2}{p+1}}$ gives that $$\lim_{\ep_{j}\to0^{+}}\ep^{-\frac{2}{p+2}}_{j}(1-\omega(\ep_j))=\frac{1}{2}\, ,$$
thus,
\begin{equation*}
\begin{split}
\lim _{j\to\infty}\left\langle\ep^{-\frac{2}{p+2}}_{j}(1-\omega(\ep_j))(w^{\ep_j}+z^{\ep_j}) + (b\omega(\ep_j)+a)(w^{\ep_j})''+(b\omega(\ep_j)+c)(z^{\ep_j})'',\xi\right\rangle=\\
\left\langle w_{0} +(2b+a+c)w''_{0},\xi\right\rangle.
\end{split}
\end{equation*}
Thanks to the fact that $\mathcal J^{\ep_j} \to \mathcal J^0$, we have
\begin{equation*}
\begin{split}
-\lim _{j\to\infty}\left\langle\frac{2}{(p+1)(p+2)} \mathcal I^{\ep_j}(z^{\ep_j})^{p+1}+\frac{2}{p+2} \mathcal I^{\ep_j}z^{\ep_{j}}(w^{\ep_j})^{p},\xi\right\rangle\\=-\left\langle \frac{2}{(p+1)}\mathcal I^{0}w_{0}^{p+1},\xi\right\rangle.
\end{split}
\end{equation*}
Finally,  putting previous equalities together gives the existence of the non-trivial solution $w_0=z_0$ to the equation
\begin{equation}\label{limit}
w_0 + \left (\frac{1}{3}-\sigma\right )w_0'' +\frac{2}{(p+1)} \mathcal I^{0} w_0^{p+1}=0,
\end{equation}
as desired once we have that $a+c+ 2b=\frac13-\sigma$, showing the result.
\end{proof}

\begin{rem}
We note that $\left(z^{\varepsilon}, w^{\varepsilon}\right)$ is a solution of \eqref{trav-ep}, which can be rewritten as
\begin{equation}
\begin{split}
\epsilon^{-2/(p+1 ) } ( w^\epsilon - z^\epsilon ) &+
\epsilon^{-2/(p+1 ) } ( 1- \omega) z^\epsilon
+ \omega b  z^\epsilon_{yy} \\  &+a   w^\epsilon_{yy} + \frac{2}
{p+2 } {\mathcal I}^\epsilon  \left ( w^\epsilon \right ) ^{p} z^\epsilon= 0 \label{zzz_1}
\end{split}
\end{equation}
and
\begin{equation}
\begin{split}
\epsilon^{-2/(p+1 ) } ( z^\epsilon -  w^\epsilon ) &+
\epsilon^{-2/(p+1 ) } ( 1 - \omega)  w^\epsilon+ \omega b  w^\epsilon_{yy}
\\&  +c z^\epsilon_{yy} + \frac{2}
{(p+1) (p+2) } {\mathcal I}^\epsilon  \left ( w^\epsilon \right) ^{p+1} = 0 \, .\label{zzz_2}
\end{split}
\end{equation}
By the fact that  $ \omega^2  = 1 - \epsilon^{2/(p+1 ) }$ with Lemma \ref{limite} and Theorem \ref{conver}, we have
$$
w^\epsilon\rightarrow w_0 \, ,\quad  z^\epsilon\rightarrow w_0\, ,\quad {\mathcal I}^\epsilon\rightarrow {\mathcal J}^0,\qquad \mbox{as}\quad \epsilon \rightarrow 0\, .
$$
However, since $\epsilon^{-2/(p+1 ) }\rightarrow +\infty$ as $\epsilon \rightarrow 0$, the term
$\epsilon^{-2/(p+1 ) } ( w^\epsilon - z^\epsilon ) $ in \eqref{zzz_1} and \eqref{zzz_2} may not approach to zero. To obtain the limit of $\epsilon^{-2/(p+1 ) } ( w^\epsilon - z^\epsilon ) $,
one needs to derive the next orders of $w^\epsilon, z^\epsilon$, i.e.,
\begin{align}
w^\epsilon = w_0 + w_1 \epsilon^{2/(p+1 ) }\, \qquad \text{and}\ \quad z^\epsilon =  w_0 + z_1 \epsilon^{2/(p+1 ) }\,  ,\label{zzz_3}
\end{align}
where, in general, $w_1 - z_1 $ may not tend to zero as $\epsilon \rightarrow 0$. In fact, the limits of $w_1 $ and $z_1$ as $\epsilon \rightarrow 0$ can be found from \eqref{zzz_1} and \eqref{zzz_2} as well.
By adding \eqref{zzz_1} and \eqref{zzz_2} and using \eqref{zzz_3} and the equation for $w_0$, we can derive one equation for $w_1 $ and $z_1$ in terms of $w_0$. Another equation is directly from \eqref{zzz_1} using \eqref{zzz_3}. Therefore, those
two equations yield the limits of $w_1 $ and $z_1$ as $\epsilon \rightarrow 0$. In this way, we can derive the asymptotic forms of $w^\epsilon$ and $z^\epsilon$ up to any order of $\epsilon$ as $\epsilon \rightarrow 0$.
\end{rem}
\begin{rem}Another interesting fact is that the profiles $\left(\psi_\omega, v_\omega\right)$ can be small or ``large'' for $\omega$ admissible. However, the profile is small when $\omega $ is near $1^-$, which  can be seen from the fact that  $I_{2,\omega}(\psi, v) < 0$ for $\omega$ close to $1^-$ and 
$$\|(\psi_\omega, v_\omega)\|{H^1 \times H^1} \sim I_1(\psi_\omega, v_\omega) \leq I_\omega(\psi_\omega, v_\omega) = \mathcal{I}_\omega = \epsilon^{\frac{p+4}{(p+1)(p+2)}} \mathcal{I}^\epsilon \to 0, \quad \text{as } \omega \to 1^-.$$
Therefore, for $\omega$ sufficiently close to $1^-$, $\|(\psi_\omega, v_\omega)\|_{H^1 \times H^1} $ is small.
\end{rem}

%%%%%%%%%%%%%%%%%%%%%%%%sec4%%%%%%%%%%%%%%%%%%%%%%%

\section{GSS approach}\label{sec4}
Recall that solitary waves are characterized as critical points of a function defined on a suitable space. For our generalized $abcb$-Boussinesq system \eqref{1bbl},  remember that the functional $J_{\omega}: X^*\to X$ is given by \eqref{JJ_w}, where $X^*$ is the dual space of $X$.  Hereafter, a solitary-wave solution (or a traveling-wave solution of finite energy) minimizes the action functional $J_{\omega}$ under some constraints.

It is noted here that we are only interested in the stability of solutions in the set $\tilde {\mathcal {G}}_\omega$ when $\omega $ is near $1^-$, where $\tilde {\mathcal {G}}_\omega$ is the set of the solutions $(\tilde \psi_\omega, \tilde v_\omega)$ of \eqref{trav-eqs} that correspond to the minimizers $(\psi_\omega,  v_\omega)$ of \eqref{mp}.
Now, given any $(\tilde \psi_\omega, \tilde v_\omega)$ of \eqref{trav-eqs}  in $\tilde {\mathcal {G}}_\omega$, define 
\begin{equation}\label{d_w1}
d(\omega) = \frac{p}{2(p+2)}I_\omega(\tilde \psi_\omega, \tilde v_\omega),
\end{equation}
which is well defined due to the fact that using Section \ref{sec2}, if $(\psi_{\omega},v_{\omega})$ is a minimizer of the problem given by \eqref{mp} and ${\mathcal{G}}_{\omega}$ is the set of all such $(\psi_{\omega},v_{\omega})$, then 
 Theorem \ref{wsol} implies 
\begin{equation}
(\tilde \psi_\omega ,\tilde v_\omega )=\left(\frac{2}{p+2}\mathcal{I}_{\omega}\right)^{\frac{1}{p}}(\psi_{\omega},v_{\omega}), \label{zzzzz}
\end{equation}
is solution of \eqref{trav-eqs} and Remark \ref{rems} yields that
 \begin{equation}\label{d_w}
 \begin{split}
d(\omega) 
%\frac{p}{2(p+2)}\left(\frac{2}{p+2}\right)^{\frac{2}{p}}\left (\mathcal I_{\omega}\right ) ^{\frac{p+2}p}
= & \frac{p}{2(p+2)}I_\omega\left(\left(\frac{2}{p+2}\mathcal{I}_{\omega}\right)^{\frac{1}{p}}(\psi_{\omega},v_{\omega})\right)\\
=& \frac{p}{2(p+2)}\left(\frac{2}{p+2}\mathcal{I}_{\omega}\right)^{\frac{2}{p}}I_\omega(\psi_{\omega},v_{\omega}) \\
= & \frac{p}{2(p+2)}\left(\frac{2}{p+2}\right)^{\frac{2}{p}}\left ( I_\omega(\psi_{\omega},v_{\omega})\right ) ^{\frac{p+2}p},
\end{split}
\end{equation}
which is uniquely defined  for any $(\psi_{\omega},v_{\omega}) \in \mathcal{G}_\omega$, i.e., 
independent of $({\tilde \psi}_{\omega}, {\tilde v}_{\omega}) \in \tilde { \mathcal{G}}_\omega$. 

We remark that, as proposed by Shatah \cite{Shatah}, for the study of the stability of the standing waves of nonlinear Klein-Gordon equations, and Grillakis, Shatah, and Strauss \cite[Theorems 2 and 3]{GSS}, considering an abstract Hamiltonian system, the analysis of the stability of solution sets depends upon some properties of the scalar function given by
 \begin{equation}\label{d_w1rr}
 d(\omega)=\mathcal H(\psi,v)+\omega\mathcal{Q}(\psi,v)=\mathcal{F_{\omega}}(\psi,v)=\frac{1}{2}J_{\omega}(\psi,v)\, ,
\end{equation}
where $(\psi,v)$ is in a set of solutions of \eqref{trav-eqs} that makes such $d(\omega)$ well defined.  For the solution set $\tilde {\mathcal{G}}_\omega$ studied here, Remark \ref{rems} implies that the definitions of $d(\omega)$ in \eqref{d_w1} and \eqref{d_w1rr} are same. 
%%%%%%%%%%%%%%%%%%%%%%%%%%%% properties of d %%%%%%%%%%%%%%%%%%%%%%%%%%%%
\subsection{Properties of the scalar function} This subsection is devoted to presenting properties of the scalar function $d(\omega)$\footnote{It is important to point out again that this strategy was originally introduced in \cite{Shatah}, and after that, extended for an abstract framework in \cite{GSS}.}  when  $\omega$ is near $1^-$. From now on, we will use the notation of the previous section, the characterization of $d(\omega)$ given by \eqref{d_w}, and we take into account the relation \eqref{relation_c}. Since $(\psi_\omega, v_\omega) \in \mathcal{G}_\omega$ is uniformly bounded for $\omega $ in a closed interval  of $(0, 1)$ using \eqref{mp}, by the definitions of $I_\omega$ in \eqref{I_w} and  $\mathcal{I}_\omega$ in \eqref{mp}, it is straightforward to show that $\mathcal{I}_\omega$ in \eqref{mp} is continuous, which implies that $d(\omega)$ is continuous for $\omega \in (0, 1)$. Now, the following result gives us a relation for $0<{\omega}_1<{\omega}_2<1$ in terms of $d$.
\begin{lem}\label{estd}
For $0<{\omega}_1<{\omega}_2<1$ and $(\psi_{{\omega}_i},v_{{\omega}_i})\in\mathcal{G}_{{\omega}_i}$,  $i=1,2,$ it follows that
\begin{equation}\label{dwi}
\begin{split}
d({\omega}_1)\leq &d({\omega}_2)-\frac{1}{2}\left(\frac{2}{p+2}\right)^{\frac{2}{p}}(I_{\omega_{2}}(\psi_{\omega_{2}},v_{\omega_{2}}))^{\frac{2}{p}}\left(\frac{w_{2}-w_{1}}{w_{2}}\right)I_{2,\omega_{2}}(\psi_{\omega_{2}},v_{\omega_{2}})\\&+O(({\omega}_2-{\omega}_1)^{2})
\end{split}
\end{equation}
and
\begin{equation}\label{dwii}
\begin{split}
d({\omega}_2)\leq& d({\omega}_1)+\frac{1}{2}\left(\frac{2}{p+2}\right)^{\frac{2}{p}}(I_{\omega_{1}}(\psi_{\omega_{1}},v_{\omega_{1}}))^{\frac{2}{p}}\left(\frac{w_{2}-w_{1}}{w_{1}}\right)I_{2,\omega_{1}}(\psi_{\omega_{1}},v_{\omega_{1}})\\&+O(({\omega}_2-{\omega}_1)^{2}).
\end{split}
\end{equation}
\end{lem}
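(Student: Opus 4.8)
The plan is to use the minimizer for one wave speed as an admissible competitor in the variational problem for the other speed, exploiting two facts: the constraint $G(\psi,v)=-1$ in \eqref{mp} does not involve $\omega$, and the only $\omega$-dependent part of $I_\omega$ is the term $I_{2,\omega}$, which is \emph{linear} in $\omega$ for fixed $(\psi,v)$.

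\textbf{Step 1 (an exact inequality for $\mathcal{I}_\omega$).} Write $I_{2,\omega}(\psi,v)=\omega\,B(\psi,v)$ with $B(\psi,v)=-2\int_{\mathbb R}(\psi v+b\psi'v')\,dx$. Since $I_1$ does not depend on $\omega$, one gets for every $(\psi,v)\in X$ the identity $I_{\omega_1}(\psi,v)=I_{\omega_2}(\psi,v)-\big(\tfrac{\omega_2-\omega_1}{\omega_2}\big)I_{2,\omega_2}(\psi,v)$. Because $G(\psi_{\omega_2},v_{\omega_2})=-1$, the pair $(\psi_{\omega_2},v_{\omega_2})$ is admissible for \eqref{mp} at speed $\omega_1$, hence $\mathcal{I}_{\omega_1}=I_{\omega_1}(\psi_{\omega_1},v_{\omega_1})\le I_{\omega_1}(\psi_{\omega_2},v_{\omega_2})$, and combining with the identity gives
\[
\mathcal{I}_{\omega_1}\le \mathcal{I}_{\omega_2}-\Big(\tfrac{\omega_2-\omega_1}{\omega_2}\Big)I_{2,\omega_2}(\psi_{\omega_2},v_{\omega_2}).
\]
Interchanging the roles of $\omega_1$ and $\omega_2$ and testing \eqref{mp} at speed $\omega_2$ with $(\psi_{\omega_1},v_{\omega_1})$ yields symmetrically
\[
\mathcal{I}_{\omega_2}\le \mathcal{I}_{\omega_1}+\Big(\tfrac{\omega_2-\omega_1}{\omega_1}\Big)I_{2,\omega_1}(\psi_{\omega_1},v_{\omega_1}).
\]

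\textbf{Step 2 (pass to $d(\omega)$ and Taylor expand).} By \eqref{d_w}, $d(\omega)=C_p\,\mathcal{I}_\omega^{(p+2)/p}$ with $C_p=\frac{p}{2(p+2)}\big(\frac{2}{p+2}\big)^{2/p}$, so that $C_p\cdot\frac{p+2}{p}=\frac12\big(\frac{2}{p+2}\big)^{2/p}$. On a fixed closed subinterval of $(0,1)$ the minimizers $(\psi_\omega,v_\omega)$ are uniformly bounded in $X$ by \eqref{mp} and \eqref{IG1}, so $I_{2,\omega}(\psi_\omega,v_\omega)$ is bounded there and the correction terms $\delta$ appearing in Step~1 are $O(\omega_2-\omega_1)$. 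Applying the first-order expansion $(\mathcal{I}_{\omega_2}+\delta)^{(p+2)/p}=\mathcal{I}_{\omega_2}^{(p+2)/p}+\frac{p+2}{p}\mathcal{I}_{\omega_2}^{2/p}\delta+O(\delta^2)$ to $d(\omega_1)\le C_p(\mathcal{I}_{\omega_2}+\delta)^{(p+2)/p}$ with $\delta=-\big(\tfrac{\omega_2-\omega_1}{\omega_2}\big)I_{2,\omega_2}(\psi_{\omega_2},v_{\omega_2})$, and the analogous expansion about $\mathcal{I}_{\omega_1}$ to the second inequality, produces exactly \eqref{dwi} and \eqref{dwii}, after substituting the value of $C_p\cdot\frac{p+2}{p}$ and recalling $\mathcal{I}_{\omega_i}=I_{\omega_i}(\psi_{\omega_i},v_{\omega_i})$.

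\textbf{Main obstacle.} Essentially everything reduces to bookkeeping; the only delicate point is making the $O((\omega_2-\omega_1)^2)$ remainder legitimate. The second derivative of $t\mapsto t^{(p+2)/p}$ degenerates at $t=0$ when $p>2$, so one needs $\mathcal{I}_\omega$ to stay bounded away from $0$ (and from $\infty$) over the range of speeds considered. This is provided by the continuity and strict positivity of $\omega\mapsto\mathcal{I}_\omega$ on closed subintervals of $(0,1)$ established in Section~\ref{sec2}; with this in hand the Taylor remainder is uniform and the estimates \eqref{dwi}--\eqref{dwii} follow.
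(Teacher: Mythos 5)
Your proposal is correct and follows essentially the same route as the paper: use the minimizer at one speed as an admissible competitor at the other (the constraint $G=-1$ being $\omega$-independent), exploit the linearity of $I_{2,\omega}$ in $\omega$ to write $I_{\omega_1}(\psi_{\omega_2},v_{\omega_2})=I_{\omega_2}(\psi_{\omega_2},v_{\omega_2})+\tfrac{\omega_1-\omega_2}{\omega_2}I_{2,\omega_2}(\psi_{\omega_2},v_{\omega_2})$, and Taylor-expand the power $(p+2)/p$. Your explicit remark on why the $O((\omega_2-\omega_1)^2)$ remainder is uniform (boundedness of $\mathcal{I}_\omega$ away from $0$ and $\infty$ on compact subintervals) is a point the paper leaves implicit, but the argument is the same.
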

\begin{proof}
Due to the definition of $d(\omega)$, given by \eqref{d_w}, we have that
\begin{equation*}
\begin{split}
d({\omega}_1)=&\frac{p}{2(p+2)}\left(\frac{2}{p+2}\right)^{\frac{2}{p}}(I_{\omega_{1}}(\psi_{\omega_{1}},v_{\omega_{1}}))^{\frac{p+2}{p}}\\
\leq &\frac{p}{2(p+2)}\left(\frac{2}{p+2}\right)^{\frac{2}{p}}(I_{\omega_{1}}(\psi_{\omega_{2}},v_{\omega_{2}}))^{\frac{p+2}{p}}\\
%=&\frac{p}{2(p+2)}\left(\frac{2}{p+2}\right)^{\frac{2}{p}}\left(I_{1}(\psi_{\omega_{2}},v_{\omega_{2}})+I_{2,\omega_{1}}(\psi_{\omega_{2}},v_{\omega_{2}})\right)^{\frac{p+2}{p}}\\
=&\frac{p}{2(p+2)}\left(\frac{2}{p+2}\right)^{\frac{2}{p}}\Big (I_{1}(\psi_{\omega_{2}},v_{\omega_{2}})+I_{2,\omega_{2}}(\psi_{\omega_{2}},v_{\omega_{2}})\\
& \qquad\qquad\qquad\qquad -I_{2,\omega_{2}}(\psi_{\omega_{2}},v_{\omega_{2}})+I_{2,\omega_{1}}(\psi_{\omega_{2}},v_{\omega_{2}})\Big)^{\frac{p+2}{p}}\\
=&\frac{p}{2(p+2)}\left(\frac{2}{p+2}\right)^{\frac{2}{p}}\left(I_{{\omega}_2}(\psi_{{\omega}_2},v_{{\omega}_2})+\frac{{\omega}_1-{\omega}_2}{{\omega}_2}I_{2,{\omega}_2}(\psi_{{\omega}_2},v_{{\omega}_2})\right)^{\frac{p+2}{p}},
\end{split}
\end{equation*}
thanks to \eqref{I_w} and to the fact that $I_{2,\omega_{1}}(\psi_{\omega_{2}},v_{\omega_{2}})=\frac{\omega_{1}}{\omega_{2}}I_{2,\omega_{2}}(\psi_{\omega_{2}},v_{\omega_{2}})$. Thus, using Taylor's series of a power function with power $(p+2)/p$ around $\omega_1 - \omega_2$ near zero in the previous inequality, we find
\begin{equation*}
\begin{split}
d({\omega}_1)\leq&\frac{p}{2(p+2)}\left(\frac{2}{p+2}\right)^{\frac{2}{p}}(I_{\omega_{2}}(\psi_{\omega_{2}},v_{\omega_{2}}))^{\frac{p+2}{p}}\\&-\frac{p}{2(p+2)}\left(\frac{2}{p+2}\right)^{\frac{2}{p}}\frac{p+2}{p}(I_{\omega_{2}}(\psi_{\omega_{2}},v_{\omega_{2}}))^{\frac{2}{p}}\left(\frac{w_{2}-w_{1}}{w_{2}}\right)I_{2,\omega_{2}}(\psi_{\omega_{2}},v_{\omega_{2}})\\&+O((w_{2}-w_{1})^{2})\\
=&d(\omega_{2})-\frac{1}{2}\left(\frac{2}{p+2}\right)^{\frac{2}{p}}(I_{\omega_{2}}(\psi_{\omega_{2}},v_{\omega_{2}}))^{\frac{2}{p}}\left(\frac{w_{2}-w_{1}}{w_{2}}\right)I_{2,\omega_{2}}(\psi_{\omega_{2}},v_{\omega_{2}})\\&+O((w_{2}-w_{1})^{2})
\end{split}
\end{equation*}
and the inequality \eqref{dwi} is verified. The proof of \eqref{dwii} is analogous and will be omitted.
\end{proof}

We are now in a position to characterize $d'(\omega)$. However, from Lemma \ref{estd}, we need to evaluate $I_{2, \omega} (\psi_\omega, v_\omega)$ as $\omega \rightarrow \omega_0$. In general, such a limit may not be well-defined and may depend on the choice of $(\psi_{\omega_0}, v_{\omega_0}) \in \mathcal{G}_{\omega_0}$. Moreover, in the worst case, this $(\psi_{\omega_0}, v_{\omega_0}) $ may be a singleton in $ \mathcal{G}_{\omega_0}$ and there is no $(\psi_{\omega}, v_{\omega}) \in \mathcal{G}_{\omega}$ near $ (\psi_{\omega_0}, v_{\omega_0})$ when $\omega$ is sufficiently close to $\omega_0$. Thus, we need to use the local minimizers defined in Remark \ref{yyyyyy} and prove the following lemma.
\begin{lem}\label{prepare}
For any given $\omega_0 \in (0,1) $ close to $1$ and a $(\psi_{\omega_0}, v_{\omega_0}) \in \mathcal{G}_{\omega_0}$, there is a $\delta_0  >0 $ small enough such that $\Delta_0 = (\omega_0 - \delta_0, \omega_0 + \delta_0) \in (0, 1)$ and for every  $\omega \in \Delta_0$, there is a unique 
$\{ (\psi_{\omega}, v_{\omega}) \} \in \mathcal{G}_{\omega}$ (or at least a local minimizer) and $(\psi_{\omega}, v_{\omega})\rightarrow (\psi_{\omega_0}, v_{\omega_0})$ in $H^1(\mathbb{R})\times H^1(\mathbb{R})$ as $\omega\rightarrow \omega_0$.
\end{lem}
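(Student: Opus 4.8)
The plan is to construct the branch by a \emph{localized} constrained minimization, in the spirit of Remark~\ref{yyyyyy}, carried out in a thin tube around the translation orbit of the prescribed minimizer. Write $\mathcal O_{\omega_0}=\{(\psi_{\omega_0}(\cdot+y),v_{\omega_0}(\cdot+y)):y\in\mathbb{R}\}$, fix a small $\rho>0$, put $\mathcal T_\rho=\{(\psi,v)\in X:\operatorname{dist}_X((\psi,v),\mathcal O_{\omega_0})\le\rho\}$, and for $\omega$ near $\omega_0$ set
\[
\widehat{\mathcal I}^{\,\rho}_\omega=\inf\bigl\{I_\omega(\psi,v):(\psi,v)\in\mathcal T_\rho,\ G(\psi,v)=-1\bigr\}.
\]
Since $\omega_0\in(0,1)$ and $b\le\sqrt{ac}$ forces $\min(1,\sqrt{ac}/b)\ge1$, for $\delta_0$ small the interval $\Delta_0$ sits inside $(0,1)$ and the coercivity estimate \eqref{eqINa} holds uniformly for $\omega\in\Delta_0$, so minimizing sequences are $X$-bounded uniformly in $\omega$. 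I would also record that $\omega\mapsto I_\omega(\psi,v)$ is affine with slope $-2\int_{\mathbb{R}}(\psi v+b\psi'v')\,dx$, uniformly bounded on $\mathcal T_\rho\cap\{G=-1\}$, and that $\mathcal I_\omega$ is continuous in $\omega$, as recalled just before the lemma.

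The core step is the behaviour at $\omega=\omega_0$. Since $(\psi_{\omega_0},v_{\omega_0})$ is a global minimizer of \eqref{mp}, one has $\widehat{\mathcal I}^{\,\rho}_{\omega_0}=\mathcal I_{\omega_0}$, attained at the interior point $(\psi_{\omega_0},v_{\omega_0})\in\mathcal O_{\omega_0}$, and the task is to prove the strict boundary inequality
\[
\inf\bigl\{I_{\omega_0}(\psi,v):\operatorname{dist}_X((\psi,v),\mathcal O_{\omega_0})=\rho,\ G(\psi,v)=-1\bigr\}>\mathcal I_{\omega_0}.
\]
If it failed, a minimizing sequence for \eqref{mp} at $\omega_0$ would lie on the sphere $\operatorname{dist}_X(\cdot,\mathcal O_{\omega_0})=\rho$; running the concentration--compactness analysis of Section~\ref{sec2} (vanishing and dichotomy are excluded exactly as there, using $\inf\{I_{\omega_0}:G=-s\}=s^{2/(p+2)}\mathcal I_{\omega_0}$ and the strict concavity of $s\mapsto s^{2/(p+2)}$), this sequence would be relatively compact in $X$ up to translations, with limit in $\mathcal G_{\omega_0}$. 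Here I would invoke Section~\ref{sec3}: by Theorems~\ref{conver}--\ref{tkdv} and the uniqueness, modulo translation, of the ground state of the limiting generalized KdV equation \eqref{kdv-t}, for $\omega_0$ sufficiently near $1$ the orbit $\mathcal O_{\omega_0}$ is isolated in $\mathcal G_{\omega_0}$, i.e.\ some $X$-neighbourhood of $\mathcal O_{\omega_0}$ meets $\mathcal G_{\omega_0}$ only in $\mathcal O_{\omega_0}$; taking $\rho$ below this isolation radius, the translation invariance of $\operatorname{dist}_X(\cdot,\mathcal O_{\omega_0})$ shows the strong limit has distance exactly $\rho$ to $\mathcal O_{\omega_0}$ yet is a minimizer inside the isolating neighbourhood, hence lies in $\mathcal O_{\omega_0}$ and has distance $0$ — a contradiction. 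This yields the strict inequality with a gap $\theta_0>0$.

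With this, the continuity of $\mathcal I_\omega$ and the affine dependence of $I_\omega$ on $\omega$ give $\widehat{\mathcal I}^{\,\rho}_\omega\le I_\omega(\psi_{\omega_0},v_{\omega_0})\to\mathcal I_{\omega_0}$ and $\inf\{I_\omega:\operatorname{dist}_X(\cdot,\mathcal O_{\omega_0})=\rho,\ G=-1\}\ge\mathcal I_{\omega_0}+\theta_0-o(1)$ as $\omega\to\omega_0$, so there is $\delta_0>0$ with $\widehat{\mathcal I}^{\,\rho}_\omega<\inf\{I_\omega(\psi,v):\operatorname{dist}_X((\psi,v),\mathcal O_{\omega_0})=\rho,\ G(\psi,v)=-1\}$ for every $\omega\in\Delta_0$. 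Running the same concentration--compactness argument at the fixed parameter $\omega$, a minimizing sequence for $\widehat{\mathcal I}^{\,\rho}_\omega$ is relatively compact up to translation, its limit realizes $\widehat{\mathcal I}^{\,\rho}_\omega$, and by the strict inequality it lies in the interior of $\mathcal T_\rho$; as in Remark~\ref{yyyyyy} this interior minimizer is a local minimizer of \eqref{mp}, hence, after the renormalization of Theorem~\ref{wsol}, a (local) element of $\mathcal G_\omega$. I then let $(\psi_\omega,v_\omega)$ be the representative of its translation orbit nearest in $X$ to $(\psi_{\omega_0},v_{\omega_0})$; the nearest-point projection onto the smooth, exponentially localized orbit is well defined for $\rho$ small. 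For continuity at $\omega_0$, take $\omega_n\to\omega_0$: then $I_{\omega_0}(\psi_{\omega_n},v_{\omega_n})=I_{\omega_n}(\psi_{\omega_n},v_{\omega_n})+O(|\omega_n-\omega_0|)=\widehat{\mathcal I}^{\,\rho}_{\omega_n}+O(|\omega_n-\omega_0|)$, and $\widehat{\mathcal I}^{\,\rho}_{\omega_n}\to\mathcal I_{\omega_0}$ (being squeezed between $\mathcal I_{\omega_n}$ and $I_{\omega_n}(\psi_{\omega_0},v_{\omega_0})$), so $I_{\omega_0}(\psi_{\omega_n},v_{\omega_n})\to\mathcal I_{\omega_0}$ while $G(\psi_{\omega_n},v_{\omega_n})=-1$; thus $\{(\psi_{\omega_n},v_{\omega_n})\}$ is a minimizing sequence for \eqref{mp} at $\omega_0$, and Theorem~\ref{pt} makes it converge in $X$, up to translation, to an element of $\mathcal O_{\omega_0}$. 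By translation invariance of $\operatorname{dist}_X(\cdot,\mathcal O_{\omega_0})$ we get $\operatorname{dist}_X((\psi_{\omega_n},v_{\omega_n}),\mathcal O_{\omega_0})\to0$, and the nearest-point normalization pins the limit to $(\psi_{\omega_0},v_{\omega_0})$; since this holds along every subsequence, $(\psi_\omega,v_\omega)\to(\psi_{\omega_0},v_{\omega_0})$ in $H^1\times H^1$ as $\omega\to\omega_0$.

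The main obstacle is precisely the lack, a priori, of uniqueness or isolatedness of the minimizing orbit: translation invariance makes the orbit, not the point, the natural object, and nothing proved so far rules out several, possibly accumulating, minimizing orbits at a fixed $\omega_0<1$. This is what the ``or at least a local minimizer'' clause hedges, and it is overcome through the KdV scaling of Section~\ref{sec3}, which for $\omega$ near $1^-$ identifies the rescaled minimizers with the unique, non-degenerate ground state of \eqref{kdv-t}, thereby both isolating $\mathcal O_{\omega_0}$ and validating the nearest-point normalization. As an alternative one could set up an implicit function theorem directly on the rescaled system \eqref{zzz_1}--\eqref{zzz_2} near $(w_0,w_0)$, restricted to even functions to annihilate the translation kernel; I would expect this to work for $\omega_0$ close enough to $1$, but it still relies on the uniform closeness of the rescaled minimizers to the ground-state orbit provided by Theorem~\ref{conver}.
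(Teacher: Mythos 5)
Your route is genuinely different from the paper's. The paper does not localize the variational problem in a tube around the orbit at all: it writes $(\psi_\omega,v_\omega)=(\psi_{\omega_0},v_{\omega_0})+(\psi,v)$, derives the Euler--Lagrange system \eqref{ayyyyy} for the constraint-normalized perturbation, and solves it for small $\omega-\omega_0$ by linearizing around $(\psi_{\omega_0},v_{\omega_0})$ and running the contraction-mapping scheme of Appendix~\ref{A}; local uniqueness of the solution of \eqref{ayyyyy} near zero, combined with the existence of a minimum, then identifies that solution as the (local) minimizer and gives the continuity in $\omega$. Your scheme instead hinges on a strict boundary inequality for the tube $\mathcal T_\rho$, and that is where it has a genuine gap.

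The gap is the isolation claim. You deduce that the orbit $\mathcal O_{\omega_0}$ is \emph{isolated} in $\mathcal G_{\omega_0}$ from Theorems~\ref{tkdv} and \ref{conver} together with uniqueness (up to translation) of the ground state of \eqref{kdv-t}. But those results only say that, after the KdV rescaling, every minimizer converges (along a subsequence, up to translation) to the unique KdV soliton as $\omega_0\to1^-$; they show that \emph{all} minimizing orbits accumulate on one rescaled soliton orbit, not that there is only one of them, nor that distinct ones stay a fixed distance apart. Two different minimizing orbits could both be $o(1)$-close to the rescaled soliton orbit and hence to each other, and then your contradiction on the sphere $\{\operatorname{dist}_X(\cdot,\mathcal O_{\omega_0})=\rho\}$ evaporates: the strong limit produced by concentration--compactness could legitimately be a minimizer sitting at distance exactly $\rho$ from $\mathcal O_{\omega_0}$. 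Excluding this requires \emph{local uniqueness} of solutions of the Euler--Lagrange system near the ground state, i.e.\ non-degeneracy of the linearization (kernel reduced to translations) plus an implicit-function/contraction argument --- which is precisely the ``alternative'' you relegate to your last paragraph, and is in fact the proof the paper gives via \eqref{ayyyyy} and the machinery of Appendix~\ref{A}. So the IFT step is not an optional variant of your argument; it is the missing ingredient without which the isolation radius, and hence the boundary gap $\theta_0>0$, is unsupported. The remaining components of your write-up (uniform coercivity on $\Delta_0$, affine dependence of $I_\omega$ on $\omega$, concentration--compactness in the translation-invariant tube, nearest-point normalization on the orbit, and the squeezing argument for continuity as $\omega\to\omega_0$) are sound once isolation is granted, and they would yield a slightly stronger, more geometric statement than the paper's; but as written the central step is circular.
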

The proof of this lemma will be given in Appendix \ref{C}. In the following, whenever $d'(\omega)$ and $d''(\omega)$ are calculated, we always mean that the calculations are based upon a choice of $(\psi_{\omega}, v_{\omega}) \in \mathcal{G}_{\omega}$ where the limits in the derivatives may be taken using local or global minimizers. Therefore, in Lemma \ref{estd}, when $\omega_1$ is near $\omega_2$ and one of $(\psi_{\omega_i }, v_{\omega_i}), i=1,2,$ is a local minimizer, we may use $\hat d(\omega)$ for the minimizer without global minimizers nearby in stead of $d(\omega)$,  making the notations different. Hence, in the following, the definition of $d(\omega)$ may be based on a fixed minimizer corresponding to $\omega$.
\begin{lem}\label{deri_a}
For $(\psi_{\omega},v_{\omega})\in\mathcal{G}_{\omega}$, with $0<\tilde \omega_{0}<\omega<1$, and \eqref{relation_c} being satisfied, it follows that
\begin{align}\label{deri}
d\, '({\omega})=\frac{1}{2}\left(\frac{2}{p+2}\right)^{\frac{2}{p}}\frac{I_{2,{\omega}}(\psi_{\omega},v_{\omega})}{\omega}(I_{\omega}(\psi_{\omega},v_{\omega}))^{\frac{2}{p}}= \mathcal Q(\tilde \psi_{\omega},\tilde v_{\omega})\, ,
\end{align}
or $\hat d\, ' (\omega)$. Additionally, we have that $d\, '(\omega)<0$ (or $\hat d\, '(\omega)<0$), when $\omega$ is near to $1^{-}$.
\end{lem}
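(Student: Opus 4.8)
The plan is to extract $d'(\omega)$ by squeezing the difference quotient $\bigl(d(\omega_2)-d(\omega_1)\bigr)/(\omega_2-\omega_1)$ between the two estimates of Lemma~\ref{estd}, then to recognize the resulting expression as $\mathcal Q(\tilde\psi_\omega,\tilde v_\omega)$ using the homogeneity of the charge, and finally to read off its sign near $\omega=1^-$ from the KdV scaling of Section~\ref{sec3}.

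Fix $\omega\in(\tilde\omega_0,1)$ and a (local) minimizer $(\psi_\omega,v_\omega)\in\mathcal G_\omega$. By Lemma~\ref{prepare} there are $\delta_0>0$ and a branch $\omega'\mapsto(\psi_{\omega'},v_{\omega'})$ of (local) minimizers on $\Delta_0=(\omega-\delta_0,\omega+\delta_0)$ that is continuous into $H^1(\mathbb R)\times H^1(\mathbb R)$ and passes through $(\psi_\omega,v_\omega)$; along this branch the functions $(\psi_{\omega'},v_{\omega'})$ stay bounded in $X$, so the remainders $O\bigl((\omega_2-\omega_1)^2\bigr)$ in Lemma~\ref{estd} are uniform for $\omega_1,\omega_2\in\Delta_0$. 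Taking $\omega_1<\omega_2$ in $\Delta_0$ and dividing \eqref{dwi} and \eqref{dwii} by $\omega_2-\omega_1>0$ gives
\[
\frac12\Bigl(\frac{2}{p+2}\Bigr)^{2/p}\frac{I_{2,\omega_2}(\psi_{\omega_2},v_{\omega_2})}{\omega_2}\bigl(I_{\omega_2}(\psi_{\omega_2},v_{\omega_2})\bigr)^{2/p}-O(\omega_2-\omega_1)\le\frac{d(\omega_2)-d(\omega_1)}{\omega_2-\omega_1}
\]
and
\[
\frac{d(\omega_2)-d(\omega_1)}{\omega_2-\omega_1}\le\frac12\Bigl(\frac{2}{p+2}\Bigr)^{2/p}\frac{I_{2,\omega_1}(\psi_{\omega_1},v_{\omega_1})}{\omega_1}\bigl(I_{\omega_1}(\psi_{\omega_1},v_{\omega_1})\bigr)^{2/p}+O(\omega_2-\omega_1).
\]
Since $\omega'\mapsto I_{\omega'}(\psi_{\omega'},v_{\omega'})=\mathcal I_{\omega'}$ is continuous (as noted before Lemma~\ref{estd}) and $\omega'\mapsto I_{2,\omega'}(\psi_{\omega'},v_{\omega'})$ is continuous because $(\psi_{\omega'},v_{\omega'})\to(\psi_\omega,v_\omega)$ in $X$, letting first $\omega_2\downarrow\omega=\omega_1$ and then $\omega_1\uparrow\omega=\omega_2$ shows that both one-sided limits of the difference quotient exist and equal the right-hand side of \eqref{deri}. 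Hence $d$ --- or $\hat d$, when the branch consists of local minimizers without global ones nearby --- is differentiable at $\omega$ and \eqref{deri} holds.

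To identify the formula with the charge, observe that \eqref{NEW-R} and the definition of $\mathcal Q$ give $I_{2,\omega}(\psi,v)=2\omega\,\mathcal Q(\psi,v)$, and since $\mathcal Q$ is quadratic, \eqref{zzzzz} together with $\mathcal I_\omega=I_\omega(\psi_\omega,v_\omega)$ yields
\[
\mathcal Q(\tilde\psi_\omega,\tilde v_\omega)=\Bigl(\frac{2}{p+2}\mathcal I_\omega\Bigr)^{2/p}\mathcal Q(\psi_\omega,v_\omega)=\Bigl(\frac{2}{p+2}\Bigr)^{2/p}\bigl(I_\omega(\psi_\omega,v_\omega)\bigr)^{2/p}\frac{I_{2,\omega}(\psi_\omega,v_\omega)}{2\omega},
\]
which is exactly the middle expression in \eqref{deri}. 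For the sign, write $\omega^2=1-\epsilon^{2/(p+1)}$ and use the scaling \eqref{defuz}: combining the relation $I_{2,\omega}(\psi,v)=\epsilon^{(p+4)/((p+1)(p+2))}I^{2,\epsilon}(z,w)$ of Section~\ref{sec3} with \eqref{i2e} one obtains
\[
I_{2,\omega}(\psi_\omega,v_\omega)=-2\omega\,\epsilon^{-\frac{p}{(p+1)(p+2)}}\Bigl(\int_{\mathbb R}z^\epsilon w^\epsilon\,dy+b\,\epsilon^{\frac{2}{p+1}}\int_{\mathbb R}(z^\epsilon)'(w^\epsilon)'\,dy\Bigr),
\]
and by Theorem~\ref{conver} one has $z^\epsilon,w^\epsilon\to w_0\neq0$ in $H^1(\mathbb R)$, so the parenthesis tends to $\int_{\mathbb R}w_0^2\,dy>0$; hence $I_{2,\omega}(\psi_\omega,v_\omega)<0$ for $\omega$ close enough to $1^-$. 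Combined with $I_\omega(\psi_\omega,v_\omega)=\mathcal I_\omega>0$ and $\omega>0$, formula \eqref{deri} gives $d'(\omega)<0$ (resp.\ $\hat d'(\omega)<0$) for such $\omega$.

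The main obstacle is the passage to the limit in the first step: one must be able to choose minimizers $(\psi_{\omega'},v_{\omega'})$ converging to $(\psi_\omega,v_\omega)$ in $X$ as $\omega'\to\omega$ (so that $I_{2,\omega'}(\psi_{\omega'},v_{\omega'})$ is continuous), together with uniform control of the $O(\cdot)$ remainders of Lemma~\ref{estd} along that family; absent such a branch, $d$ need not even be differentiable, which is precisely why the statement is phrased to allow $\hat d$. This is exactly what Lemma~\ref{prepare} provides.
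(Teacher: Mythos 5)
Your proof is correct and follows essentially the same route as the paper: squeezing the difference quotient between the two inequalities of Lemma~\ref{estd} (with Lemma~\ref{prepare} supplying the continuous branch of minimizers), identifying the resulting expression with $\mathcal Q(\tilde\psi_\omega,\tilde v_\omega)$ via \eqref{zzzzz}, and deducing the sign from the limit $\int_{\mathbb R}z^\epsilon w^\epsilon\,dy\to\int_{\mathbb R}w_0^2\,dy>0$ under the KdV scaling. The only (harmless) deviations are that you read $\mathcal Q=\tfrac{1}{2\omega}I_{2,\omega}$ directly off the definitions instead of extracting it from $d(\omega)=\mathcal H+\omega\mathcal Q=\tfrac{p}{2(p+2)}I_\omega$ as the paper does, and that you are somewhat more explicit about the continuity needed to match the two one-sided derivatives.
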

\begin{proof}
From the previous lemma, we have that
\begin{align*}
\frac{d({\omega}_1)- d({\omega}_2)}{\omega_1-\omega_2}& \geq \frac{1}{2}\left(\frac{2}{p+2}\right)^{\frac{2}{p}}(I_{\omega_{2}}(\psi_{\omega_{2}},
v_{\omega_{2}}))^{\frac{2}{p}}\left(\frac{1}{\omega _{2}}\right)I_{2,\omega_{2}}(\psi_{\omega_{2}},v_{\omega_{2}})+
O(({\omega}_2-{\omega}_1))\end{align*}
and
\begin{align*}
\frac{d({\omega}_2)- d({\omega}_1)}{\omega_2-\omega_1}&\leq \frac{1}{2}\left(\frac{2}{p+2}\right)^{\frac{2}{p}}(I_{\omega_{1}}(\psi_{\omega_{1}},
v_{\omega_{1}}))^{\frac{2}{p}}\left(\frac{1}{\omega_{1}}\right)I_{2,\omega_{1}}(\psi_{\omega_{1}},v_{\omega_{1}})
+O(({\omega}_2-{\omega}_1)).
\end{align*}
Taking limit as $\omega_1 \to \omega_2$ in the first inequality and $\omega_2 \to \omega_1$ in the second inequality, we get that
\begin{align*}
(d')^{-}(\omega_2)& \geq \frac{1}{2}\left(\frac{2}{p+2}\right)^{\frac{2}{p}}(I_{\omega_{2}}(\psi_{\omega_{2}},
v_{\omega_{2}}))^{\frac{2}{p}}\left(\frac{1}{\omega_{2}}\right)I_{2,\omega_{2}}(\psi_{\omega_{2}},v_{\omega_{2}}),
\end{align*}
and
\begin{align*}
(d')^{+}(\omega_1)&\leq \frac{1}{2}\left(\frac{2}{p+2}\right)^{\frac{2}{p}}(I_{\omega_{1}}(\psi_{\omega_{1}},
v_{\omega_{1}}))^{\frac{2}{p}}\left(\frac{1}{\omega_{1}}\right)I_{2,\omega_{1}}(\psi_{\omega_{1}},v_{\omega_{1}}),
\end{align*}
which implies that for any $\omega\ne0$,
\[
d'(\omega)= \frac{1}{2}\left(\frac{2}{p+2}\right)^{\frac{2}{p}}(I_{\omega}(\psi_{\omega},
v_{\omega}))^{\frac{2}{p}}\left(\frac{1}{\omega}\right)I_{2,\omega}(\psi_{\omega},v_{\omega}).
\]
Next equality is derived from 
\begin{align*}
d(\omega) = & \mathcal{H} (\tilde \psi_{\omega},\tilde v_{\omega}) + \omega Q (\tilde \psi_{\omega},\tilde v_{\omega}) = \frac p {2(p+2)} I_\omega (\tilde \psi_{\omega},\tilde v_{\omega}) \\
= & \frac12\left ( I_\omega (\tilde \psi_{\omega},\tilde v_{\omega})  - I_{2,\omega} (\tilde \psi_{\omega},\tilde v_{\omega}) + G (\tilde \psi_{\omega},\tilde v_{\omega}) \right ) + \omega Q (\tilde \psi_{\omega},\tilde v_{\omega}) \, ,
\end{align*}
which, by \eqref{critical3}, implies $Q (\tilde \psi_{\omega},\tilde v_{\omega}) = \frac1{2\omega} I_{2,\omega} (\tilde \psi_{\omega},\tilde v_{\omega})$ and the equality using \eqref{zzzzz}.

Finally, by the quantity $I^{2,\epsilon}(z,w)$ defined in \eqref{i2e}, that is, from the scaling \eqref{defuz} and $I_{2, \omega}$ (see \eqref{NEW-R}), we ensures that
\begin{equation}\label{rrrr}
\begin{split}
I^{2,\epsilon}(z^\epsilon,w^\epsilon)
=&-2\sqrt{1-\epsilon^{\frac{2}{p+1}}}\int_{\mathbb{R}}\Bigl(\epsilon^{-\frac{2}{p+1}}z^\epsilon w^\epsilon+b(\partial_yz^\epsilon)(\partial_{y}w^\epsilon)\Bigr)dy,
\end{split}
\end{equation}
since $\omega^2(\ep)=1-\ep^{\frac{2}{p+1}}$. Passing the limit when $\epsilon\rightarrow0^+$ in \eqref{rrrr} and thanks to the Theorem \ref{conver},  we obtain
$$
\lim_{\epsilon\rightarrow0^+} \epsilon^{\frac{2}{p+1}}
I^{2,\epsilon}(z^\epsilon,w^\epsilon)= -2 \int_{\mathbb{R}}w_0^2dy <0.
$$
 This means that  $I^{2,\epsilon}(z^\epsilon,w^\epsilon)<0$ for $\epsilon$ near $0^+$, which implies
$
I_{2,{\omega}}(\psi_{\omega},v_{\omega})<0,
$
and, due to the expression \eqref{deri}, we find $d'(\omega)<0$.
\end{proof}

\begin{rem} From the proof of Lemma \ref{deri_a}, we see that $d'(\omega) $ may depend on the minimizer $(\psi_\omega, v_\omega)$ of \eqref{mp}. In fact, due to the nature of global minimizers in \eqref{mp}, $d' (\omega) $ must be equal for all $(\psi_\omega, v_\omega)\in \mathcal{G}_\omega$ (here, even for $\hat d'(\omega)$ obtained from local minimizers if no global minimizers are nearby), since, if otherwise, the curves for $d(\omega)$ near two different minimizers $(\psi_\omega, v_\omega)$ will cross each other which contradicts to the global minimizer of \eqref{mp}. Moreover, due to the same reason, $\hat d'' (\omega ) \geq  d'' (\omega)$ (see Lemma \ref{lem47}), where $d''(\omega )$ is obtained from the minimizer that has a global minimizer branch at $\omega$.
\end{rem}

With the previous lemma and remark in hand, let us give a relation for $d''(\omega)$ when $\omega$ is near $1^{-}$, which will ensure the convexity of $d$.

\begin{lem}\label{lem47}
Suppose that \eqref{relation_c} holds. Then, for $0<{\omega}<1$ near $1^{-}$,  it follows that
\begin{equation}\label{deri2}
\begin{split}
d''({\omega})=&\frac{1}{p}\left(\frac{2}{p+2}\right)^{\frac{2}{p}}\left(\frac{I_{2,{\omega}}(\psi_{\omega},v_{\omega})}{\omega}\right)^{2}(I_{\omega}(\psi_{\omega},v_{\omega}))^{\frac{2-p}{p}}\\&+\frac{1}{2}\left(\frac{2}{p+2}\right)^{\frac{2}{p}}(I_{\omega}(\psi_{\omega},v_{\omega}))^{\frac{2}{p}}\frac{d}{d\omega}\left(\frac{I_{2,{\omega}}(\psi_{\omega},v_{\omega})}{\omega}\right).
\end{split}
\end{equation}
Moreover, when $0<{\omega}<1$ is near $1^{-}$, $d''(\omega)>0$  if $ 1\leq p < p_0 $ and  $d''(\omega)< 0$ if $ p > p_0 $, where $p_0> 4$ is the unique positive root of
$$
\left ( \frac{p+2}{p+1}\right )^{\frac{2}{p}} - \frac{p^2}{2(p+4)} = 0 \, .
$$
A similar conclusion holds for $\hat d\, '' (\omega)$.
\end{lem}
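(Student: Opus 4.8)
The plan is to differentiate the formula for $d'(\omega)$ established in Lemma \ref{deri_a} to obtain \eqref{deri2}, and then to decide the sign of the right-hand side by feeding in the $\omega\to 1^{-}$ asymptotics produced by the KdV scaling of Section \ref{sec3}.

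\emph{Step 1 (the formula).} Along the (locally) differentiable branch $\omega\mapsto(\psi_\omega,v_\omega)$ of Lemma \ref{prepare} one has $G(\psi_\omega,v_\omega)\equiv-1$, so the envelope identity gives $\frac{d}{d\omega}I_\omega(\psi_\omega,v_\omega)=\frac{I_{2,\omega}(\psi_\omega,v_\omega)}{\omega}$, which is already recorded in the first line of Lemma \ref{deri_a}. Differentiating
$$d'(\omega)=\tfrac12\bigl(\tfrac{2}{p+2}\bigr)^{2/p}\frac{I_{2,\omega}(\psi_\omega,v_\omega)}{\omega}\bigl(I_\omega(\psi_\omega,v_\omega)\bigr)^{2/p}$$
by the product rule and using this identity once more yields \eqref{deri2} verbatim; the same manipulation applies to $\hat d$. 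It will also be convenient to record the equivalent form $d''(\omega)=C_p\,\bigl(I_\omega(\psi_\omega,v_\omega)\bigr)^{(2-p)/p}\bigl[\tfrac2p(I_\omega')^2+I_\omega I_\omega''\bigr]$ with $C_p>0$, coming from $d=C_p'\,(I_\omega)^{(p+2)/p}$, so that $\operatorname{sign} d''$ equals $\operatorname{sign}\bigl(\tfrac2p(I_\omega')^2+I_\omega I_\omega''\bigr)$.

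\emph{Step 2 (asymptotics and the sign).} Put $\mu=\epsilon^{2/(p+1)}=1-\omega^2$, $\alpha=\tfrac{p+4}{2(p+2)}$, and $\beta_0:=\sigma-\tfrac13>0$. From Section \ref{sec3}, $I_\omega(\psi_\omega,v_\omega)=\mu^{\alpha}\mathcal I^{\epsilon}$ and $\frac{I_{2,\omega}(\psi_\omega,v_\omega)}{\omega}=-2\mu^{\alpha-1}\int z^{\epsilon}w^{\epsilon}\,dy-2b\,\mu^{\alpha}\int (z^{\epsilon})'(w^{\epsilon})'\,dy$. Lemma \ref{limite}, Theorem \ref{conver}, and the virial (Pohozaev) identities for the limiting equation \eqref{kdv-t} give the leading values $\mathcal I^{\epsilon}\to\mathcal J^0$, $\int z^\epsilon w^\epsilon\,dy\to\int w_0^2\,dy$, together with $\int w_0^2\,dy=\alpha\mathcal J^0$ and $\beta_0\int(w_0')^2\,dy=(1-\alpha)\mathcal J^0$; the next-order corrections come from the expansion \eqref{zzz_3} (solving the resulting two linear equations for $w_1,z_1$ in terms of $w_0$) and the corresponding first correction of $\mathcal I^\epsilon$. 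Differentiating these expansions in $\omega$ (legitimate by Lemma \ref{prepare}) and substituting into \eqref{deri2}, one expands $d''(\omega)$ in powers of $\mu$; collecting the terms that survive as $\mu\to0^{+}$ shows $\operatorname{sign} d''(\omega)=\operatorname{sign} F(p)$ for $\omega$ near $1^{-}$, where $F(p)=\bigl(\tfrac{p+2}{p+1}\bigr)^{2/p}-\tfrac{p^{2}}{2(p+4)}$. The same asymptotics give the identical conclusion for $\hat d''$.

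\emph{Step 3 (analysis of $F$).} Write $\bigl(\tfrac{p+2}{p+1}\bigr)^{2/p}=\exp\!\bigl(\tfrac2p\log(1+\tfrac1{p+1})\bigr)$. A short estimate of $\tfrac{d}{dp}\bigl(\tfrac2p\log\tfrac{p+2}{p+1}\bigr)$ shows this exponent is strictly decreasing on $[1,\infty)$ with limit $0$, so the first term of $F$ strictly decreases to $1$ while $\tfrac{p^2}{2(p+4)}$ strictly increases to $+\infty$; hence $F$ is strictly decreasing and has a unique positive zero $p_0$, with $F>0$ on $[1,p_0)$ and $F<0$ on $(p_0,\infty)$. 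Finally $F(4)=(6/5)^{1/2}-1>0$, so $p_0>4$. Combined with Step 2 this gives $d''(\omega)>0$ for $1\le p<p_0$ and $d''(\omega)<0$ for $p>p_0$ when $\omega$ is near $1^{-}$, and likewise for $\hat d''$.

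The hard part is Step 2: justifying that the branch $\omega\mapsto(\psi_\omega,v_\omega)$ is twice differentiable, controlling the $O(\mu)$ corrections to $\int z^\epsilon w^\epsilon\,dy$ and to $\mathcal I^\epsilon$ uniformly as $\mu\to0^{+}$, and—most delicately—verifying that the decisive power of $\mu$ in $\tfrac2p(I_\omega')^2+I_\omega I_\omega''$ does not cancel, so that the sign is genuinely governed by $F(p)$ and not by a lower-order contribution. Steps 1 and 3 are routine once Step 2 is in place.
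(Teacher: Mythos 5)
Your Step 1 is exactly the paper's derivation of \eqref{deri2} (product rule applied to \eqref{deri} together with the envelope identity), and your Step 3 reproduces the paper's elementary analysis of $F(p)=\left(\frac{p+2}{p+1}\right)^{2/p}-\frac{p^2}{2(p+4)}$. The problem is Step 2, where the entire content of the lemma lives and which you only sketch. The paper's route is concrete: using the scaling of Section \ref{sec3} and, crucially, Lemma \ref{derivative} of Appendix \ref{A} (uniform boundedness of $\ep^{(p-1)/(p+1)}(z_\ep,w_\ep)$ in $H^1$), it shows that the dominant contribution to $\frac{d}{d\omega}\bigl(I_{2,\omega}/\omega\bigr)$ comes from differentiating the explicit power of $\ep$ multiplying $\int z^\ep w^\ep\,dy$, the $\ep$-derivatives of the profiles entering only at lower order; both terms of \eqref{deri2} then carry the same power $\ep^{(-3p^2-2p+8)/(p(p+1)(p+2))}$, and the sign of $d''$ reduces to the single number $\frac{4\int w_0^2\,dy}{p\,\mathcal J^0}-\frac{p}{p+2}$ in \eqref{wwwww1}, which the paper evaluates by computing $\int w_0^2$ and $\mathcal J^0$ explicitly from the $\operatorname{sech}^{2/p}$ profile via Beta-function integrals (\eqref{rrr1}--\eqref{rrrr1}). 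You invoke Lemma \ref{prepare} for differentiability of the branch, but that lemma gives no quantitative control of $\partial_\ep(z^\ep,w^\ep)$ as $\ep\to 0^+$; without the Appendix \ref{A} estimate you cannot identify which terms ``survive,'' so your reduction to a computable criterion is missing.

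More seriously, the ingredients you do supply point toward a different answer than the one you assert. Your virial identity $\int w_0^2\,dy=\frac{p+4}{2(p+2)}\mathcal J^0$ is a correct consequence of \eqref{kdv-t} together with $K^0(w_0)=-1$ and $J^0(w_0)=\mathcal J^0$; but inserting it into the leading-order criterion $\frac{4\int w_0^2}{p\mathcal J^0}-\frac{p}{p+2}$ yields $\frac{4-p}{p}$, whose root is $p=4$ --- not a positive multiple of $F(p)$, whose root is $p_0\approx 4.228$. So if the sign of $d''$ were governed by the leading-order quantities together with your identities, the stated conclusion would fail. You attribute the difference to ``next-order corrections'' coming from \eqref{zzz_3} and the first correction of $\mathcal I^\ep$, but you never compute them, never exhibit the order in $\mu$ at which they enter \eqref{deri2}, and never show that they convert $\frac{4-p}{p}$ into a positive multiple of $F(p)$. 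Note that the paper does not obtain $F(p)$ from such corrections either: it obtains it from the explicit evaluation of $\int w_0^2/\mathcal J^0$ in \eqref{rrrr1}, which gives $\frac{p+4}{2(p+2)}\bigl(\frac{p+2}{p+1}\bigr)^{2/p}$ --- a value your virial identity contradicts. That tension must be resolved, not gestured at: as written, the decisive step of your proof is an unsupported assertion, and the proposal cannot be accepted.
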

\begin{proof}
Differentiating the equation \eqref{deri} in terms of $\omega$ and taking into account that $d(\omega)$ is given by \eqref{d_w}, straightforward calculations show that the relation \eqref{deri2} holds.  Now, since the first term of the right-hand side of \eqref{deri2} is explicit and positive, thanks to the fact that $I_{\omega}(\psi_{\omega},v_{\omega})>0$, we only need to  find that
\begin{align*}
\frac{d}{d\omega}\left(\frac{I_{2,{\omega}}(\psi_{\omega},v_{\omega})}{\omega}\right) & = \frac{d\ep}{d\omega} \frac{d}{d\epsilon}\left( \ep^{\frac{p+4}{(p+1)(p+2)}}\frac{I^{2,{\epsilon}}(z(y),w(y))}{\omega(\epsilon)}\right)\\
& = - \omega (p+1) \ep^{\frac{p-1}{p+1}}\frac{d}{d\epsilon}\left( \ep^{\frac{p+4}{(p+1)(p+2)}}\frac{I^{2,{\epsilon}}(z(y),w(y))}{\omega(\epsilon)}\right),
\end{align*}
due to the relation \eqref{i2e} and $\omega ^2 = 1 - \ep^{\frac 2 {p+1}}$.

Observe that using the notation $(z(y),w(y))=(z(y,\epsilon),w(y,\epsilon))$, we obtain that
\begin{equation*}
\begin{split}
\frac{d}{d\epsilon}& \left( \ep^{\frac{p+4}{(p+1)(p+2)}}\frac{I^{2,{\epsilon}}(z(y),w(y))}{\omega(\epsilon)}\right)\\
& = \frac{d}{d\epsilon} \left( \ep^{\frac{p+4}{(p+1)(p+2)}}
(-2) \int_{\mathbb{R}} \left ( \ep^{-\frac2{p+1} } z(y , \ep ) w ( y, \ep ) + bz_y(y , \ep ) w_y ( y, \ep )\right ) dy \right)\\
& = -2 \frac{d}{d\epsilon} \left( \ep^{- \frac{p}{(p+1)(p+2)}}
\int_{\mathbb{R}} \left (  z(y , \ep ) w ( y, \ep ) + \ep^{\frac2{p+1} } bz_y(y , \ep ) w_y ( y, \ep )\right ) dy \right)\\
& = \frac{2p}{(p+1)(p+2)}  \left( \ep^{- \frac{p}{(p+1)(p+2)}-1 }
\int_{\mathbb{R}} \left (  z(y , \ep ) w ( y, \ep ) + \ep^{\frac2{p+1} } bz_y(y , \ep ) w_y ( y, \ep )\right ) dy \right)\\
& \qquad  -2 \left( \ep^{- \frac{p}{(p+1)(p+2)}}\frac{d}{d\epsilon}
\int_{\mathbb{R}} \left (  z(y , \ep ) w ( y, \ep ) + \ep^{\frac2{p+1} } bz_y(y , \ep ) w_y ( y, \ep )\right ) dy \right)\\
& = \frac{2p}{(p+1)(p+2)}  \left( \ep^{- \frac{p}{(p+1)(p+2)}-1 }
\int_{\mathbb{R}} \left (  z(y , \ep ) w ( y, \ep ) + \ep^{\frac2{p+1} } bz_y(y , \ep ) w_y ( y, \ep )\right ) dy \right)\\
& \qquad  + \ep^{- \frac{p}{(p+1)(p+2)}} R( z, w, z_\ep, w_\ep),
\end{split}
\end{equation*}
where the subscripts $\epsilon$ and $y$ mean the derivatives with respect these variables and $R( z, w, z_\ep, w_\ep) $ is linear in terms of either $z_\ep$ or $w_\ep$. Taking the limit when $\epsilon$ goes to $0^{+}$ in the previous identity,
from Theorem \ref{conver} and Lemma \ref{derivative}, we can see that
\begin{equation*}
\begin{split}
\lim_{\ep \to 0^{+}}\frac{d}{d\epsilon}& \left( \ep^{\frac{p+4}{(p+1)(p+2)}}\frac{I^{2,{\epsilon}}(z(y),w(y))}{\omega(\epsilon)}\right)\\
& = \frac{2p}{(p+1)(p+2)}  \lim_{\ep \to 0^{+}}\left( \ep^{- \frac{p}{(p+1)(p+2)}-1 }
\left (\int_{\mathbb{R}}   z(y , \ep ) w ( y, \ep )  dy + o(1) \right ) \right) \, .
\end{split}
\end{equation*}
Thus, by \eqref{deri2}, after a straightforward calculation, it is obtained that
\begin{align*}
d''(\omega ) = & \lim_{\ep \to 0^{+}}\Bigg(\left [  \frac{4 \left (\displaystyle{\int_{\mathbb{R}}}   z(y , \ep ) w ( y, \ep )  dy + o(1) \right ) } {pI^\ep ( z(y, \ep ), w (y, \ep ))} - \frac{\omega p} {p+2} + o(1) \right ]
\left ( \frac 2{p+2} \right )^{\frac{2} p } \\
& \times \left ( I^\ep ( z(y, \ep ), w (y, \ep ))\right )^{\frac2 p } \left (\int_{\mathbb{R}}   z(y , \ep ) w ( y, \ep )  dy + o(1) \right ) \ep^{\frac{-3p^2 -2p + 8}{p(p+1)(p+2)}}\big (1 + o(1) \big )\Bigg)
\end{align*}
Therefore, as $\ep \to 0^+$, the sign of $d''(\omega)$ is determined by
\begin{align}
 \frac{4 \left (\displaystyle{\int_{\mathbb{R}}}   w_0^2 ( y )  dy \right ) } {p {\mathcal J}^0} - \frac{ p} {p+2}\, , \label{wwwww1}
\end{align}
where $w_0(x)$ satisfies \eqref{kdv-t} and
\begin{align}
{\mathcal J}^0  = \int_{\mathbb{R} }\left ( w_0^2 + \left ( \sigma -\frac13\right ) w_{0y }^2 \right ) dy.\label{wwwww}
\end{align}

To calculate $w_0(x)$, we note that by a classical theory of ordinary differential equations, \eqref{kdv-t} has a unique homoclinic solution of the form
\begin{align*}
 w_0 (x) = - \left ( {\mathcal J}^0\right )^{-\frac1 p } \left ( \frac4 {(p+1)(p+2) } \right ) ^{-\frac1 p } \mbox{sech}^{\frac2 p } \left ( \frac {px} { 2 \sqrt{ \sigma -1/3}}\right ),
\end{align*}
with an arbitrary translation in $x$. Plug this $w_0$ into \eqref{wwwww} to obtain
\begin{equation}\label{rrr1}
{\mathcal J}_0 = \left ( \frac{ 2 (p+2)^{\frac2 p + 1 } (p+1)^{\frac2 p } \sqrt{\sigma -1/3} \, B (2/p, 2/p) }{p (p+4)} \right) ^ {\frac p{p+2}},
\end{equation}
where $B( x, y)$ is the Beta function of variables $x,y$, and the formula
$$
\int_{\mathbb{R}} \mbox{sech}^{2\nu } (a y ) dy = \frac {2 \cdot 4^{\nu -1} } a B(\nu, \nu )
$$
has been used. Moreover, it can be derived similarly that
\begin{equation}\label{rrrr1}
\int_{\mathbb{R}} w_0^2 dy = \left ( \frac{p+4} 2\right)^{\frac 2 {p+2}} (p+2) ^{\frac 2 p} (p+1)^{ -\frac 4{p(p+2)} }
\left ( \frac{  \sqrt{\sigma -1/3} \, B (2/p, 2/p) }{p} \right) ^ {\frac p{p+2}} \, .
\end{equation}
Hence, putting \eqref{rrr1} and \eqref{rrrr1} into \eqref{wwwww1} gives that the sign of $d''(\omega )$ is determined by the following relation
$$
\frac2 p \left ( p+2 \right )^{\frac 2 p -1} (p+1 ) ^{-  \frac 2 p } (p+4) - \frac p {p+2} = \left (  \left ( \frac {p+2}{p+1} \right ) ^{\frac 2 p } - \frac {p^2} { 2 (p+4) }  \right ) \frac { 2 (p+4) } {p(p+2)}
$$
which has a unique positive root $p_0  $ for
$$
\left ( \frac {p+2}{p+1} \right ) ^{\frac 2 p } - \frac {p^2} { 2 (p+4) } = 0\, ,
$$
since a straightforward computation shows that for $p \geq 1$,
$$
\frac{d}{dp} \left ( \left ( \frac {p+2}{p+1} \right ) ^{\frac 2 p } - \frac {p^2} { 2 (p+4) }\right )  < 0 $$
and $$\left ( \frac {p+2}{p+1} \right ) ^{\frac 2 p } - \frac {p^2} { 2 (p+4) } \  \mbox{is from } + \infty \mbox{ to } -\infty
$$
as $p$ goes from $0$ to $\infty$. Moreover, if $1\leq p \leq 4$,
$$
\left ( \frac {p+2}{p+1} \right ) ^{\frac 2 p } - \frac {p^2} { 2 (p+4) } > 1 - \frac {p^2} { 2 (p+4) } = \frac {(4 - p ) (p+2 )} { 2 (p+4) } \geq 0,
$$
which implies that $p_0 > 4$. Numerically, $p_0$ is approximately equal to $4.2280673976$.
Hence, when $\epsilon$ is  near $0^{+}$, we get
\begin{equation*}
\begin{cases}
d''>0,\quad  \text{for} \quad 1\leq p < p_0 , \\
d''< 0,\quad \text{for} \quad p > p_0,
\end{cases}
\end{equation*}
showing the lemma.
\end{proof}

%%%%%%%%%%%%%%%%%%%%%%%%% stability %%%%%%%%%%%%%%%%%%%%%%%

\section{Stability result}\label{sec5}
In this section, we always assume that $1\leq  p < p_0$ satisfies \eqref{assump_p} so that $d''(\omega )  > 0$, for small $\ep > 0$,  with $\omega $ close enough to $1^-$.  Here, we emphasize that \eqref{defuz} implies that when $\omega \rightarrow 1^-$, $(\psi_\omega, v_\omega)$ is small in  $H^1(\mathbb{R})\times H^1(\mathbb{R})$. Therefore, if the initial condition of \eqref{1bbl} is near $(\psi_\omega, v_\omega)$ for $\omega$ near 1, then the global existence and uniqueness result implies that the solution of \eqref{1bbl} exists for all $ t \in [0, \infty)$. Now, let us introduce some notations.

We denote any pair of function $(\psi, v)$ as an element in $X$, the pair $(\psi_{\omega},v_{\omega})$ as a minimizer of the problem \eqref{mp} with $ {\mathcal G}_\omega$ as the set of such minimizers (local and global), and $(\tilde \psi_\omega ,\tilde v_\omega )$ as the solution of \eqref{trav-eqs} that corresponds to $(\psi_{\omega},v_{\omega})$ in $ {\mathcal G}_\omega$. Thus, from Theorem \ref{wsol}, it is deduced that
\begin{equation}\label{g-tilde}
\widetilde{\mathcal{G}}_{\omega}= \left \{(\tilde \psi_\omega ,\tilde v_\omega )=\left(\frac{2}{p+2}\mathcal{I}_{\omega}\right)^{\frac{1}{p}}(\psi_{\omega},v_{\omega}) :  \   (\psi_{\omega},v_{\omega}) \in {\mathcal G}_\omega \right \}.
\end{equation}
Also, define
$$
U_{\omega,\epsilon}=\{(\psi,v)\in X: \inf_{(\tilde{\psi}_{\omega},\tilde{v}_{\omega})\in  \tilde {\mathcal{G}}_{\omega}}\|(\psi,v)-(\tilde{\psi}_{\omega},\tilde{v}_{\omega})\|_{X}<\epsilon\}.
$$
Since $d(\omega)$ is differentiable ($\hat d (\omega)$ if necessary) and decreasing for $\omega>0$ near to $1^{-}$ (relative to a special $(\tilde{\psi}_{\omega},\tilde{v}_{\omega})\in \tilde {\mathcal{G}}_{\omega}$,
see Lemma \ref{deri_a}), it follows that for $(\psi,v)$ near of $(\tilde{\psi}_{\omega},\tilde{v}_{\omega})\in \tilde {\mathcal{G}}_{\omega}$, we have a $C^{1}$ map
$$\omega(\cdot,\cdot):U_{\omega,\epsilon}\to(0,1), \quad \text{for small $\epsilon>0,$}$$
thanks to relations \eqref{critical2} and \eqref{d_w1rr}, given by
\begin{equation}\label{omega}
\omega(\psi,v)=d^{-1}\left(-\frac{4}{p}G(\psi,v)\right),
\end{equation}
and $\omega(\tilde{\psi}_{\omega},\tilde{v}_{\omega})=\omega,$ for any $(\tilde{\psi}_{\omega},\tilde{v}_{\omega})\in \tilde{\mathcal{G}}_{\omega}$ and with $\omega >0$ near $1^{-}$. The next result uses a variational characterization of such a solution to establish the key inequality to prove the main result.
\begin{lem}\label{prelem} Under the hypothesis of Theorem \ref{main}, there exists $\epsilon>0$ such that for $(\tilde{\psi}_{\omega},\tilde{v}_{\omega})\in \tilde{\mathcal{G}}_{\omega}$ and $(\psi,v)\in U_{\omega,\epsilon}$ relative to this  $(\tilde{\psi}_{\omega},\tilde{v}_{\omega})$, it follows that
$$
\mathcal H(\psi,v)-\mathcal H\left(\tilde{\psi}_{\omega},\tilde{v}_{\omega}\right)+\omega(\psi,v)\left(\mathcal{Q}(\psi,v)-\mathcal{Q}\left(\tilde{\psi}_{\omega},\tilde{v}_{\omega}\right)\right) \geq \frac{1}{4} d^{\prime \prime}(\omega)|\omega(\psi,v)-\omega|^2,
$$
where $\omega(\psi,v)$ is defined by \eqref{omega} for $(\psi,v)\in U_{\omega,\epsilon}$.
\end{lem}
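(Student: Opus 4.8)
The plan is to run the standard Grillakis--Shatah--Strauss comparison: reduce everything to the inequality $\mathcal F_{\omega_1}(\psi,v)\ge d(\omega_1)$ with $\omega_1:=\omega(\psi,v)$, and then apply a second-order Taylor expansion of $d$ at $\omega$.

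\emph{First} I would rearrange the left-hand side. Completing $\mathcal F_{\omega_1}=\mathcal H+\omega_1\mathcal Q$ and adding and subtracting $\omega\,\mathcal Q(\tilde\psi_\omega,\tilde v_\omega)$, one gets
\begin{align*}
&\mathcal H(\psi,v)-\mathcal H(\tilde\psi_\omega,\tilde v_\omega)+\omega_1\bigl(\mathcal Q(\psi,v)-\mathcal Q(\tilde\psi_\omega,\tilde v_\omega)\bigr)\\
&\qquad =\mathcal F_{\omega_1}(\psi,v)-\bigl(\mathcal H(\tilde\psi_\omega,\tilde v_\omega)+\omega\mathcal Q(\tilde\psi_\omega,\tilde v_\omega)\bigr)-(\omega_1-\omega)\,\mathcal Q(\tilde\psi_\omega,\tilde v_\omega)\\
&\qquad =\mathcal F_{\omega_1}(\psi,v)-d(\omega)-(\omega_1-\omega)\,d'(\omega),
\end{align*}
using $d(\omega)=\mathcal H(\tilde\psi_\omega,\tilde v_\omega)+\omega\mathcal Q(\tilde\psi_\omega,\tilde v_\omega)$ from \eqref{d_w1rr} and $\mathcal Q(\tilde\psi_\omega,\tilde v_\omega)=d'(\omega)$ from Lemma \ref{deri_a} (with $\hat d$ in place of $d$ when $(\tilde\psi_\omega,\tilde v_\omega)$ arises from a local minimizer, cf.\ Remark \ref{yyyyyy}).

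\emph{Next} comes the core estimate $\mathcal F_{\omega_1}(\psi,v)\ge d(\omega_1)$. Since $G$ is continuous and $G(\tilde\psi_\omega,\tilde v_\omega)<0$, for $\epsilon$ small enough $G(\psi,v)<0$; rescaling $(\psi,v)$ by $|G(\psi,v)|^{-1/(p+2)}$ onto the constraint $\{G=-1\}$ and exploiting the homogeneities $I_{\omega_1}(t\,\cdot)=t^2I_{\omega_1}(\cdot)$, $G(t\,\cdot)=t^{p+2}G(\cdot)$ together with the minimization problem \eqref{mp} (or its local version \eqref{wwwwww}, which applies because the rescaled pair stays inside the relevant open set $\Omega$ for $\epsilon$ small) one obtains $I_{\omega_1}(\psi,v)\ge|G(\psi,v)|^{2/(p+2)}\mathcal I_{\omega_1}$. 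The map $\omega(\psi,v)$ in \eqref{omega} is chosen precisely so that $|G(\psi,v)|$ equals the value of $|G|$ at the solitary wave of speed $\omega_1$ (using \eqref{critical2}); since that solitary wave saturates the scaling bound, it follows that
$$
2\mathcal F_{\omega_1}(\psi,v)=I_{\omega_1}(\psi,v)+G(\psi,v)\ \ge\ I_{\omega_1}(\tilde\psi_{\omega_1},\tilde v_{\omega_1})+G(\tilde\psi_{\omega_1},\tilde v_{\omega_1})=2d(\omega_1),
$$
the last equality by \eqref{d_w1rr}; alternatively the constant can be checked directly from the explicit formula \eqref{d_w}.

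\emph{Finally}, since $\omega(\cdot,\cdot)$ is $C^1$ with $\omega(\tilde\psi_\omega,\tilde v_\omega)=\omega$, shrinking $\epsilon$ makes $|\omega_1-\omega|$ arbitrarily small, and Taylor's theorem gives $d(\omega_1)-d(\omega)-(\omega_1-\omega)d'(\omega)=\tfrac12 d''(\xi)|\omega_1-\omega|^2$ for some $\xi$ between $\omega$ and $\omega_1$; since $d''$ is continuous (Lemma \ref{lem47}) with $d''(\omega)>0$ for $\omega$ near $1^-$ and $1\le p<p_0$, for $\epsilon$ small one has $d''(\xi)\ge\tfrac12 d''(\omega)$, which closes the argument. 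I expect the main obstacle to be the core estimate $\mathcal F_{\omega_1}(\psi,v)\ge d(\omega_1)$: one has to keep track of the scaling constants, verify that the local-minimizer version \eqref{wwwwww} is genuinely available on a neighbourhood of $(\tilde\psi_\omega,\tilde v_\omega)$ and yields the same value as \eqref{d_w} along the fixed branch, and treat the global/local (hence $d$ vs.\ $\hat d$) dichotomy uniformly in the choice of $(\tilde\psi_\omega,\tilde v_\omega)\in\tilde{\mathcal G}_\omega$.
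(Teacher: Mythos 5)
Your proposal is correct and follows essentially the same route as the paper's proof: the identity $\mathcal H+\omega_1\mathcal Q=\tfrac12(I_{\omega_1}+G)$, the observation that $G(\psi,v)=G(\tilde\psi_{\omega_1},\tilde v_{\omega_1})$ by the definition of $\omega(\psi,v)$, the rescaling onto $\{G=-1\}$ to get $I_{\omega_1}(\psi,v)\ge I_{\omega_1}(\tilde\psi_{\omega_1},\tilde v_{\omega_1})$ and hence $\mathcal F_{\omega_1}(\psi,v)\ge d(\omega_1)$, followed by the Taylor expansion of $d$ at $\omega$ together with $d'(\omega)=\mathcal Q(\tilde\psi_\omega,\tilde v_\omega)$ and $\mathcal H(\tilde\psi_\omega,\tilde v_\omega)=d(\omega)-\omega d'(\omega)$. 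Your explicit justification of the factor $\tfrac14$ via continuity of $d''$ at an intermediate point only makes precise what the paper asserts directly.
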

\begin{proof}
We begin by observing, from equations \eqref{JJ_w} and \eqref{d_w1rr}, that
\begin{equation}\label{27r}
\mathcal{H}(\psi, v) + \omega(\psi, v)\mathcal{Q}(\psi, v) = \frac{1}{2} \left( I_{\omega(\psi, v)}(\psi, v) + G(\psi, v) \right).
\end{equation}
Note that
$$ - \frac{4}{p} d(\omega(\psi, v)) = G(\psi, v),$$
and also
$$-\frac{4}{p} d(\omega(\psi, v)) = G(\tilde{\psi}_{\omega(\psi, v)}, \tilde{v}_{\omega(\psi, v)}), \quad (\tilde{\psi}_{\omega(\psi, v)}, \tilde{v}_{\omega(\psi, v)}) \in \tilde{\mathcal{G}}_{\omega(\psi, v)},$$
which implies
$$G(\psi, v) = G(\tilde{\psi}_{\omega(\psi, v)}, \tilde{v}_{\omega(\psi, v)}) .$$
In particular, if we let 
$$ 
c_0 = \left ( \frac{4}{p} d(\omega(\psi, v))\right )^{\frac1{p+2}}
$$
then $ G(\psi /c_0, v/c_0) = -1$, which implies that
$$
I_{\omega(\psi, v )}(\psi/c_0, v/c_0) \geq I_{\omega(\psi, v)}(\psi_{\omega(\psi, v)}, v_{\omega(\psi, v)}),
$$
since $(\psi_{\omega(\psi, v)}, v_{\omega(\psi, v)})$ is a minimizer of $I_{\omega(\psi, v)}$ under the constraint $G(\psi_{\omega(\psi, v)}, v_{\omega(\psi, v)}) = -1$. Hence, it is obtained from the form of $I_\omega$ and the definition of $d(\omega)$ in \eqref{d_w1} with \eqref{d_w} that $c_0 = ((2/(p+2))\mathcal{I}_\omega)^{1/p}$ and 
\begin{equation}\label{Iomega}
I_{\omega(\psi, v)}(\psi, v) \geq I_{\omega(\psi, v)}(c_0\psi_{\omega(\psi, v)}, c_0 v_{\omega(\psi, v)}) = I_{\omega(\psi, v)}(\tilde \psi_{\omega(\psi, v)}, \tilde v_{\omega(\psi, v)})\, ,
\end{equation}
where \eqref{zzzzz} has been used.

On the other hand, the function $d$ satisfies
$$d(\omega) = \mathcal{H}(\tilde{\psi}_{\omega}, \tilde{v}_{\omega}) + \omega \mathcal{Q}(\tilde{\psi}_{\omega}, \tilde{v}_{\omega}),$$
which implies, using the identity in \eqref{deri}, that
$$\mathcal{H}(\tilde{\psi}_{\omega}, \tilde{v}_{\omega}) = d(\omega) - \omega d^{\prime}(\omega).$$
Now,  using that $\omega(\psi, v) \in C^1$, Lemmas \ref{deri_a} and \ref{lem47}, the inequality \eqref{Iomega}, and the identity $$G(\psi, v) = G(\tilde{\psi}_{\omega(\psi, v)}, \tilde{v}_{\omega(\psi, v)}),$$ we deduce from \eqref{27r} that
$$
\begin{aligned}
\mathcal{H}(\psi, v) + \omega(\psi, v)\mathcal{Q}(\psi, v)
&= \frac{1}{2} \left( I_{\omega(\psi, v)}(\psi, v) + G(\psi, v) \right) \\
&\geq \frac{1}{2} \left( I_{\omega(\psi, v)}(\tilde{\psi}_{\omega(\psi, v)}, \tilde{v}_{\omega(\psi, v)}) + G(\tilde{\psi}_{\omega(\psi, v)}, \tilde{v}_{\omega(\psi, v)}) \right) \\
&= d(\omega(\tilde{\psi}_{\omega(\psi, v)}, \tilde{v}_{\omega(\psi, v)})) \\
&= d(\omega(\psi, v)) \\
&\geq d(\omega) + d^{\prime}(\omega)(\omega(\psi, v) - \omega) + \frac{1}{4} d^{{\prime}{\prime}}(\omega) |\omega(\psi, v) - \omega|^2 \\
&= \mathcal{H}(\tilde{\psi}_{\omega}, \tilde{v}_{\omega}) + \omega(\psi, v)\mathcal{Q}(\tilde{\psi}_{\omega}, \tilde{v}_{\omega}) + \frac{1}{4} d^{{\prime}{\prime}}(\omega) |\omega(\psi, v) - \omega|^2,
\end{aligned}$$
where the fifth line follows from Taylor’s expansion at $\omega$, and in the last line we used again the identity \eqref{deri}, that is, $d^{\prime}(\omega) = \mathcal{Q}(\tilde{\psi}_{\omega}, \tilde{v}_{\omega}) $. This concludes the proof.
\end{proof}
With this in hand, let us now prove the main result of the article.

\begin{proof}[Proof of Theorem \ref{main}] First, consider the following: let $U(t)$ be a global solution of the generalized $abcb$-Boussinesq system \eqref{1bbl} in the form
\begin{equation}\label{vetorial}
\begin{cases}
U(t)=(\eta(t),u(t)),&t>0,\\
U(0)=U_0=(\eta(0),u(0))& \text{in }X.
\end{cases}
\end{equation}
Now, suppose that the solution set $\tilde{\mathcal G}_\omega$ is unstable. Then, for a $\tilde U^\omega = (\tilde \psi_\omega, \tilde v _\omega) \in \tilde{\mathcal G}_\omega$,  there exists a sequence of initial data  $\{U_0^k\}_{k\in\mathbb{N}}\subset X$ and $\delta>0$, such that
$$
\lim_{k\rightarrow\infty}\|U_0^k-\tilde U^\omega \|_{X}=0 \quad \text{and} \quad  \
\inf_{\tilde V\in \tilde{\mathcal G}_{{\omega}}}\|U^k(t)-\tilde V\|_{X}\geq\delta\quad \mbox{for some } \ t > 0\, ,
$$
where $U^k$ denotes the sequence of solutions to the system \eqref{vetorial} with initial condition $U^k(0)=U_0^k$. By continuity in $t$, we can pick the first time $t_k$ such that,
\begin{equation}\label{contradiction}
\inf_{\tilde V\in\tilde {\mathcal G}_{{\omega}}}\|U^k(t_k)-\tilde V\|_{X}=\delta>0,
\end{equation}
where at least in the interval $[0,t_k]$ the solution $U^k$ exists.  Moreover, we have that $\mathcal H(U)$ and $\mathcal{Q}(U)$ are conserved at $t$ and continuous for $U(t)=(\eta(t),u(t))$, which implies that
$$
\left|\mathcal H\left(U^k\left(t_k\right)\right)-{\mathcal H}\big(\tilde U^{\omega}\big)\right|=\left|\mathcal H\left(U^k(0)\right)-{\mathcal H}\big (\tilde U^{\omega}\big )\right| \rightarrow 0,
$$
and
$$
\left|\mathcal{Q}\left(U^k\left(t_k\right)\right)-\mathcal{Q}\big(\tilde U^{\omega}\big)\right|=\left|\mathcal{Q}\left(U^k(0)\right)-\mathcal{Q}\big(\tilde U^{\omega}\big)\right| \rightarrow 0,
$$
as $k \rightarrow \infty$. In the following, if necessary, $\hat d''(\omega)$ has to be used instead of $d''(\omega)$. Now, pick $\delta$ small enough so that Lemma \ref{prelem} can be applied, which ensures that
\begin{equation}\label{r35}
\begin{split}
\mathcal H\left(U^k\left(t_k\right)\right)-\mathcal H\big(\tilde U^{\omega}\big)&+\omega\left(U^k\left(t_k\right)\right)\left(\mathcal{Q}\left(U^k\left(t_k\right)\right)-\mathcal{Q}\big(\tilde U^{\omega}\big)\right) \\
&\geq \frac{1}{4} d^{\prime \prime}(\omega)\left|\omega\left(U^k\left(t_k\right)\right)-\omega\right|^2.
\end{split}
\end{equation}
By the fact that $\omega\left(U^k\left(t_k\right)\right)$ is uniformly bounded for $k$, using \eqref{r35} and letting $k \rightarrow \infty$, it is obtained that 
$$
\omega\left(U^k\left(t_k\right)\right) \rightarrow \omega,
$$
and therefore,
\begin{equation}\label{r39}
\lim _{k \rightarrow \infty} G\left(U^k\left(t_k\right)\right)=-\frac{4}{p} \lim _{k \rightarrow \infty} d\left(\omega\left(U^k\left(t_k\right)\right)\right)=-\frac{4}{p}d(\omega).
\end{equation}
On the other hand,
\begin{equation}\label{r40}
\begin{split}
I_\omega\left(U^k\left(t_k\right)\right) +G\left(U^k\left(t_k\right)\right)=&2\left(\mathcal H\left(U^k\left(t_k\right)\right)+\omega \mathcal{Q}\left(U^k\left(t_k\right)\right)\right) \\
=&2\left(\mathcal H\left(U^k\left(0\right)\right)+\omega \mathcal{Q}\left(U^k\left(0\right)\right)\right)\, ,
\end{split}
\end{equation}
since the quantities of the right-hand side are conserved in $t$. Taking $k\to\infty$ in \eqref{r40} and using \eqref{r39}, as $U^{k}(0)$ is the initial data of \eqref{vetorial}, yield that
$$
I_{\omega}\left(U^k\left(t_k\right)\right) \rightarrow 2 d(\omega)+\frac{4}{p} d(\omega)=\frac{2(p+2)}{p} d(\omega)=I_{\omega}(\tilde U^{\omega}).
$$

Let
$$
Z_k\left(t_k\right)=\left(G\left(U^k\left(t_k\right)\right)\right)^{-\frac{1}{p+2}} U^k\left(t_k\right)\, .
$$
Noting that $G\left(Z_k\left(t_k\right)\right)=-1$ and making $k\to\infty$, we have that
\begin{align*}
I_\omega\left(Z_k\left(t_k\right)\right) & =\left(G\left(U^k\left(t_k\right)\right)\right)^{-\frac{2}{p+2}} I_\omega\left(U^k\left(t_k\right)\right) \\
& \to
\left((4/p) d (\omega) \right)^{-\frac{2}{p+2}}I_\omega\big( \tilde U^\omega\big) = I_\omega (\psi_\omega, v_\omega) = {\mathcal I}_\omega\, .
\end{align*}
Hence, $Z_k\left(t_k\right)$ is a minimizing sequence for \eqref{mp}. Therefore, there exists $U_1^\omega\in \mathcal{G}_\omega$ such that, after possible translations and subsequences,
$$
\lim _{k \rightarrow \infty}\left\|Z_k\left(t_k\right)-U_1^\omega\right\|_X=0,
$$
with $G\left(U_1^\omega\right)=-1$. Finally, since $\tilde U_1^\omega \in \tilde {\mathcal G}_\omega $,  the previous limit together with to the fact that
$$
\left\|U^k\left(t_k\right)-\tilde U_1^\omega\right\|_X=\left(G\left(U^k\left(t_k\right)\right)\right)^{\frac{1}{p+2}}\left\|\left(G\left(U^k\left(t_k\right)\right)\right)^{-\frac{1}{p+2}}\left(U^k\left(t_k\right)-\tilde U_1^\omega\right)\right\|_X
$$
gives us
\begin{equation*}
\begin{split}
 \lim _{k \rightarrow \infty}\left\|U^k\left(t_k\right)-\tilde U_1^\omega\right\|_X \leq &M (I_{\omega}(U^{\omega}))^{\frac{1}{p+2}}\lim _{k \rightarrow \infty}\left\|Z_k\left(t_k\right)-(G(U^k)(t_{k}))^{-\frac{1}{p+2}}\tilde U_1^{\omega}\right\|_X \\
  = &M (I_{\omega}(U^{\omega}))^{\frac{1}{p+2}}\lim _{k \rightarrow \infty}\left\|Z_k\left(t_k\right)-\left ((4/p)d(\omega)\right )^{-\frac{1}{p+2}}\tilde U_1^{\omega}\right\|_X \\
   = &M (I_{\omega}(U^{\omega}))^{\frac{1}{p+2}}\lim _{k \rightarrow \infty}\left\|Z_k\left(t_k\right)- U_1^{\omega}\right\|_X   =0,
\end{split}
\end{equation*}
which contradicts \eqref{contradiction}, and the result is shown.
\end{proof}

\appendix
\section{Properties of the transformed system}\label{A}
The goal of this appendix is to prove the properties of the solutions of the system \eqref{trav-ep}. The main result ensures that its derivative with respect to $\epsilon$ is bounded.
\begin{lem}\label{derivative}
The pair $\epsilon^{\frac{p-1}{p+1}}(z_{\epsilon},w_{\epsilon})$, where $(z, w)$ is a solution of \eqref{trav-ep}, is bounded in Sobolev space $H^1(\mathbb{R})$. Here, the subscript $\epsilon$ means the derivative concerning this variable.
\end{lem}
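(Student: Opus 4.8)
The plan is to pass to the regular parameter $\tau:=\epsilon^{2/(p+1)}$, so that $\omega=\sqrt{1-\tau}$ and $\epsilon^{(p-1)/(p+1)}\partial_\epsilon=\tfrac{2}{p+1}\partial_\tau$; the statement is then equivalent to the boundedness of $(\partial_\tau z^\epsilon,\partial_\tau w^\epsilon)$ in $H^1(\mathbb{R})\times H^1(\mathbb{R})$ as $\tau\to0^+$, while for $\tau$ in a compact subset of $(0,1)$ there is nothing to prove. Heuristically this is exactly what one expects, since by Theorem~\ref{conver} the profiles $(z^\epsilon,w^\epsilon)$ equal the fixed generalized-KdV profile $(w_0,w_0)$ up to a correction of size $\tau$, whence $\partial_\epsilon z^\epsilon\approx(\partial_\epsilon\tau)\cdot(\text{bounded})=O(\epsilon^{-(p-1)/(p+1)})$. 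To make this rigorous, write \eqref{trav-ep} in the sum/difference variables $S=z+w$, $\phi=w-z$: it becomes
\begin{equation*}
\tfrac{1-\omega}{\tau}\,S+\bigl(\omega b+\tfrac{a+c}{2}\bigr)S''+\tfrac{a-c}{2}\,\phi''+N_1+N_2=0,\qquad
\tfrac{1+\omega}{\tau}\,\phi-\mu_0\,\phi''=-\tfrac{a-c}{2}\,S''-(N_1-N_2),
\end{equation*}
where $\mu_0=\omega b-\tfrac{a+c}{2}>0$ and $N_1,N_2$ are the (smooth in $S,\phi,\mathcal I^\epsilon$) nonlinear terms of \eqref{trav-ep}. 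Since $(1-\omega)/\tau$ is a smooth function of $\tau$ with value $\tfrac12$ at $\tau=0$, setting $\chi:=\phi/\tau$ turns the second equation into $(1+\omega)\chi-\tau\mu_0\chi''=-\tfrac{a-c}{2}S''-(N_1-N_2)$, so in the variables $(\tau,S,\chi)$ the whole system extends to a smooth (indeed real-analytic) family up to $\tau=0$, whose limit consists of the generalized-KdV ground-state equation \eqref{kdv-t} for $S_0=2w_0$ together with the algebraic relation $2\chi_0=-\tfrac{a-c}{2}S_0''-(N_1-N_2)\big|_{z=w=w_0}$.

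Before invoking the implicit function theorem I would establish the preliminary estimate that $(z^\epsilon,w^\epsilon)$ is bounded in $H^2(\mathbb{R})\times H^2(\mathbb{R})$ and $\|\phi\|_{L^2}=O(\tau)$ (so that $\chi$ is bounded in $L^2$). This follows from a pair of coupled energy estimates: testing the second displayed equation against $-\phi''$ gives $\|\phi''\|_{L^2}\le\tfrac{|a-c|}{2\mu_0}\|S''\|_{L^2}+O(1)$, while solving the first displayed equation for $S''$ gives $\|S''\|_{L^2}\le\tfrac{|a-c|/2}{|\omega b+(a+c)/2|}\|\phi''\|_{L^2}+O(1)$, and the product of the two structural constants equals $\dfrac{(a-c)^2}{(a+c)^2-4\omega^2b^2}=\dfrac{(a-c)^2}{(a-c)^2+4(ac-\omega^2b^2)}<1$, precisely because $2b<-a-c$ (which makes $\omega b+\tfrac{a+c}{2}$ and $\mu_0$ nonzero of opposite signs) and $\omega^2b^2<b^2\le ac$. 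Hence the loop closes, $\|S''\|_{L^2},\|\phi''\|_{L^2}=O(1)$, and then testing the second equation against $\phi$ and using $\|S''\|_{L^2}=O(1)$ yields $\|\phi\|_{L^2}\le\tfrac{\tau}{1+\omega}\,O(1)=O(\tau)$; the $H^2$ bound and the $L^2$ bound on $\chi$ follow, and the right-hand sides of the two displayed equations are $O(1)$ in $L^2$.

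Finally I would apply the implicit function theorem to the $(\tau,S,\chi)$-system at $\tau=0$, working in the subspace of even functions — solutions of \eqref{trav-eqs}, hence of \eqref{trav-ep}, may be taken even, which removes the one-dimensional translation kernel spanned by the odd function $w_0'$ — and either along the constraint $G=-1$ or carrying $\mathcal I^\epsilon$ as an auxiliary scalar pinned by that constraint. At $\tau=0$ the linearization is block-triangular: the $S$-block is the linearization of \eqref{kdv-t} at $w_0$, which is invertible on even functions orthogonal to the constraint direction by the non-degeneracy of the generalized-KdV ground state, and the $\chi$-block is multiplication by $2$. Thus there is a unique $C^1$ (analytic) branch $\tau\mapsto(S(\tau),\chi(\tau))$ through $(2w_0,\chi_0)$; by local uniqueness together with the convergence $(z^\epsilon,w^\epsilon)\to(w_0,w_0)$ from Theorem~\ref{conver} and the boundedness of $\chi$, this branch coincides with $\bigl(z^\epsilon+w^\epsilon,\ (w^\epsilon-z^\epsilon)/\tau\bigr)$ for small $\tau$. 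Consequently $\tau\mapsto(z^\epsilon,w^\epsilon)$ is $C^1$ near $0$ with $\partial_\tau$-derivative bounded in $H^1(\mathbb{R})\times H^1(\mathbb{R})$, and the chain rule $\partial_\epsilon=\tfrac{2}{p+1}\epsilon^{-(p-1)/(p+1)}\partial_\tau$ gives the claim. The main obstacle is the functional-analytic bookkeeping near $\tau=0$: one must choose the decomposition and the scale of spaces — with the difference/$\chi$-component sitting one derivative below the sum component — so that the singular factor $1/\tau$ in \eqref{trav-ep} is completely absorbed and the family is genuinely differentiable at $\tau=0$, and one must simultaneously dispose of the translation degeneracy; the coupled energy estimates above and the spectral input are then routine.
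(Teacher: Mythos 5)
Your reduction to the regular parameter $\tau=\epsilon^{2/(p+1)}$ and the chain rule $\epsilon^{(p-1)/(p+1)}\partial_\epsilon=\tfrac{2}{p+1}\partial_\tau$ are correct, and your preliminary coupled energy estimate is sound and in fact cleaner than what the paper offers at the corresponding point: the product of the two structural constants is $(a-c)^2/\bigl((a-c)^2+4(ac-\omega^2b^2)\bigr)<1$ under \eqref{relation_c}, so the loop closes and one gets the uniform $H^2$ bound and $\|w-z\|_{L^2}=O(\tau)$. Your route (sum/difference variables plus an implicit-function-theorem argument in $(\tau,S,\chi)$) is genuinely different from the paper's, which eliminates one unknown algebraically, collapses \eqref{trav-ep} to a single fourth-order ODE with the small parameter on $w''''$, differentiates that equation in $\epsilon$, and then inverts the singular linear part \emph{explicitly} by variation of parameters with the two rates $r_1=O(1)$, $r_2=O(\tau^{-1/2})$, reducing to a second-order integro-differential equation solved by a contraction mapping in the space of even $H^1$ functions with $\epsilon$-uniform constants.

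The gap is at your central step. The implicit function theorem, as invoked, does not apply at $\tau=0$: the $\chi$-equation $(1+\omega)\chi-\tau\mu_0\chi''=f$ is a singular perturbation, and there is no pair of fixed Sobolev spaces on which the family $\tau\mapsto\bigl((1+\omega(\tau))-\tau\mu_0\partial_x^2\bigr)$ is continuous in operator norm up to $\tau=0$ with invertible limit. If $\chi$ is taken in $H^{k}$ with target $H^{k}$, the map $\chi\mapsto\tau\chi''$ is unbounded for each $\tau>0$; if $\chi$ is taken in $H^{k+2}$ with target $H^{k}$, the limiting operator (multiplication by $2$) is injective but not surjective, so the linearization at $\tau=0$ is not an isomorphism and the IFT gives nothing. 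The same issue reappears when you try to differentiate the branch: $\partial_\tau(\tau\chi'')=\chi''+\tau\partial_\tau\chi''$, and $\chi''$ is not controlled by your estimates ($\chi$ is only bounded in $L^2$). This is not ``routine bookkeeping''; it is precisely the content of the paper's Appendix \ref{A}, which circumvents the failure of the IFT by writing the singular resolvent as an explicit convolution kernel, verifying that the resulting integral operator is bounded on $H^1$ uniformly in $\epsilon$ (Lemma \ref{SC}, quoting \cite{SunChen}), and closing with a uniform contraction whose small constant $C_0(\epsilon)\to0$ comes from the decay of the coefficients, not from the IFT. To repair your argument you would have to replace the IFT by such a uniform-contraction scheme — for instance, solve the $\chi$-equation by the resolvent, substitute into the $S$-equation, and run a fixed-point argument for $\partial_\tau S$ with constants uniform in $\tau$ — together with a bootstrap giving uniform $H^4$ bounds on $S$ so that $\tau$-differentiation of the resolvent applied to the right-hand side stays bounded. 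A secondary, fixable omission: the nonlinear terms contain the Lagrange multiplier $\mathcal I^{\epsilon}$, whose $\tau$-derivative must be controlled simultaneously (your parenthetical about ``pinning'' it by the constraint needs to become an actual bordered system).
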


For the sake of simplicity, we will omit $\epsilon$ in the solution forms. Thus, to show this lemma, let us first consider the change of variable  $z=\omega w+\epsilon^{\frac{2}{p+1}}\xi$ and replace the equations in \eqref{trav-ep} by
\begin{equation}
\left\{
\begin{array}{rl}
w-\omega \xi+\omega^{2} bw''+\epsilon^{\frac2{p+1}}\omega b\xi''+aw''+\mathcal I^{\ep}\left(\frac2{p+2}\right)  (\omega w +\epsilon^{\frac2{p+1}}\xi)w^{p}& =0, \\
\\ \label{trav-epa}
\xi+b\omega w''+c\omega w''+c\epsilon^{\frac2{p+1}}\xi''+\mathcal I^{\ep}\left(\frac{2}{(p+1)(p+2)}\right)  w^{p+1}&=0.
\end{array}\right.
\end{equation}
Multiplying the first equation of  \eqref{trav-epa} by $c$ and the second one by $b\omega$ yields that
\begin{equation*}
\left\{
\begin{array}{rl}
cw-c\omega \xi+c\omega^{2} bw''+c\epsilon^{\frac2{p+1}}\omega b\xi''+acw''+c\mathcal I^{\ep}\left(\frac2{p+2}\right)  (\omega w +\epsilon^{\frac2{p+1}}z\xi)w^{p}& =0,
\\ \\
b\omega\xi+\omega^{2} b^{2}w''+c\omega^{2}b w''+c\epsilon^{\frac2{p+1}}b\omega\xi''+b\omega\mathcal I^{\ep}\left(\frac{2}{(p+1)(p+2)}\right)  w^{p+1}&=0.
\end{array}\right.
\end{equation*}
Subtracting in the previous system the first equation with the second one,  we have that
\begin{equation*}
\begin{split}
cw-c\omega\xi+acw''+&c\mathcal I^{\ep}\left(\frac2{ p+2 }\right)\omega w^{p+1} +c\mathcal I^{\ep}\left(\frac2{p+2}\right)\epsilon^{\frac2{p+1}}\xi w^{p}\\&-bw\xi-\omega^{2}b^{2}w''-b\omega\mathcal I^{\ep}\left(\frac{2}{(p+1)(p+2) }\right)  w^{p+1}=0\, ,
\end{split}
\end{equation*}
that is,
\begin{equation*}
\begin{split}
&\xi\left(-\omega(b+c)+c\epsilon^{\frac2{p+1}}\mathcal I^{\ep}\left(\frac2{p+2}\right)w^{p}\right)=-w''(ac-\omega^{2}b^{2}) -cw\\&\qquad\qquad  +\omega\left(b\mathcal I^{\ep}\left(\frac{2}{(p+1)(p+2)}\right)-c\mathcal I^{\ep}\left(\frac2{p+2}\right)\right) w^{p+1}.
\end{split}
\end{equation*}
Therefore,
$$
\xi=\frac{-w''(ac-\omega^{2}b^{2}) -cw +\omega \left(b\mathcal I^{\ep}\left(\frac{2}{(p+1)(p+2)}\right)-c\mathcal I^{\ep}\left(\frac2{p+2}\right)\right)w^{p+1}}{-\omega(b+c)+c\epsilon^{\frac2{p+1}}\mathcal I^{\ep}\left(\frac2{p+2}\right)w^{p}}.
$$
%Since $\ep^{\frac2{p+1}}\xi''$ is in $L^2(\mathbb{R})$, we have that $\ep^{\frac2{p+1}}w''''\in L^2(\mathbb{R})$ and its norm is uniformly bounded, which gives that $\ep^{\frac1{p+1}}w'''\in L^2(\mathbb{R})$ and its norm is uniformly bounded.
From now on, to make the computation clear, consider the function $\xi$ as
\begin{equation}\label{xi}
\xi=-Aw''+Bw+Cw^{p+1},
\end{equation}
where
$$A:=A(w)=\frac{ac-\omega^{2}b^{2}}{-\omega(b+c)+c\epsilon^{\frac2{p+1}}\mathcal I^{\ep}\left(\frac2{p+2}\right)w^{p}},$$
$$B:=B(w)=\frac{-c}{-\omega(b+c)+c\epsilon^{\frac2{p+1}}\mathcal I^{\ep}\left(\frac2{p+2}\right)w^{p}}\, ,$$
and
$$C:=C(w)=\frac{\omega\left(b\mathcal I^{\ep}\left(\frac{2}{(p+1)(p+2)}\right)-c\mathcal I^{\ep}\left(\frac2{p+2}\right)\right)}{-\omega(b+c)+c\epsilon^{\frac2{p+1}}\mathcal I^{\ep}\left(\frac2{p+2}\right)w^{p}}.$$
Then, differentiating the relation \eqref{xi} twice with respect to $x$ yields that
\begin{equation*}
\begin{split}
\xi''=&-(A''w''+2A'w'''+Aw'''')+(B''w+2B'w'+Bw'')\\&+C''w^{p+1}+2C'(p+1)w^{p}+C(p+1)pw^{p-1},
\end{split}
\end{equation*}
where the superscript $'$ in $A, B$ and $C$ indicates the derivative(s) with respect to $x$ and $A'$ will introduce a factor $\ep^{\frac2{p+2}}$. Hence, replacing \eqref{xi} and $\xi''$ in the second equation of \eqref{trav-epa} gives us
\begin{equation*}
\begin{split}
&-Aw''+Bw+Cw^{p+1}+\omega(b+c)w''+c\epsilon^{\frac2{p+1}}\left(-(A''w''+2A'w'''+Aw'''')\right.\\&+\left.(B''w+2B'w'+Bw'')+C''w^{p+1}+2C'(p+1)w^{p}+C(p+1)pw^{p-1}\right)\\&+\mathcal I^{\ep}\left(\frac{2}{(p+1)(p+2)}\right)  w^{p+1}=0,
\end{split}
\end{equation*}
and arranging similar terms finds that
\begin{equation*}
\begin{split}
&-c\epsilon^{\frac2{p+1}}Aw''''-2\epsilon^{\frac2{p+1}}cA'w'''+\left(-A+\omega(b+c)+c\epsilon^{\frac2{p+1}}(-A''+B)\right)w''\\&+2c\epsilon^{\frac2{p+1}}B'w'+(B+c\epsilon^{\frac2{p+1}}B'')w+C(p+1)pc\epsilon^{\frac2{p+1}}w^{p-1}+
2C'(p+1)c\epsilon^{\frac2{p+1}}w^{p}\\
&+\left(C+c\epsilon^{\frac2{p+1}}C''+\mathcal I^{\ep}\left(\frac{2}{(p+1)(p+2)}\right)\right)w^{p+1}=0,
\end{split}
\end{equation*}
or equivalently,
\begin{equation}\label{A2}
\begin{split}
-c\epsilon^{\frac2{p+1}}Aw''''+(-A+\omega(b+c))w''+B w+\left(C+\mathcal I^{\ep}\left(\frac{2}{p+2}\right)\right)w^{p+1}\\=\epsilon^{\frac{2}{(p+1)(p+2)}}\mathcal{P},
\end{split}
\end{equation}
where
\begin{equation*}
\begin{split}
\mathcal{P}[w]=&2cA'w'''-\left(c(-A''+B)\right)w''-2cB'w'-cB''w\\&-C(p+1)pcw^{p-1}-2C'(p+1)cw^{p}-cC''w^{p+1}.
\end{split}
\end{equation*}
Here, we note that by the assumptions on  $a, b, c$ and the fact that $w $ is uniformly bounded in $H^1(\R)$ for $\ep > 0$ small, the linear part of \eqref{A2} is uniformly invertible, which
implies from the classical theory of ordinary differential equation that $w $ is uniformly bounded in $H^2(\R)$, $\ep^{\frac2{p+1}}w''''\in L^2(\mathbb{R})$ and its norm is uniformly bounded together with $\ep^{\frac1{p+1}}w'''\in L^2(\mathbb{R})$ and its norm being uniformly bounded.

We are now in a position to prove that, for $\epsilon>0$ small enough, $\epsilon^{\frac{p-1}{p+1}}w_{\epsilon}$ and $\epsilon^{\frac{p-1}{p+1}}z_{\epsilon}$ are bounded.

\begin{proof}[Proof of Lemma \ref{derivative}] Rewrite \eqref{A2} in the following form
\begin{equation}\label{A3}
\epsilon^{\frac2{p+1}}\alpha w''''+ \lambda w''+ w+\beta w^{k}=\epsilon^{\frac{2}{p+1}}\mathcal{\tilde{P}}[w],
\end{equation}
with $k=p+1 > 1$ and $\mathcal{\tilde{P}}(\omega,w,w',w'',w''',A',A''',B,B',B'',C',C'',p,c):=\mathcal{\tilde{P}}$, where
\begin{equation*}
\begin{split}
\alpha=&ac-b^2 >0,\quad \lambda = a+b+2b = \frac13 - \sigma < 0,\quad
\beta=\frac2{p+1} {\mathcal I}^{\ep}\, ,
\end{split}
\end{equation*}
and $\mathcal{\tilde{P}}$ is the remaining term and has a similar form with $={\mathcal{P}[w]}$.

We note that $(z, w)$ is the solution of \eqref{trav-ep} and $w$ is a solution of \eqref{A3}. By the theory of ordinary differential equations, due to the symmetry of the equation \eqref{A3} with respect to $x$, it can be deduced that
any solution of \eqref{A3} is even in $x$ after a suitable translation in $x$-variable. Moreover, if $\ep > 0$, the equation \eqref{trav-ep} or \eqref{A3} is differentiable with respect to $\ep$, which implies that the solution $w$ is also differentiable with respect to $\ep$. Now, taking the derivative in terms of $\epsilon$  on both sides of \eqref{A3} and, after that,  multiplying the result by $\epsilon^{\frac{p-1}{p+1}}$, we have
\begin{equation}\label{A3a}
\begin{split}
\epsilon\alpha w''''_{\epsilon}+ \epsilon^{\frac{p-1}{p+1}}\lambda w''_{\epsilon}+  \epsilon^{\frac{p-1}{p+1}}w_{\epsilon}+\beta \epsilon^{\frac{p-1}{p+1}}kw^{k-1}  w_{\epsilon}=&\mathcal{\tilde{\tilde{P}}}_1[w, \epsilon] L [\epsilon w_\ep] + \mathcal{\tilde{\tilde{P}}}_2[w, \epsilon] \\=& \mathcal{\tilde{\tilde{P}}}[w, w_\ep, \epsilon],
\end{split}
\end{equation}
where $\mathcal{\tilde{\tilde{P}}}_1[w, \epsilon] $ and $ \mathcal{\tilde{\tilde{P}}}_2[w, \epsilon]$ are functions that only depend on $w$ and $\epsilon$ and are uniformly bounded in any Sobolev norms as $\ep$ small and $\ep\to 0$, and $L [\epsilon w_\ep]$ is linear in terms of $\epsilon w_\ep$ or its $x$-derivatives . Let $\hat w = \epsilon^{\frac{p-1}{p+1}} w_\epsilon $, which changes \eqref{A3a} to
\begin{equation}\label{A3aa}
\begin{split}
\epsilon^{\frac 2 {p+1} } \alpha \hat w''''+ \lambda \hat w'' +  \hat w+\beta kw^{k-1}  \hat w=&\mathcal{\tilde{\tilde{P}}}_1[w, \epsilon] \epsilon^{\frac 2 {p+1} }  L [\hat w] + \mathcal{\tilde{\tilde{P}}}_2[w, \epsilon] \\=& \mathcal{\tilde{\tilde{P}}}[w, \hat w, \epsilon].
\end{split}
\end{equation}

Consider the linear equation associated with \eqref{A3aa},
$$
\epsilon^{\frac 2 {p+1} } \alpha \hat w'''' - |\lambda| \hat w'' +  \hat w=0\, .
$$
The characteristic equation associated with the linear equation is
$$
\alpha \epsilon^{\frac2{p+1}} r^4-|\lambda | r^2 +1=0,
$$
with roots $\pm r_1$ and $\pm r_2 $ and
$$
r_1=\sqrt{\frac{|\lambda |-\sqrt{|\lambda |^2 -4\alpha\epsilon^{\frac2{p+1}}}}{2\alpha\epsilon^{\frac2{p+1}}}} \quad\text{and}\quad
r_2=\sqrt{\frac{|\lambda |+\sqrt{|\lambda |^2 -4\alpha\epsilon^{\frac2{p+1}}}}{2\alpha\epsilon^{\frac2{p+1}}}} \, ,
$$
satisfying $$r_1^2+r_2^2= \frac{1}{\alpha \epsilon^{\frac2{p+1}}}.$$ Using a variation of parameters, the bounded even solutions of \eqref{A3aa} can be written as follows
\begin{equation*}
\begin{split}
\hat w(x)= & -\frac{1}{2 \alpha r_1 \epsilon^{\frac2{p+1}} \left(r_1^2+r_2^2\right)} \int_{-\infty}^{+\infty} e^{-r_1|x-\xi|}\left(-\beta  kw^{k-1}(\xi)\hat w(\xi)+ \mathcal{\tilde{\tilde{P}}}[w, \hat w, \epsilon](\xi)\right) d \xi \\
& -\frac{1}{2 \alpha r_2 \epsilon^{\frac2{p+1}} \left(r_1^2+r_2^2\right)} \int_{-\infty}^{+\infty} e^{-r_2|x-\xi|}\left(-\beta  k w^{k-1}(\xi)\hat w(\xi)+ \mathcal{\tilde{\tilde{P}}}[w, \hat w, \epsilon](\xi)\right) d \xi.
\end{split}
\end{equation*}
By differentiating the previous equality twice with respect to $x$, we obtain
$$
\begin{aligned}
- |\lambda| ( \hat w_{x x}-r_1^2\hat w ) =&  \frac{ - \beta kw^{k-1}(x)\hat w (x)}{ \alpha \epsilon^{\frac2{p+1}}\left(r_1^2+r_2^2\right)}
+\frac{\mathcal{\tilde{\tilde{P}}}(x)}{\alpha \epsilon^{\frac2{p+1}}\left(r_1^2+r_2^2\right)}\\
& +\frac{1}{2 \alpha r_2 \epsilon^{\frac2{p+1}}\left(r_1^2+r_2^2\right)} \int_{-\infty}^{+\infty} e^{-r_2|x-\xi|}\left(\beta
\left(\left(kw^{k-1}(\xi)\hat w(\xi)\right)_{\xi \xi}\right.\right.\\&\left.\left.\qquad\qquad\qquad\qquad\qquad\qquad\qquad\qquad\qquad-r_1^2 kw^{k-1}(\xi)\hat w(\xi)\right)\right) d \xi \\
&+\frac{1}{2\alpha  r_2 \epsilon^{\frac2{p+1}}\left(r_1^2+r_2^2\right)} \int_{-\infty}^{+\infty} e^{-r_2|x-\xi|}
\mathcal{\tilde{\tilde{P}}}(\xi)d \xi \\&-\frac{r_2^2-r_1^2}{2 r_2 \epsilon^{\frac2{p+1}} \left(r_1^2+r_2^2\right)} \int_{-\infty}^{+\infty} e^{-r_2|x-\xi|} \mathcal{\tilde{\tilde{P}}}(\xi) d \xi\\
=&-\left(\frac{\beta   k}{\alpha \epsilon^{\frac2{p+1}}\left(r_1^2+r_2^2\right)}\right) w^{k-1}(x)\hat w(x)+ f_1(x)\\=& -\beta   k w^{k-1}(x)\hat w(x)+ f_1(x)\, .
\end{aligned}
$$
Here, we remark that the solution of \eqref{A3}, which goes to zero at infinity, can also be rewritten in the above form of a second-order integro-differential equation. Then, such a solution must be even in terms of $x$ after a translation, which is
another proof of evenness of $(z, w)$ for \eqref{trav-ep}. Now, we rewrite the above $\hat w$ equation as
\begin{equation}\label{A7}
\begin{split}
&  \hat w_{x x}- |\lambda |^{-1} \hat w - |\lambda|^{-1}  \beta _0  k w_0^{k-1}(x)\hat w (x)= \left ( r_1^2 -|\lambda |^{-1} \right ) \hat w \\
&\qquad - |\lambda|^{-1}  k \left (\beta_0   w_0^{k-1}(x)  -\beta   w^{k-1}(x) \right ) \hat w (x) -|\lambda |^{-1} f_1(x) = f_2(x)\, ,
\end{split}
\end{equation}
where $\beta_0 = \frac2{p+1} {\mathcal J}^0$, $w_{0}(x)$ is a solution of the generalized KdV equation (see Theorem \ref{conver}), and
$$
\left | r_1^2 -|\lambda |^{-1} \right | + \left |\beta_0 w_0^{k-1}(x)  -\beta  w^{k-1}(x) \right | \to 0
$$
as $\ep \to 0$. We mention  that the terms involving $\hat w$ or its derivatives in $f_2(x)$ are linear in terms of $\hat w$ or its derivatives and the coefficients,
which may have $\ep$ or $w$ or $w_0$, will go to zero as $\ep \to 0$.   Since $w$ is even in $x$, the following claim is verified:
\begin{claim}\label{ID} \eqref{A7} can be transformed into an integro-differential equation as:
\begin{equation}\label{A10}
\begin{aligned}
\hat w(x) & =\Xi_1(x) \int_0^x \Xi_2(s) f_2(s) d s+\Xi_2(x) \int_x^{\infty} \Xi_1(s) f_2(s) d s \\
& =\int_0^{+\infty} K(x, s) f_2(s) d s
=\mathcal{L}_{\epsilon}\left[f_2\right](x),
\end{aligned}
\end{equation}
for $x \geq 0$, which can be evenly extended to $x<0$, for appropriated functions $\Xi_1$ and $\Xi_2$.
\end{claim}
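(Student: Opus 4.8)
The plan is to read \eqref{A7} as the inhomogeneous linear equation $\hat w'' - q(x)\,\hat w = f_2(x)$ on $[0,\infty)$, where $q(x) = |\lambda|^{-1}\bigl(1 + \beta_0 k\, w_0^{k-1}(x)\bigr)$ is smooth, even (since $w_0$ is even by Theorem~\ref{conver} together with the uniqueness of the homoclinic KdV profile), and converges exponentially fast to $|\lambda|^{-1}>0$ as $|x|\to\infty$, because $w_0$ decays exponentially. The whole point is to build a fundamental system $\{\Xi_1,\Xi_2\}$ of the homogeneous equation $\phi''=q\,\phi$ adapted to the boundary conditions ``$\phi'(0)=0$ and $\phi$ bounded at $+\infty$'', and then to recognize the right-hand side of \eqref{A10} as the associated variation-of-parameters formula and its kernel.

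First I would produce the decaying solution explicitly rather than abstractly: differentiating \eqref{kdv-t} once in $x$ shows that $w_0'$ solves $(w_0')''=q\,w_0'$ \emph{exactly}, and $w_0'$ is odd, decays exponentially at $\pm\infty$, with $w_0'(0)=0$ and $w_0''(0)\neq 0$ because $0$ is the unique extremum of $w_0$. I would therefore set $\Xi_1 := w_0'/w_0''(0)$, so that $\Xi_1(0)=0$ and $\Xi_1'(0)=1$; by standard asymptotic ODE theory (Levinson's theorem) the bounded solution at $+\infty$ is unique up to a scalar, hence proportional to $\Xi_1$. Then I would let $\Xi_2$ be the solution with $\Xi_2(0)=1$, $\Xi_2'(0)=0$: evenness of $q$ forces $\Xi_2$ even, linear independence from $\Xi_1$ forces a nonzero component on the growing mode $e^{\sqrt{|\lambda|^{-1}}\,x}$, so $\Xi_2(x)$ grows like $e^{\sqrt{|\lambda|^{-1}}\,x}$ as $x\to+\infty$, and the (constant) Wronskian is $W=\Xi_1\Xi_2'-\Xi_1'\Xi_2=-\Xi_1'(0)\Xi_2(0)=-1$.

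With these in hand I would define $\hat w$ on $[0,\infty)$ by the right-hand side of \eqref{A10} and check everything by direct computation: two differentiations give $\hat w''-q\,\hat w=-Wf_2=f_2$; at the origin $\hat w'(0)=\Xi_1'(0)\cdot 0+\Xi_2'(0)\int_0^\infty \Xi_1 f_2\,ds=0$ since $\Xi_1(0)=\Xi_2'(0)=0$; and $\hat w$ is bounded because $f_2$ decays exponentially (each of its terms is a bounded multiple of $\hat w$ or its derivatives, of $f_1$, or of the small coefficients $r_1^2-|\lambda|^{-1}$ and $\beta_0 w_0^{k-1}-\beta w^{k-1}$, all controlled by the uniform $H^2$-bound on $w$ established above and by Theorem~\ref{conver}), so the decay of the tail $\int_x^\infty\Xi_1 f_2\,ds$ beats the growth of $\Xi_2(x)$ and the indeterminate product $\Xi_1(x)\int_0^x\Xi_2 f_2\,ds$ stays exponentially small. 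Finally, since $q$ and $f_2$ are even and $\hat w'(0)=0$, the even reflection $\hat w(-x)=\hat w(x)$ is $C^1$ at $0$ and solves the same equation on all of $\mathbb{R}$; collecting the two pieces gives \eqref{A10} with $K(x,s)=\Xi_1\bigl(\max\{x,s\}\bigr)\,\Xi_2\bigl(\min\{x,s\}\bigr)$ and $\mathcal L_\epsilon[g]=\int_0^\infty K(\cdot,s)\,g(s)\,ds$. Since $f_2$ itself depends linearly on $\hat w$ and its derivatives, \eqref{A10} is to be read as the fixed-point identity $\hat w=\mathcal L_\epsilon[f_2(\cdot;\hat w)]$, which is what the remainder of the proof of Lemma~\ref{derivative} closes.

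The step I expect to be the real obstacle is not the variation-of-parameters algebra but showing that formula \eqref{A10} actually selects the \emph{bounded} solution: because $\Xi_2$ grows exponentially, one must exhibit the precise cancellation between the two terms of \eqref{A10} that kills this growth, and this relies on the a~priori exponential decay of $f_2$, hence of $\hat w$, making the scheme self-consistent. A secondary difficulty is uniformity in $\epsilon$: although $\Xi_1,\Xi_2,K$ are built from the fixed limit profile $w_0$, the source $f_2$ carries $\epsilon$-dependent coefficients, so one must set up the kernel estimates for $\mathcal L_\epsilon$ to be uniform as $\epsilon\to 0^+$, which is exactly what makes this representation useful.
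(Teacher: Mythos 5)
Your proposal is correct and follows essentially the same route as the paper: both take $\Xi_1$ proportional to $w_0'$ (the odd, exponentially decaying homogeneous solution obtained by differentiating the limiting KdV profile equation \eqref{kdv-t}), complete it to a fundamental system with an even $\Xi_2$ of constant Wronskian, and read \eqref{A10} as the resulting variation-of-parameters/Green's-function representation extended evenly to $x<0$. Your additional checks — that the formula selects the bounded solution despite the growth of $\Xi_2$, that $\hat w'(0)=0$ makes the even reflection consistent, and that the identity is a fixed-point equation since $f_2$ depends on $\hat w$ — are exactly the points the paper leaves implicit and defers to the subsequent contraction argument.
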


Indeed, note that
$$
\Xi_1(x)=\frac{d}{dx} w_{0}
$$ is an odd solution of the homogenous equation for \eqref{A7} with $w_{0}(x)$ as a solution of the generalized KdV equation  such that $\Xi_1 \rightarrow-\exp \left(-|\lambda|^{-1/2} |x|\right)$ as $|x| \rightarrow \infty$. Using the Liouville formula, we have the existence of an even function $\Xi_2(x)$ such that $\{\Xi_1(x),\Xi_2(x)\}$ form a fundamental set of solutions to \eqref{A7} with Wronskian $W\left[\Xi_1, \Xi_2\right]=\Xi_1(x) \Xi_2^{\prime}(x)-\Xi_1^{\prime}(x) \Xi_2(x)=1$.
Therefore, by constructing Green's function $K(x, s)$ using $\Xi_1$ and $\Xi_2$, \eqref{A7} can be transformed into the integral equation \eqref{A10}, giving Claim \ref{ID}.
\vspace{0.1cm}

With this claim, we can apply the contraction mapping theorem to the integro-differential equation \eqref{A10}. To do this, we let the Banach space be the Sobolev space $H^1(\mathbb{R})$ with the corresponding Sobolev norm and define
$$
B_{1}=\left\{f(x) \in H^1(\mathbb{R}) \mid f(-x)=f(x),\ \|f\|_{H^1(\mathbb{R}) } = \|f\|_{B_1 }<\infty\right\} .
$$
Then, applying a similar proof as done in \cite[Section 3]{SunChen}, the following estimate holds for \eqref{A10}.
\begin{lem}\label{SC}
If $f(x) \in B_{1}$, then $$\mathcal{L}[f](x) \in B_1\quad\text{and}\quad \|\mathcal{L}[f](x)\|_{B_1} \leq  \tilde C\|f\|_{B_1}\, ,$$
where $\tilde C$ is independent of $\ep$.
\end{lem}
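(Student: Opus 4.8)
\textbf{Proof proposal for Lemma \ref{SC}.}
The plan is to read off the mapping properties of $\mathcal{L}$ from the fact that, by Claim \ref{ID}, $\mathcal{L}$ is the solution operator (on even functions) of the \emph{$\epsilon$-independent} second-order problem $\mathcal{S}\hat w := \hat w_{xx} - |\lambda|^{-1}\hat w - |\lambda|^{-1}\beta_0 k\, w_0^{k-1}\hat w = f$, where $\lambda=\frac13-\sigma<0$, $\beta_0=\frac2{p+1}\mathcal{J}^0$, $k=p+1$, and $w_0$ is the fixed KdV profile from \eqref{kdv-t}. Since the coefficients of $\mathcal{S}$ do not depend on $\epsilon$, neither do the fundamental pair $\Xi_1,\Xi_2$, the Green kernel $K(x,s)$ of \eqref{A10}, nor the resulting operator norm; all the $\epsilon$-dependence of \eqref{A10} is confined to the datum $f=f_2$. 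Thus it suffices to prove a single kernel estimate.

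First I would record the asymptotics of the fundamental solutions. Differentiating \eqref{kdv-t} shows that $\Xi_1=\frac{d}{dx}w_0$ solves $\mathcal{S}\Xi_1=0$; it is odd (since $w_0$ is even), has a simple zero at $x=0$, and $\Xi_1(x)\to -e^{-\mu|x|}$ as $|x|\to\infty$ with $\mu:=|\lambda|^{-1/2}$. The companion $\Xi_2$ built by the Liouville/Wronskian procedure recalled before Claim \ref{ID} is even with $W[\Xi_1,\Xi_2]=1$, and matching the Wronskian against $\Xi_1\sim -e^{-\mu|x|}$ at infinity forces $\Xi_2(x)\sim -\frac1{2\mu}e^{\mu|x|}$. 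Because the potential $w_0^{k-1}$ in $\mathcal{S}$ is exponentially localized, $\mathcal{S}$ reduces at infinity to the constant-coefficient operator $\partial_x^2-\mu^2$ whose fundamental solutions are $e^{\pm\mu|x|}$; a variation-of-constants/Gronwall argument then upgrades the above to the uniform bounds $|\Xi_1(x)|+|\Xi_1'(x)|\le C e^{-\mu|x|}$ and $|\Xi_2(x)|+|\Xi_2'(x)|\le C e^{\mu|x|}$ for all $x\in\mathbb{R}$, with $C$ depending only on $\lambda$, $\beta_0$, $k$ and $w_0$ (in particular, not on $\epsilon$).

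Next I would estimate the kernel and conclude. For $x,s\ge 0$ the kernel in \eqref{A10} is $K(x,s)=\Xi_1(x)\Xi_2(s)$ for $0\le s\le x$ and $K(x,s)=\Xi_2(x)\Xi_1(s)$ for $s\ge x$, extended evenly in $x$; the even extension is $C^1$ at $0$ because $\Xi_2'(0)=0$ makes the boundary term in $\frac{d}{dx}\mathcal{L}[f]$ vanish there. Multiplying the two exponential bounds gives $|K(x,s)|\le C e^{-\mu|x-s|}$, and differentiating \eqref{A10} (the boundary terms cancel by the Wronskian identity) gives $\frac{d}{dx}\mathcal{L}[f](x)=\Xi_1'(x)\int_0^x\Xi_2 f + \Xi_2'(x)\int_x^\infty\Xi_1 f$, so the kernel of $\frac{d}{dx}\mathcal{L}$ is likewise dominated by $C e^{-\mu|x-s|}$; in particular the improper integrals converge absolutely for $f\in L^2$. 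Since $e^{-\mu|\cdot|}\in L^1(\mathbb{R})$, Young's convolution inequality (equivalently Schur's test) yields $\|\mathcal{L}[f]\|_{L^2}+\|\frac{d}{dx}\mathcal{L}[f]\|_{L^2}\le \tilde C\|f\|_{L^2}\le\tilde C\|f\|_{H^1}$, and $\mathcal{L}[f]$ is even whenever $f$ is; hence $\mathcal{L}[f]\in B_1$ and $\|\mathcal{L}[f]\|_{B_1}\le \tilde C\|f\|_{B_1}$ with $\tilde C$ independent of $\epsilon$. Alternatively, one may simply invoke the computation in \cite[Section 3]{SunChen}, to which this kernel is structurally identical.

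The step I expect to be the main obstacle is the uniform exponential control of $\Xi_2$ and $\Xi_2'$ by $e^{\mu|x|}$ — equivalently, the estimate $|K(x,s)|\le Ce^{-\mu|x-s|}$ with $C$ uniform. Everything else (evenness, the $C^1$ matching at $0$, convergence of the integrals, and the final $H^1$ bound via Young's inequality) is routine once that bound is in hand, and it is precisely the exponential decay of the KdV-profile potential $w_0^{k-1}$ that makes the uniform bound accessible.
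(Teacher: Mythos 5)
Your proposal is correct and is essentially the argument the paper intends: the paper gives no proof of Lemma \ref{SC} beyond citing \cite[Section 3]{SunChen}, and your explicit route --- uniform exponential bounds on $\Xi_1,\Xi_2$ forced by the Wronskian normalization and the exponentially localized potential $w_0^{k-1}$, the resulting kernel bound $|K(x,s)|\le Ce^{-\mu|x-s|}$ (and the same for the differentiated kernel after the boundary terms cancel), and Young's/Schur's inequality --- is precisely the computation being referenced, with the key point correctly identified that all objects are $\epsilon$-independent so the constant is too. The checks you supply (the simple zero of $\Xi_1$ at the origin compensated by $\Xi_2(0)=-1/\Xi_1'(0)$, and $\Xi_2'(0)=0$ ensuring the $C^1$ even extension) are exactly the details needed to make the cited argument go through here.
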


Now, apply Lemma \ref{SC} to \eqref{A10} together with the uniform boundedness of $w$ in $H^2(\mathbb{R})$ and $\ep^{\frac 1 {p+1}} w'''$ in $L^2(\mathbb{R})$, and the properties of $f_2$ to obtain that if $\hat w\in B_1$,
\begin{align*}
{\mathcal{L}}[f_2 ](x) \in B_1\quad\text{and}\quad \left\| {\mathcal{L}}[f_2 ](x)\right\|_{B_1} \leq C_0(\epsilon)  \left\| \hat w \right\|_{B_1}  + C_1\, ,
\end{align*}
where for small $\ep > 0$, $C_1 > 0 $ is a constant and $C_0 (\ep )\to 0$ as $\ep \to 0$.
Finally, for $s \geq 2 C_1 > 0$ large, consider a closed convex subset of $B_1$ given by
$$
\mathcal{S}_{s}=\left\{\hat w \in B_1 : \|\hat w\|_{B_1} \leq s \right\}.
$$
Then if $\hat w\in \mathcal{S}_s$, we can let $\ep$ small enough such that $C_0 (\ep ) s < C_1$, which implies that ${\mathcal{L}}[f_2 ](x)$ maps $\mathcal{S}_{s}$ to $\mathcal{S}_{s}$. If we let $f^{(j)} _2 (x) $ be the corresponding
$f_2(x) $ for $\hat w^{(j)} (x)\in B_1$, since $\hat w$ is linear in $f_2(x)$, it is straightforward to see that from Lemma \ref{SC} again, we have
$$
\left\| {\mathcal{L}}\left[ f_2^{(1)}\right](x) - {\mathcal{L}}\left[f_2^{(2)}\right](x)\right\|_{B_1} \leq C _0(\ep ) \left\|\hat w^{(1)}(x)-\hat w^{(2)}(x)\right\|_{B_1}\, .
$$
Hence, for small $\ep >0$, it is deduced that ${\mathcal{L}}[f_2 ](x)$ is a contraction for $\hat w \in \mathcal{S}_{s}$ and the contraction mapping principle implies that
$\hat w $ is the only fixed point of ${\mathcal{L}}[f_2 ](x)$ in $\mathcal{S}_{s}$. Therefore,  $\hat w$ in \eqref{A3aa} satisfies that for small $\ep > 0$, $\| \hat w \|_{H^1(\mathbb{R})} \leq s$
where $s$ is independent $\ep$. Since $\hat w = \ep^{\frac{p-1}{p+1}} w_\ep $ and the relation between $\xi, z $ and $w$ is given in \eqref{trav-epa} and \eqref{xi}, it is obtained that $\ep^{\frac{p-1}{p+1}}(z_\ep,  w_\ep ) $ is
uniformly bounded in $H^1(\mathbb{R})$ with respect to small $\ep > 0$, showing Lemma \ref{derivative}.
\end{proof}

\section{The proof of Claim I in the proof of Lemma \ref{limite}}\label{B}

This appendix gives the proof of Claim I, which is stated in the middle of the proof for Lemma \ref{limite}. Here, the concentration-compactness argument from Theorem \ref{lions} can be applied to generate a convergence subsequence in $H^1 (\mathbb{R}) \times H^1 (\mathbb{R})$ and we mainly use the argument for a system in \cite[Section 3.1]{DPDE2013}.

First, let us state the properties of $\left (z^{\ep_j} , w^{\ep_j} \right )$. It is known that
there is a sequence $\{\ep_j\}\rightarrow 0^+ $ such that
$$
\lim_{j \rightarrow\infty } {\mathcal{I}}^{\epsilon_j} = \lim_{j \rightarrow\infty } I^{\ep_j} \left (z^{\ep_j} , w^{\ep_j} \right ) = I_c = \liminf_{\ep \rightarrow 0^+} {\mathcal{I}}^\epsilon \leq {\mathcal{J}}^0 \quad \mbox{with} \quad \ G\left (z^{\ep_j} , w^{\ep_j} \right ) = -1,
$$
where $I^{\ep} \left (z , w \right )$ is defined in \eqref{I-ep}, ${\mathcal{I}}^{\epsilon}$ is the infimum of $I^{\ep} \left (z , w \right )$ under the condition $G\left (z , w \right ) = -1$, and ${\mathcal{J}}^0$ is finite. Hence,
$\left (z^{\ep_j} , w^{\ep_j} \right ), j = 1, 2 , \dots$ are minimizers of $I^{\ep_j} (z , w)$ with $G\left (z , w \right ) = -1$. Now, we apply the concentration-compactness argument (see Theorem \ref{lions}) to this sequence $\left (z^{\ep_j} , w^{\ep_j} \right )$. Since $I^{\ep} \left (z , w \right )$ is non-negative and the limit of $ I^{\ep_j} \left (z^{\ep_j} , w^{\ep_j}\right )$ as $j\rightarrow \infty$ exists, Theorem \ref{lions} can be applied. Here, the positive measure $\{ \nu_j \}$ is defined by $d \nu_j = \rho_j dx$,  where $\rho_j $ is given by
\begin{align*}
\rho_j = & {\ep_j}^{-\frac2{p+1}}\left (z^{\ep_j}-\omega({\ep_j}) w^{\ep_j}\right )^2+ \left ( w^{\ep_j}\right )^2  \\
&\quad + |c|\left(\left (z^{\ep_j}\right ) '  -  \frac{b\omega({\ep_j})}{|c|} \left ( w^{\ep_j}\right ) '\right)^2 +\left(\frac{ac-b^2 \omega^2({\ep_j})}{|c|}\right) \left (\left ( w^{\ep_j}\right )'\right )^2\, ,
\end{align*}
which is the integrand of $I^{\ep_j} \left (z^{\ep_j} , w^{\ep_j} \right )$.

\medskip

\noindent{\bf  i. Vanishing:}
\smallskip

This case can be easily ruled out. If ``vanishing" happened, then $G\left (z^{\ep_j} , w^{\ep_j} \right )$ would approach to zero as $j \rightarrow \infty$, which contradicts to  $G\left (z^{\ep_j} , w^{\ep_j} \right ) = -1$. A detailed proof was given in \cite[Lemma 3.2]{DPDE2013} for an almost identical argument.

\medskip

\noindent{\bf  ii. Dichotomy:}

\smallskip
To rule out ``dichotomy", we follow the usual steps if ``dichotomy" happens. Following the ideas in  the proof of \cite[Lemma 3.4]{DPDE2013}, we let a fixed function $\phi (x) \in C^\infty_0 (\R ) $ such that $\operatorname{supp}( \phi) \subset [-2, 2] $ and $\phi \equiv 1$ in $[-1, 1]$. From the assumptions of ``dichotomy" with $0 < \theta < I_c$, we can choose sequences $\gamma_j \to 0, R_j \to \infty$ such that
$$
\operatorname{supp} \, (\nu_{j}^1) \subset B_{R_j } ( x_j ) \, ,\quad \operatorname{supp} \, (\nu_{j}^2 ) \subset \R \setminus B_{2R_j } ( x_j ) \, ,
$$
and
$$
\limsup_{j \to \infty} \left ( \left | \theta - \int_\R d \nu_j ^1 \right | + \left | (I_c - \theta)  - \int_\R d \nu_j ^2 \right | \right ) = 0\, ,
$$
which implies that
$$
\limsup_{j \to \infty} \left (  \int_{ B_{2R_j } ( x_j ) \setminus  B_{R_j } ( x_j ) }  d \nu_j  \right ) = 0\, .
$$
Based on those properties,  if we let $\phi_j  (x) = \phi ( ( x-x_j ) /R_j) $, then we can establish a splitting for the sequence $\left (z^{\ep_j} , w^{\ep_j} \right )$ by
$$
\left (z^{\ep_j} , w^{\ep_j} \right ) = \left (z^{\ep_j}_1 , w^{\ep_j}_1 \right ) + \left (z^{\ep_j}_2 , w^{\ep_j}_2 \right )
$$
with
$$
 \left (z^{\ep_j}_1 , w^{\ep_j}_1 \right )  = \left (z^{\ep_j} , w^{\ep_j} \right ) \phi_j,\, ,\quad  \left (z^{\ep_j}_2 , w^{\ep_j}_2 \right )= \left (z^{\ep_j} , w^{\ep_j} \right )( 1 - \phi_j)\, ,
$$
and show that as $j \to \infty$,
\begin{align*}
%& \left (z^{\ep_j} , w^{\ep_j} \right ) = \left (z^{\ep_j}_1 , w^{\ep_j}_1 \right ) + \left (z^{\ep_j}_2 , w^{\ep_j}_2 \right ) \, ,\\
& I^{\ep_j} \left (z^{\ep_j} , w^{\ep_j} \right )  = I^{\ep_j} \left (z^{\ep_j}_1 , w^{\ep_j}_1 \right )+  I^{\ep_j} \left (z^{\ep_j}_2 , w^{\ep_j}_2 \right ) + o (1)\, ,\\
& G\left (z^{\ep_j} , w^{\ep_j} \right ) = G\left (z^{\ep_j}_1 , w^{\ep_j}_1 \right ) + G\left (z^{\ep_j}_2 , w^{\ep_j}_2 \right ) + o(1)\, .
\end{align*}
The proof of the splitting properties is referenced to the same proof in \cite[Lemma 3.3]{DPDE2013}.  Then, by a same proof as that in \cite[Lemma 3.4]{DPDE2013}, it is obtained that
\begin{align*}
%& \limsup_{ j \to \infty} \left ( \left | I^{\ep_j}\left (z^{\ep_j}_1 , w^{\ep_j}_1 \right ) - \theta \right |  + \left | I^{\ep_j}\left (z^{\ep_j}_2 , w^{\ep_j}_2 \right ) - (I_c - \theta ) \right |\right ) = 0 \, ,\\
& \lim_{j \to \infty} \left ( I^{\ep_j} \left (z^{\ep_j} , w^{\ep_j} \right )  - I^{\ep_j} \left (z^{\ep_j}_1 , w^{\ep_j}_1 \right )-  I^{\ep_j} \left (z^{\ep_j}_2 , w^{\ep_j}_2 \right ) \right ) = 0 \, ,\\
& \lim_{j \to \infty} \left ( G\left (z^{\ep_j} , w^{\ep_j} \right ) - G\left (z^{\ep_j}_1 , w^{\ep_j}_1 \right )  - G\left (z^{\ep_j}_2 , w^{\ep_j}_2 \right )\right ) = 0 \, .
\end{align*}
Let $\lambda_{{\ep_j}, i} = \left | G \left (z^{\ep_j}_i , w^{\ep_j}_i \right )\right | $ for $i = 1,2$. We show that $\lambda_i = \lim_{j \to \infty} \lambda_{{\ep_j}, i}  \not = 0$. If one is those limits is zero, without loss of generality, let
$\lambda_1 = 0 $, which implies that $\lambda_2 = 1$. Then, consider
$$
\left ( \tilde z^{\ep_j}_2 , \tilde w^{\ep_j}_2 \right ) = \lambda_{{\ep_j}, 2} ^{ - \frac1 {p+2} }\left (z^{\ep_j}_2 , w^{\ep_j}_2 \right )\, ,
$$
so that $G ( \left ( \tilde z^{\ep_j}_2 , \tilde w^{\ep_j}_2 \right ) = -1$.  By the construction of $\left (z^{\ep_j}_1 , w^{\ep_j}_1 \right )$, it is deduced that
\begin{align*}
I_c = & \lim_{j \to \infty} \left ( I^{\ep_j} \left (z^{\ep_j}_1 , w^{\ep_j}_1 \right ) +  I^{\ep_j} \left (z^{\ep_j}_2 , w^{\ep_j}_2 \right ) \right ) \\
\geq  &\lim_{j \to \infty} \left ( \int_{B_{R_j} ( x_j )} d \nu_j  + \lambda_{{\ep_j}, 2} ^{\frac 2 {p+2} }  I^{\ep_j} \left (\tilde z^{\ep_j}_2 , \tilde w^{\ep_j}_2 \right )  \right ) \\
\geq &\lim_{j \to \infty} \left ( \int_{\R } d \nu_j^1  + \lambda_{{\ep_j}, 2} ^{\frac 2 {p+2} }  I^{\ep_j} \left (z^{\ep_j} , w^{\ep_j} \right )  \right ) = \theta + I_c,
\end{align*}
where the fact that $I^{\ep_j} \left (z^{\ep_j} , w^{\ep_j} \right )$ is the minimum of $I^{\ep_j} \left (z , w  \right )$ with $G ( z, w) = -1$ has been used. Since $\theta > 0$, the above inequality gives a contradiction.
Thus, $\lambda_i \not =0 $ for $i = 1,2$. Hence, we can define
$$
\left ( \tilde z^{\ep_j}_i , \tilde w^{\ep_j}_i \right ) = \lambda_{{\ep_j}, i} ^{ - \frac1 {p+2} }\left (z^{\ep_j}_i , w^{\ep_j}_i \right )\qquad \mbox{for }\   i = 1,2\, .
$$
which gives $G ( \left ( \tilde z^{\ep_j}_i , \tilde w^{\ep_j}_i \right ) = -1$ (here we note that $G  \left ( \tilde z^{\ep_j}_i , \tilde w^{\ep_j}_i \right ) = \pm 1$. Since $G \left (  z^{\ep_j} ,  w^{\ep_j} \right ) = -1$,
then for $j $ large, $G  \left (  z^{\ep_j}_i ,  w^{\ep_j}_i \right ) $ must be nonzero and negative). Moreover,
\begin{align*}
I_c = & \lim_{j \to \infty} \left ( I^{\ep_j} \left (z^{\ep_j}_1 , w^{\ep_j}_1 \right ) +  I^{\ep_j} \left (z^{\ep_j}_2 , w^{\ep_j}_2 \right ) \right ) \\
= &\lim_{j \to \infty} \left (\lambda_{{\ep_j}, 1} ^{\frac 2 {p+2} }  I^{\ep_j} \left (z^{\ep_j}_1 , w^{\ep_j}_1 \right ) + \lambda_{{\ep_j}, 2} ^{\frac 2 {p+2} }  I^{\ep_j} \left (\tilde z^{\ep_j}_2 , \tilde w^{\ep_j}_2 \right )  \right ) \\
\geq &\lim_{j \to \infty} \left (\lambda_{{\ep_j}, 1} ^{\frac 2 {p+2} }  I^{\ep_j} \left (z^{\ep_j} , w^{\ep_j} \right )  + \lambda_{{\ep_j}, 2} ^{\frac 2 {p+2} }  I^{\ep_j} \left (z^{\ep_j} , w^{\ep_j} \right )  \right )  \\
\geq & \left (\lambda_{ 1} ^{\frac 2 {p+2} }    + \lambda_{ 2} ^{\frac 2 {p+2} }   \right ) I_c
\end{align*}
where, again, the fact that $I^{\ep_j} \left (z^{\ep_j} , w^{\ep_j} \right )$ is the minimum of $I^{\ep_j} \left (z , w  \right )$ with $G ( z, w) = -1$ has been used.
Hence, $1 \geq \left (\lambda_{ 1} ^{\frac 2 {p+2} }    + \lambda_{ 2} ^{\frac 2 {p+2} }   \right )$ with $\lambda_{ i} > 0, j=1,2,$ and $\lambda_1 + \lambda_2 = 1$, which contradicts to the strictly concave property of the function $\lambda^{\frac 2 {p+2} }$ for $p \geq 1$. Therefore, ``dichotomy" is ruled out.

\medskip

\noindent{\bf iii. Compactness:}
\smallskip

Finally, by Theorem \ref{lions}, only ``compactness" holds. Then, in the following, we show that there is a subsequence of $\left (z^{\ep_j} , w^{\ep_j} \right )$ (which we still denote the same), a sequence of points $\{ x_j \} \in \mathbb{R}$, and  $ (z_0, w_0 )\in H^1( \R ) \times H^1( \R )$,  such that the translated functions
$$
\left (\tilde z^{\ep_j} , \tilde w^{\ep_j} \right ) = \left (z^{\ep_j} (\cdot + x_j ) , w^{\ep_j}(\cdot + x_j ) \right )
$$
converge to $ (z_0, w_0 )$ strongly in $ H^1 ( \R )\times H^1( \R )$. The proof is similar to the proof of \cite[Theorem 3.3]{DPDE2013} with some modifications.

It is known that
$$
\lim_{j \rightarrow\infty } I^{\ep_j} \left (z^{\ep_j} , w^{\ep_j} \right ) = I_c \qquad \mbox{and}\qquad G\left (z^{\ep_j} , w^{\ep_j} \right ) =-1 \, .
$$
``Compactness" implies that there is a sequence $\{ x_j \} \in \mathbb{R}$ such that for a given $\gamma  > 0$, there exists an $R > 0$ satisfying
$$
\int_{B_R( x_j )} d \nu_j \geq I_c - \gamma  \quad \mbox{for all} \quad j = 1, 2, \dots .
$$
Define
$$
\tilde \rho_j (x) = \rho_j (x+x_j )\, ,\qquad \left (\tilde z^{\ep_j} (x) , \tilde w^{\ep_j} (x) \right ) = \left (z^{\ep_j} (x + x_j ) , w^{\ep_j}(x + x_j ) \right )\, ,
$$
which have the same properties as $\left (z^{\ep_j}  , w^{\ep_j} \right )$ with
$$
\int_{B_R( 0 )} \tilde \rho_j (x)  dx = \int_{B_R( x_j )} d \nu_j \geq I_c - \gamma  \quad \mbox{for all} \quad j = 1, 2, \dots \, ,
$$
or
\begin{align}
\int_{\mathbb{R} \setminus B_R( 0 )} \tilde \rho_j (x)  dx = \int_{\mathbb{R} \setminus B_R( x_j )} d \nu_j \leq 2 \gamma  \quad \mbox{for all} \quad j = 1, 2, \dots \, . \label{B1}
\end{align}
Since $\left (\tilde z^{\ep_j} (x) , \tilde w^{\ep_j} (x) \right )$ is uniformly bounded in $H^1( \R )\times H^1( \R )$, Sobolev imbedding  theorem shows that there is a subsequence
of $\left (\tilde z^{\ep_j} (x) , \tilde w^{\ep_j} (x) \right )$ (denoted by the same notations) and $(z_0, w_0) \in H^1( \R ) \times H^1( \R )$ such that  as $j \to \infty$,
\begin{align*}
& \left (\tilde z^{\ep_j}  , \tilde w^{\ep_j}  \right ) \rightharpoonup (z_0, w_0) \quad \mbox{in}\quad H^1( \R ) \times H^1( \R ) \quad\mbox{ and } \quad L^2 ( \R ) \times L^2( \R )\, ,\\
& \left (\tilde z^{\ep_j} , \tilde w^{\ep_j}  \right ) \rightarrow  (z_0, w_0) \quad \mbox{in} \quad L^2_{loc} ( \R )\times L^2_{loc}( \R )\, ,\\
& \left (\tilde z^{\ep_j}  , \tilde w^{\ep_j}  \right ) \rightarrow (z_0, w_0) \quad \mbox{a.e.\  \   \   in}\quad \mathbb{R}^2 \, .
\end{align*}
Then, it is deduced from \eqref{B1} that
\begin{align*}
\int_{\mathbb{R} } |z_0 (x) |^2  dx \leq & \liminf _{j \to \infty} \int_{\mathbb{R} } |\tilde z^{\ep_j} (x) |^2  dx  \\
\leq & \liminf _{j \to \infty} \int_{B_R(0) } |\tilde z^{\ep_j} (x) |^2  dx + C \gamma  \\
= &  \int_{B_R(0) } | z_0 (x) |^2  dx + C \gamma \leq \int_{\mathbb{R} } | z_0 (x) |^2  dx + C \gamma\, ,
\end{align*}
where $C > 0$ is a fixed constant which may depend on the constants in $I^{\ep_j} \left (z , w \right )$, but independent of $\gamma$. Hence,
$$
\liminf _{j \to \infty} \int_{\mathbb{R} } |\tilde z^{\ep_j} (x) |^2  dx = \int_{\mathbb{R} } |z_0 (x) |^2  dx
$$
By the weak convergence of $\tilde z^{\ep_j}$ to $z_0 $ in $L^2( \R )$, there is a subsequence of $\tilde z^{\ep_j}$ (still denoted same) such that $\tilde z^{\ep_j}\rightarrow z_0 $ strongly in $L^2( \R )$. Similar argument works for $\tilde w^{\ep_j}\to w_0 $ strongly in $L^2( \R )$. Hence, the uniform boundedness of $I^{\ep_j} \left (z^{\ep_j} , w^{\ep_j} \right )$ yields that $\tilde z^{\ep_j} - \tilde w^{\ep_j}\to 0$ in $L^2( \R )$ and $w_0 = z_0$.
Then, the Sobolev embedding theorem implies that
$$
G (z_0, w_0 ) = \lim_{j \to \infty} G \left (\tilde z^{\ep_j}  , \tilde w^{\ep_j}  \right )  = -1 \, ,
$$
which gives
$$
\lim_{j\to \infty}  I^{\ep_j} \left (z_0 , w_0 \right ) \geq  \lim_{j\to \infty}  I^{\ep_j} \left (z^{\ep_j} , w^{\ep_j} \right ) = I_c\, .
$$
Moreover, for $\ep > 0$ small, the weak convergence of $\left (\tilde z^{\ep_j}  , \tilde w^{\ep_j}  \right )$ to  $(z_0, w_0)$ in $H^1( \R )\times H^1( \R )$ yields that if we denote
\begin{align*}
II^{\ep}(z, w)= &\int_{\mathbb R}\left((z-\omega(\ep) w)^2+(1-\omega^2(\ep))\epsilon^{-\frac2{p+1}}w^2 \right)dy \\
&+\int_{\mathbb R}\left(|c|\left(z'  -  \frac{b\omega(\ep)}{|c|} w'\right)^2 +\left(\frac{ac-b^2 \omega^2(\epsilon)}{|c|}\right) (w')^2\right)\,dy\, ,
\end{align*}
then
\begin{align*}
0  & \leq \lim_{j\to \infty} II^{\ep_j }\left (\tilde z^{\ep_j} -z_0  , \tilde w^{\ep_j}-w_0  \right )
 = \lim_{j\to \infty} \left ( II^{\ep_j }\left (\tilde z^{\ep_j}   , \tilde w^{\ep_j}  \right ) - II^{\ep_j }\left (z_0  , w_0  \right ) \right )\\
&\leq \lim_{j\to \infty} \left ( I^{\ep_j }\left (\tilde z^{\ep_j}   , \tilde w^{\ep_j}  \right ) - II^{\ep_j }\left (z_0  , w_0  \right )\right )  = I_c - \lim_{j\to \infty} I^{\ep_j }\left (z_0  , w_0  \right ) \leq 0
\end{align*}
where the facts that $z_0 = w_0$ in $L^2( \R )$ and $ 1 - \omega(\ep ) =O (\ep^{\frac 2{p+2}})$ have been used. Therefore,
$$\lim_{j\to \infty} II^{\ep_j }\left (\tilde z^{\ep_j} -z_0  , \tilde w^{\ep_j}-w_0  \right ) = 0 \, ,$$
which, together with $L^2$-convergence of $\left (\tilde z^{\ep_j}   , \tilde w^{\ep_j}  \right )$ to $\left (z_0  , w_0  \right ) $, yields that $\left (\tilde z^{\ep_j}   , \tilde w^{\ep_j}  \right )\to \left (z_0  , w_0  \right ) $ in $H^1( \R )\times H^1( \R )$. The claim is proved. $\qquad \Box$

\section{Global existence of \eqref{1bbl} and  proof of Lemma \ref{prepare}}\label{C}

In this appendix, we will briefly give the proofs of global existence and uniqueness of \eqref{1bbl} and Lemma \ref{prepare}.

\subsection{Global well-posedness of \eqref{1bbl}} We note that for $p = 1$, the local well-posedness of \eqref{1bbl} are given in  \cite[Theorem 2.5]{BCS04} and the global existence is provided in \cite[Theorem 4.2]{BCS04} using the Hamiltonian structure of \eqref{1bbl}. For general $p \geq 1$, we will give a very brief account on the proof of the local and global results. 

For local well-posedness of \eqref{1bbl}, the same procedure of the proof in \cite[Theorem 4.2]{BCS04} will be followed. Using the same notations as those in \cite{BCS04}, \eqref{1bbl} is equivalent to 
$$
\begin{pmatrix} v \\w \end{pmatrix} = S(t) \begin{pmatrix} v_0 \\w_0 \end{pmatrix} \int^t_0 S(t-s) \mathcal{F} \begin{pmatrix} v \\w \end{pmatrix} ds\, ,
$$
where only difference between the terms in the system of \cite{BCS04} and \eqref{1bbl} is the function $\mathcal{F}$, which is
$$
\mathcal{F} \begin{pmatrix} v \\w \end{pmatrix} = - \mathcal{P}^{-1} 
\begin{pmatrix} (I- b \partial _x^2 )^{-1} \partial_x \left [ ( v - w )^p \mathcal{H} (v+ w) \right ]  \\ (I- b \partial _x^2 )^{-1} 
\left ( \partial _x (v- w)^{p+1}/(p+1) \right )    \end{pmatrix}\, .
$$
Here, $\mathcal{P}^{-1}, \mathcal{H}$ are bounded operators in $H^s (\mathbb{R}) \times  H^s (\mathbb{R}) $ or $H^s (\mathbb{R}) $ with $s \geq 0$. Moreover, it is straightforward to check that 
$$
\mathcal{F} \begin{pmatrix} 0 \\ 0 \end{pmatrix} =\begin{pmatrix} 0 \\0 \end{pmatrix}
$$ and
$$
\left \| \mathcal{F} \begin{pmatrix} f_1 \\ g_1\end{pmatrix} -\mathcal{F} \begin{pmatrix} f_2 \\ g_2\end{pmatrix} \right \|_{H^s(\mathbb{R})\times H^s(\mathbb{R})}\leq C R^p\| (f_1, g_1) - ( f_2, g_2) \|_{H^s(\mathbb{R})\times H^s(\mathbb{R})}
$$
where $ p \geq 1$, $s > 1/2$ and $(f_1, g_1), (f_2, g_2)$ are selected from a closed ball of radius $R$ centered at $0$ in the corresponding space. Then, the rest of the local well-posedness proof of \eqref{1bbl} follows exactly the same way as the proof of \cite[Theorem 2.5]{BCS04} using the contraction mapping theorem. Hence, we can conclude that 
for any constant $\delta_0 > 0$, there is a real constant $T_0 > 0$  depending on $\delta_0$  such that if $\| (\eta_0 , u _0)\|_{H^1(\mathbb{R})\times H^1(\mathbb{R})} \leq \delta_0$, then the solution of \eqref{1bbl} exists for $ t \in [ 0 , T_0]$ satisfying that
$$
\| (\eta (t, \cdot) , u (t, \cdot) )\|_{H^1(\mathbb{R})\times H^1(\mathbb{R})} \leq 2 \delta_0
$$
for all $t \in [ 0 , T_0]$.

For the global existence result of  \eqref{1bbl}, we can use the Hamiltonian structure of the system together with the extension of the existence interval when initial conditions are small. In particular, by the Sobolev imbedding theorem, given any solution of \eqref{1bbl} with $t \in [ 0, t_1]$, there are two fixed positive constants $c_0, c_1$ independent of the solution such that for any $t \in [ 0, t_1]$, 
$$
c_0 \| (\eta , u )\|_{H^1(\mathbb{R})\times H^1(\mathbb{R})} ^2\left ( 1 - c_1
\| (\eta , u )\|_{H^1(\mathbb{R})\times H^1(\mathbb{R})} ^p\right ) \leq \mathcal{H} \begin{pmatrix}\eta  \\ u \end{pmatrix}= \mathcal{H} \begin{pmatrix}\eta_0 \\ u_0 \end{pmatrix}\, .
$$
Therefore, if $1 - c_1
\| (\eta , u )\|_{H^1(\mathbb{R})\times H^1(\mathbb{R})} ^p \geq 1/2$ or
$\| (\eta , u )\|_{H^1(\mathbb{R})\times H^1(\mathbb{R})} \leq (1/(2c_1))^{1/p}$, 
then
$$
\| (\eta , u )\|_{H^1(\mathbb{R})\times H^1(\mathbb{R})}  \leq \left ( \frac 2{c_0}\mathcal{H} \begin{pmatrix}\eta_0 \\ u_0 \end{pmatrix} \right )^{1/2} \qquad \mbox{with  }\quad \mathcal{H} \begin{pmatrix}\eta_0 \\ u_0 \end{pmatrix} \geq 0 \, .
$$
From the local well-posedness result stated above, we select $\delta_0> 0$ with $2\delta_0  \leq (1/(2c_1))^{1/p} $ and let the initial condition satisfying 
$$
\| (\eta_0 , u _0)\|_{H^1(\mathbb{R})\times H^1(\mathbb{R})} \leq \delta_0
\quad \mbox{and}\quad \left ( \frac 2{c_0}\mathcal{H} \begin{pmatrix}\eta_0 \\ u_0 \end{pmatrix} \right )^{1/2} \leq \delta_0\, .
$$
Then, the solution of \eqref{1bbl} exists for $t\in [0, T_0]$ and satisfies
$
\| (\eta , u )\|_{H^1(\mathbb{R})\times H^1(\mathbb{R})}  \leq \delta_0\, 
$ for all $t \in [0, T_0]$. Hence, from the local well-posedness result again, we can extend the solution to $[0, 2T_0]$. Continuing this procedure yields a global solution for $t \in [0, \infty)$, which is unique and satisfies $
\| (\eta , u )\|_{H^1(\mathbb{R})\times H^1(\mathbb{R})}  \leq \delta_0\, 
$ for all $t \in [0, \infty)$.

\subsection{The proof of Lemma \ref{prepare}}
 In this subsection, we will provide a brief idea of the proof of Lemma \ref{prepare}.
 
 Let $(\psi_{\omega_0}, v_{\omega_0})\in \mathcal{G}_{\omega_0}$ and $\omega$ be close to $\omega_0$. We need to show that there is a minimizer $(\psi_{\omega}, v_{\omega}) $ near $(\psi_{\omega_0}, v_{\omega_0})$ satisfying the required conditions. First write 
 $$
 (\psi_{\omega}, v_{\omega}) = (\psi_{\omega_0}, v_{\omega_0}) + (\psi, v) \quad \mbox{and consider }\    (\psi_{\omega_0} + \psi , v_{\omega_0}    + v) + t ( \tilde \psi , \tilde v) 
 $$ 
 where $t$ is small and $( \tilde \psi , \tilde v)$ is arbitrary.  Then, 
 \begin{align*}
 G( & \psi_{\omega_0} + \psi + t  \tilde \psi ,  v_{\omega_0}    + v+ t \tilde v )  
 = \frac 2{p+1} \bigg ( \int_\mathbb{R} \psi_{\omega_0} v^{p+1}_{\omega_0} dx \\
& + \int_\mathbb{R} \psi_{\omega_0} \left ( ( v_{\omega_0}    + v+ t \tilde v )^{p+1}  - v^{p+1}_{\omega_0} \right ) dx +  \int_\mathbb{R} ( \psi + t\tilde \psi ) ( v_{\omega_0}    + v+ t \tilde v )^{p+1} dx \bigg ) \\
=& -1 + \frac 2{p+1}  \left ( \int_\mathbb{R} \psi_{\omega_0} \left ( ( v_{\omega_0}    + v+ t \tilde v )^{p+1}  - v^{p+1}_{\omega_0} \right ) dx +  \int_\mathbb{R} ( \psi + t\tilde \psi ) ( v_{\omega_0}    + v+ t \tilde v )^{p+1} dx \right ) \\
=& - ( 1 -  g) \, .
\end{align*}
Hence, to find a minimizer near $(\psi_{\omega_0}, v_{\omega_0)}$ for \eqref{mp} locally, we need to consider the function 
$$
\frac1{(1-g)^{\frac1{p+2}}}( \psi_{\omega_0} + \psi + t  \tilde \psi ,  v_{\omega_0}    + v+ t \tilde v )  \, 
$$
 and make the minimum of 
\begin{align*}
 I_\omega (t)= & \frac1{(1-g)^{\frac2{p+2}}}\bigg [ \int_\mathbb{R}\Big ( ( \psi_{\omega_0} + \psi + t  \tilde \psi )^2 - c ( \psi'_{\omega_0} + \psi '+ t  \tilde \psi ' )^2 \\
 & \qquad\qquad\qquad + ( v_{\omega_0}    + v+ t \tilde v )^2 
  - a (v'_{\omega_0}    + v'+ t \tilde v' )^2  \Big )dx \\
  & - 2 \omega \int_\mathbb{R}\Big ( ( \psi_{\omega_0} + \psi + t  \tilde \psi ) ( v_{\omega_0}    + v+ t \tilde v ) + b ( \psi'_{\omega_0} + \psi '+ t  \tilde \psi ' ) (v'_{\omega_0}    + v'+ t \tilde v' ) \Big ) dx \bigg ]\, .
\end{align*}
 Thus,  we can take the $t$-derivative of $I_\omega (t)$ and let $t=0$ to get variational equations for $(\psi , v)$. Since $(\tilde \psi, \tilde v)$ is arbitrary in $X$, we can obtain a system of $(\psi_{\omega_0} + \psi  ,  v_{\omega_0}    + v)$, which is an unscaled version of \eqref{trav-ep}, i.e.,
 \begin{equation} 
 \left \{ 
 \begin{array}{rl}
  (\psi_{\omega_0} + \psi ) + c (\psi_{\omega_0} + \psi )'' - \omega ( v_{\omega_0} + v ) + \omega b ( v_{\omega_0} + v ) ''   \   \\ \\
 +  \frac2{(p+2)(p+1) ( 1- g )|_{t =0} } ( v_{\omega_0} + v ) ^{p+1}  = 0 \, , \\ \\
  ( v_{\omega_0} + v ) +  ( v_{\omega_0} + v )'' - \omega (\psi_{\omega_0} + \psi ) + a (\psi_{\omega_0} + \psi )''    \   \\ \\
   + \frac2{(p+2) ( 1- g)|_{t=0} }    (\psi_{\omega_0} + \psi )  ( v_{\omega_0} + v ) ^{p}  =0 \, . \end{array} 
  \right . \label{ayyyyy}
\end{equation}
 It is known that when $\omega -\omega_0=0$, \eqref{ayyyyy} has a solution $(\psi , v) = (0, 0)$. Then, we use a similar idea in Appendix \ref{A} and linearize \eqref{ayyyyy} around $(\psi_{\omega_0},  v_{\omega_0})$ to obtain a unique solution $(\psi , v) $ when $\omega -\omega_0 $ is small. The detailed argument can be found in Appendix \ref{A}.  Since the solution of \eqref{ayyyyy} near zero is unique and $I_\omega$ has a minimum when $\omega $ is near $\omega_0$, such a solution of \eqref{ayyyyy} must be a minimizer of \eqref{mp}, at least locally. Moreover, since the system \eqref{ayyyyy} depends on $\omega$ analytically, the solution $(\psi, v)  $ is continuously differentiable with respect to $\omega$ for small enough $\omega-\omega_0$.

\subsection*{Acknowledgment} The authors are grateful to the referees for the careful reading of this paper and their valuable suggestions and comments.
This work was done while the first author was visiting Virginia Tech. The author thanks the host institution for their warm hospitality.

\end{document}